\newtheorem{theorem}{Theorem}
\newtheorem{proposition}[theorem]{Proposition}
\newtheorem{lemma}[theorem]{Lemma}
\newtheorem{corollary}{Corollary}
\newtheorem{example}{Example}
\newtheorem{remark}{Remark}
\newtheorem{cor}{Corollary}
\newcommand{\bR}{\mathbb{R}}
\newcommand{\bC}{\mathbb{C}}
\newcommand{\ee}{\end{equation}}
\newcommand{\D}{\mathcal D}
\newcommand \Vor {\rm{\text{Vor}}}
\newcommand*\diff{\mathop{}\!\mathrm{d}}
\begin{document}
          \numberwithin{equation}{section}
           \numberwithin{theorem}{section}
          
              \title[P\'olya's method to construct Voronoi diagrams]
          {A refinement for rational functions of P\'olya's method to construct Voronoi diagrams}

 \author[R.~B\o gvad]{Rikard B\o gvad}
\address{Department of Mathematics, Stockholm University, SE-106 91
Stockholm,         Sweden}
\email {rikard@math.su.se }

\author[Ch.~H\"agg]{Christian H\"agg}
\address{Department of Mathematics, Stockholm University, SE-106 91
Stockholm,         Sweden}
\email{hagg@math.su.se}

\begin{abstract} Given a complex polynomial $P$ with zeroes $z_1,\dotsc,z_d$, we show that the asymptotic zero-counting measure of the iterated derivatives $Q^{(n)}, \ n=1,2,\dotsc$, where $Q=R/P$ is any irreducible rational function, converges to an explicitly constructed probability measure supported by the Voronoi diagram associated with $z_1,\dotsc,z_d$. This refines P\'olya's Shire theorem for these functions. In addition, we prove a similar result, using currents, for Voronoi diagrams associated with generic hyperplane configurations in $\bC^m$. 
\end{abstract}

\maketitle

\section{Introduction}

P\'olya's Shire theorem \cite{Ha, Po1,Po2, Whit} says that the zero sets $Z(f^{(n)})$ of the iterated derivatives $f^{(n)}$ of a meromorphic function $f$ with set of poles $S$ accumulate along (the boundaries of) the Voronoi diagram associated with $S$. Considering the many recent studies of weak limits of zero-set measures of polynomial sequences, it is tempting to use that circle of ideas in the situation of P\'olya's theorem. In this note we will do this for rational functions, where a measure-theoretic formulation is rather immediate, and the proof is straightforward.

If $Q=R/P$ (where we always assume that $\gcd(R,P)=1$) is a rational function, the {\it associated zero-counting measure} $\mu_{_Q}$ is defined as the discrete probability measure that assigns equal weight to all zeroes $a_1,a_2,\dotsc,a_n$ of $Q$ (counted with multiplicity). That is 
$$\mu_{_Q}=\frac{1}{n}\sum_{i=1}^n \delta_{a_i},
$$ where $\delta_{a_i}$ is the Dirac measure at $a_i$. Fix a rational function $Q=R/P$. The Voronoi diagram $\Vor_{_S}$ associated with the zeroes $S:=\{z_1,\dotsc,z_d\}$ of $P$, consists of (certain, see below) segments on the lines $\vert z-z_i\vert=\vert z-z_j\vert$, where $z_i,\,z_j$ are distinct zeroes of $P$.
Our main result is a description of the asymptotic limit of the zero-counting measures of the derivatives $Q_n := Q^{(n)}$.
Define a plane measure with support on the line $L_{ij}: \vert z-z_i\vert=\vert z-z_j\vert$ by
 $$\mu_{ij} := \frac{1}{2(d-1)\pi}\frac{\vert z_i - z_j\vert}{\vert (z-z_i)(z-z_j)\vert}\diff s,$$
 where $s$ is Euclidean length measure in the complex plane, and $z_i,\,z_j$ are distinct zeroes of $P$. Restricting the measure to the segment of $L_{ij}$ that is part of the Voronoi diagram, and summing over all lines gives a measure $\mu_{_S}$,
  supported on the Voronoi diagram. This will in fact be a probability measure canonically associated with the diagram.

\begin{theorem}\label{th:voronoi}
Given a rational function $Q=R/P$ where $P$ has degree  $d\ge 2$ and distinct zeroes $z_1,\dotsc,z_d$, \\
\noindent(i) the zero-counting measures $\mu_n$ of the sequence $\{Q_n\}_{n=1}^\infty$ converge to the probability measure $\mu_{_S}$.

\noindent(ii)  The logarithmic potential $L_{\mu_n}(z)$ of $\mu_n$ converges in $L^1_{loc}$ to the logarithmic potential of $\mu_{_S}$, which equals
$$\Psi(z): = (d-1)^{-1}(\log |\tilde P| + \smash{\displaystyle\max_{i=1,\dotsc,d}} \{\log |z-z_i|^{-1}\}),$$
where $\tilde P=\prod_{i=1}^d(z-z_i)$.
\end{theorem}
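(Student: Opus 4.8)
\emph{Proof sketch.} The plan is to pass through logarithmic potentials and the standard compactness properties of subharmonic functions. First I would reduce to a concrete polynomial sequence: partial fractions give $Q=(\text{polynomial part})+\sum_{i=1}^{d}c_i/(z-z_i)$ with residues $c_i=R(z_i)/P'(z_i)$, and $c_i\neq 0$ for every $i$ because $\gcd(R,P)=1$ forces $R(z_i)\neq 0$. For $n$ larger than the degree of the polynomial part, $Q^{(n)}=(-1)^n n!\sum_{i=1}^d c_i(z-z_i)^{-(n+1)}$; hence, writing $w_i(z):=\prod_{j\neq i}(z-z_j)=\tilde P(z)/(z-z_i)$, the zeros of $Q^{(n)}$ are exactly the zeros of the polynomial $g_n(z):=\sum_{i=1}^d c_i\,w_i(z)^{n+1}$ --- there is no cancellation against $\tilde P^{\,n+1}$ since $g_n(z_i)=c_i w_i(z_i)^{n+1}\neq 0$. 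The top coefficients of $g_n$ are fixed polynomials in $n$, so $\deg g_n=(d-1)(n+1)-k_\ast$ for all large $n$ (with $k_\ast\ge 0$ fixed) and the leading coefficient grows at most polynomially; thus $L_{\mu_n}=u_n-c_n$ where $u_n(z):=\tfrac{1}{\deg g_n}\log|g_n(z)|$ and $c_n\to 0$, and it suffices to analyze $u_n$.

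Next I would compute the pointwise limit. The key identity is $\Psi(z)=(d-1)^{-1}\max_i\log|w_i(z)|$, since $\log|\tilde P(z)|-\log|z-z_i|=\log|w_i(z)|$; in particular $\Psi$ is subharmonic (a maximum of the functions $\log|w_i|$), and $|w_i(z)|$ is uniquely largest exactly when $|z-z_i|$ is uniquely smallest, i.e.\ for $z\notin\Vor_{_S}$. For such $z$ the sum defining $g_n(z)$ has a single dominant term $c_{i^\ast}w_{i^\ast}(z)^{n+1}$ with $c_{i^\ast}\neq 0$, so $u_n(z)=\tfrac{n+1}{\deg g_n}\log|w_{i^\ast}(z)|+o(1)\to(d-1)^{-1}\log|w_{i^\ast}(z)|=\Psi(z)$; thus $u_n\to\Psi$ pointwise off the null set $\Vor_{_S}$. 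On any compact $K$, $|g_n(z)|\le\left(\sum_i|c_i|\right)\left(\max_{z\in K,\,i}|w_i(z)|\right)^{n+1}$, so the subharmonic functions $u_n$ are locally uniformly bounded above; since they converge a.e.\ to the finite function $\Psi$ they do not tend to $-\infty$, so by the compactness of subharmonic functions the sequence is relatively compact in $L^1_{loc}$ with subharmonic limit points, each of which must agree a.e.\ with $\Psi$. Hence $u_n\to\Psi$ in $L^1_{loc}$, which is part (ii), and taking distributional Laplacians yields $\mu_n\to\tfrac{1}{2\pi}\Delta\Psi$ in the sense of distributions.

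It remains to identify $\tfrac{1}{2\pi}\Delta\Psi$ with $\mu_{_S}$ and to see that $\mu_{_S}$ is a probability measure. Off $\Vor_{_S}$, $\Psi$ locally equals a single harmonic $(d-1)^{-1}\log|w_{i^\ast}|$; near a zero $z_j$ of $\tilde P$ the maximizing index is $j$ and $w_j(z_j)\neq 0$, so $\Psi$ is harmonic there and $\Delta\Psi$ carries no point masses. Hence $\Delta\Psi$ is supported on $\Vor_{_S}$, and on the Voronoi edge contained in $L_{ij}$ the jump formula for $\Delta\max(\log|w_i|,\log|w_j|)$ gives the singular density $|\nabla(\log|w_i|-\log|w_j|)|\,\diff s=|\nabla(\log|z-z_j|-\log|z-z_i|)|\,\diff s$; evaluating $\tfrac{z-z_j}{|z-z_j|^2}-\tfrac{z-z_i}{|z-z_i|^2}$ on $L_{ij}$, where $|z-z_i|=|z-z_j|$, gives $\tfrac{z_i-z_j}{|z-z_i||z-z_j|}$, a vector normal to $L_{ij}$ of length $\tfrac{|z_i-z_j|}{|z-z_i||z-z_j|}$. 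Dividing by $2\pi(d-1)$ reproduces exactly the density of $\mu_{ij}$, so $\tfrac{1}{2\pi}\Delta\Psi=\mu_{_S}$. Finally $\Psi(z)=\log|z|+O(1)$ as $|z|\to\infty$ (because $\max_i\log|w_i(z)|=(d-1)\log|z|+O(1)$), so Gauss's theorem gives $\int\diff\mu_{_S}=\lim_{R\to\infty}\tfrac{1}{2\pi}\oint_{|z|=R}\partial_r\Psi\,\diff s=1$. Being a probability measure, $\mu_{_S}$ absorbs the full mass of each $\mu_n$, so the distributional convergence of part (i) is in fact weak convergence of probability measures.

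The step I expect to be the main obstacle is the passage from pointwise-a.e.\ to $L^1_{loc}$ convergence of the potentials: one must rule out both a concentration of the zeros of $Q^{(n)}$ along $\Vor_{_S}$ incompatible with $L^1$-control and an escape of mass to infinity (a few zeros genuinely do run off to infinity, at modulus of order $n$, as the case $Q=1/(z(z-1))$ already shows). The compactness lemma for subharmonic functions handles the former once the uniform upper bound is in place, and the growth $\Psi\sim\log|z|$ together with the fixed normalization $\mu_n(\bC)=1$ handles the latter.
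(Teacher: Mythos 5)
Your proposal is correct in substance but takes a genuinely different route from the paper at the decisive analytic step. Since the theorem assumes $P$ has simple zeroes, your partial-fraction reduction to the explicit numerator $g_n=\sum_i c_iw_i^{n+1}$, $w_i=\tilde P/(z-z_i)$, is legitimate and replaces the paper's more general polar-part machinery (Lemma \ref{lemma:polarpart} and the degree/leading-coefficient Lemmas \ref{lemma:degrn} and \ref{lemma:highest coefficient}, which handle poles of arbitrary order by the same ``coefficients are polynomials in $n$'' device you use). For the passage from a.e.\ convergence to $L^1_{loc}$ convergence of the potentials, the paper argues directly: a growth bound on the zeroes of $R_n$ (Lemma \ref{lemma:growth}) followed by a hands-on splitting of the potential into far and near zeroes and integration of the logarithmic singularities. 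You instead invoke the compactness theorem for subharmonic functions, using the locally uniform upper bound $|g_n|\le\bigl(\sum_i|c_i|\bigr)\sup_{K}\max_i|w_i|^{n+1}$ and the a.e.\ limit $\Psi$ to rule out degeneration to $-\infty$ and to pin down every $L^1_{loc}$ limit point. This is a correct and shorter alternative --- the paper even remarks that a compactness argument would be available given an $L^1$ bound, whereas in fact only the upper bound is needed, exactly as you use it --- at the price of losing the explicit control on how the zeroes escape to infinity. Your identification of $\tfrac1{2\pi}\Delta\Psi$ with $\mu_{_S}$ via the jump formula for the maximum of two harmonic functions is the same computation the paper performs with Stokes' theorem in Proposition \ref{lemma:1}.

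Two places are thinner than the full claim and should be shored up. First, for the total mass, the asymptotics $\Psi(z)=\log|z|+O(1)$ alone does not control the flux $\oint_{|z|=R}\partial_r\Psi\,\diff s$; use the cellwise formula $2\,\partial\Psi/\partial z=(d-1)^{-1}\sum_{j\ne i}(z-z_j)^{-1}=\tfrac1z+O(|z|^{-2})$, together with a Stokes-type justification since $\Psi$ is only piecewise smooth and the Voronoi skeleton is unbounded --- this is exactly how $\mu_{_S}(\bC)=1$ is obtained in the proof of Proposition \ref{lemma:1}. Second, statement (ii) also asserts that $\Psi$ \emph{is} the logarithmic potential of $\mu_{_S}$; since $\mu_{_S}$ has unbounded support this requires checking that $\int\log|z-\zeta|\,\diff\mu_{_S}(\zeta)$ is a well-defined $L^1_{loc}$ function and that $L_{\mu_{_S}}(z)-\log|z|\to0$, after which $\Psi-L_{\mu_{_S}}$ is a bounded harmonic function and hence vanishes (this is the paper's Corollary \ref{cor:probmeasure}). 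Both points are routine given your asymptotics, but they belong to the statement being proved and should be included.
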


\begin{figure}
\begin{center}
\includegraphics[scale=0.50]{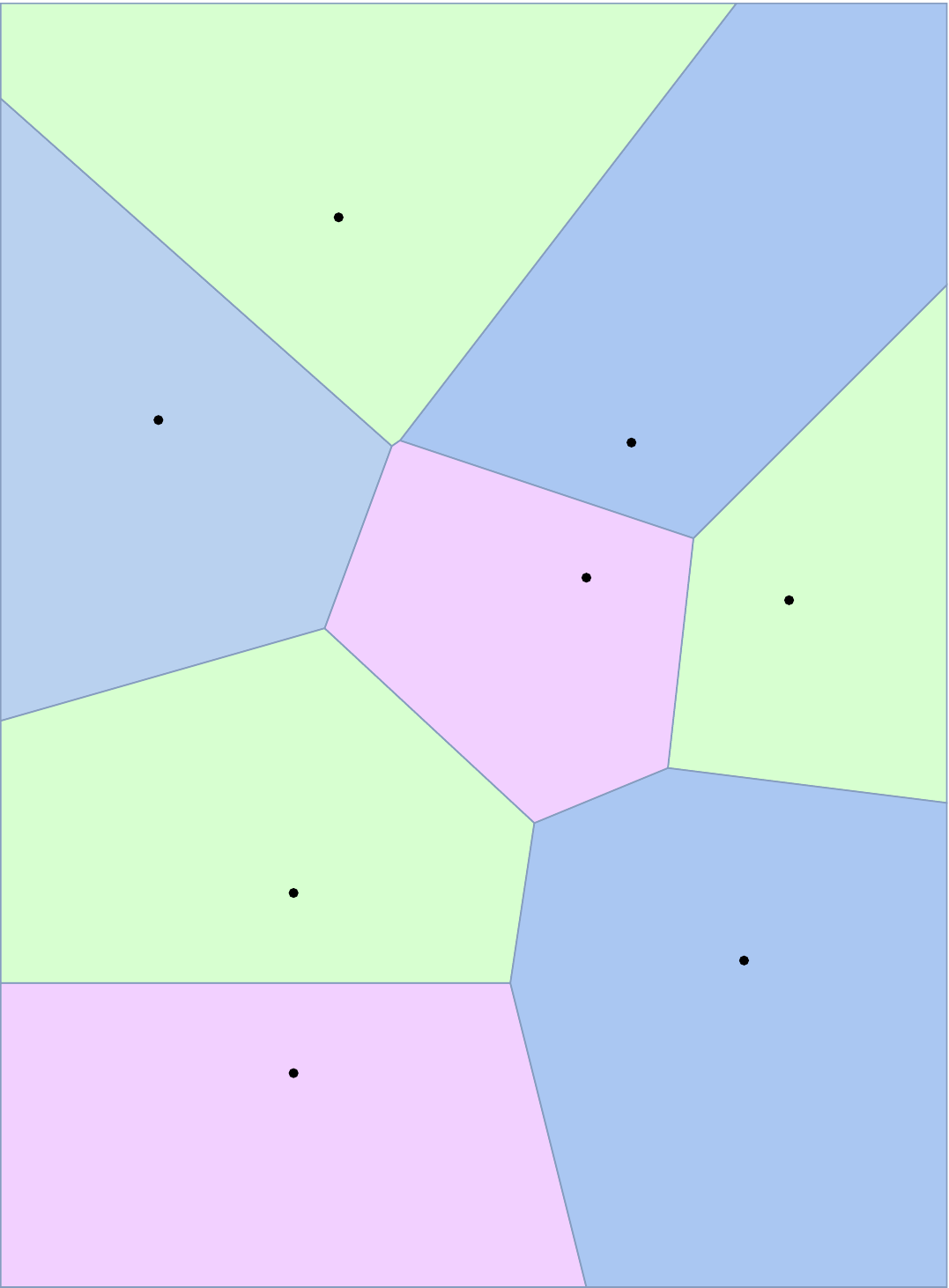}\quad \quad \includegraphics [scale=0.50]{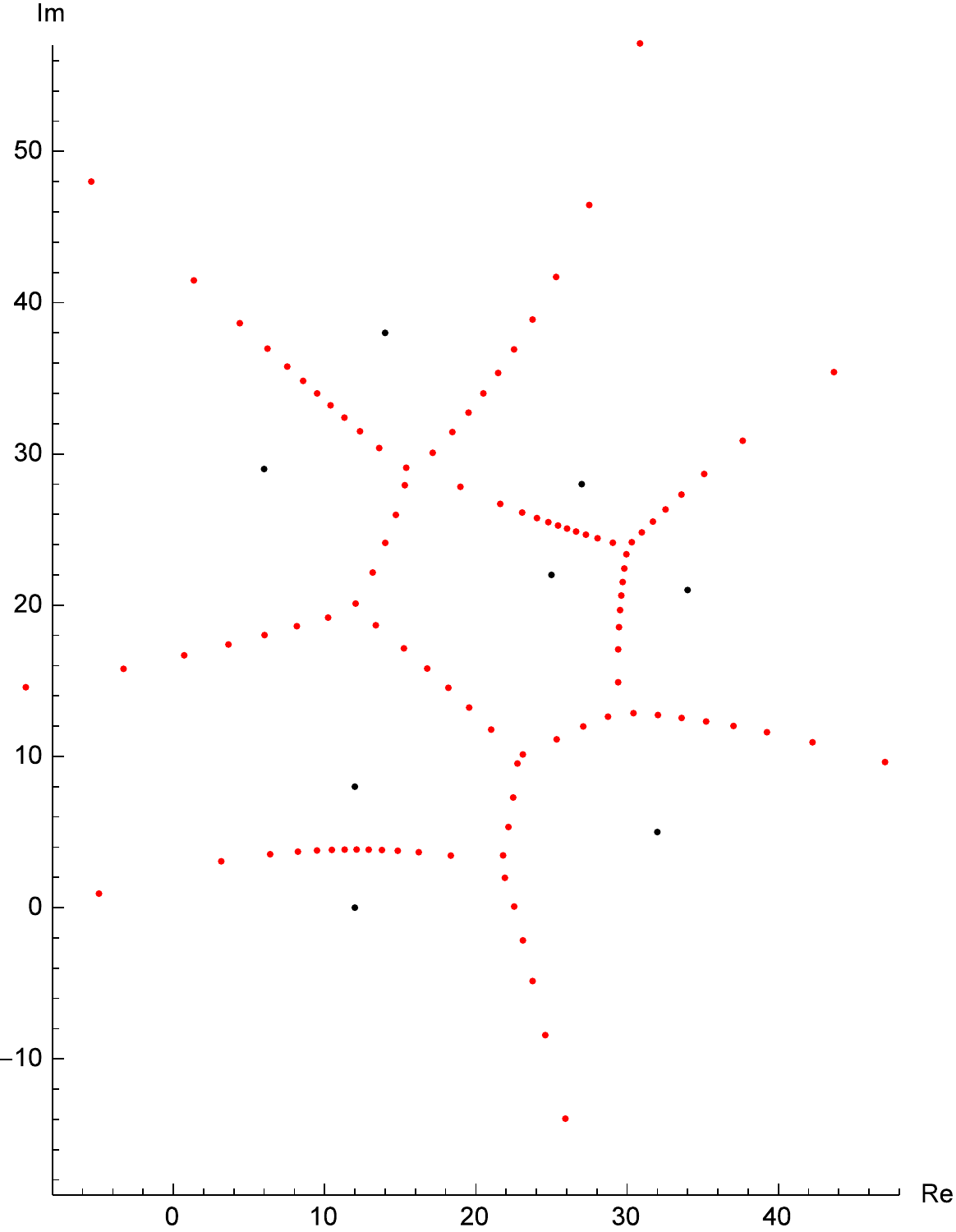} 
\end{center}
\caption{The Voronoi diagram of a polynomial $P$ of degree $8$ (left) and zeroes of $(1/P)^{(15)}$ (right).}
\label{fig1}
\end{figure}

An illustration of part (i) of this statement is given in Figure ~\ref{fig1}.

Statement (ii), for a sequence of measures implies (i), but not conversely. It follows from (ii) that the Cauchy transform of $\mu_n$ converges in $L^1_{loc}$ to the Cauchy transform of $\mu_{_S}$.

The sequence $Q_n$ may be thought of as a model (or toy) example of the asymptotic behaviour of zero-sets of sequences of polynomials or rational functions, in that it exhibits in an especially nice form features of more complicated examples. We emphasize the fact, contained in the above theorem, that the logarithmic potential of the asymptotic measure is the {\it global maximum} of a fixed finite number of harmonic functions, and not only a local maximum (as in e.g. \cite {BR, BoBo, BoSh}). In addition, (Section \ref{section: diffeq}), the Cauchy transform of the asymptotic measure trivially satisfies an algebraic equation (Corollary \ref{cor:2}).  This equation may be derived formally from a parameter dependent quasi exactly solvable differential equation of fixed degree, satisfied by the sequence of derivatives.

Perhaps the nicest feature of this approach to P\'olya's theorem through zero-counting measures, is the immediacy with which one obtains similar results for rational functions in arbitrary dimensions. Using the machinery of currents, we construct a sequence of multivariate polynomials, whose zero-sets converge to the Voronoi diagram associated with a generic hyperplane arrangement (Theorem \ref{theorem:currents}). This generalizes an example given by Demailly \cite{Dem82}. Our asymptotic measure is in certain cases a tropical current, as defined in the thesis of Babaee \cite{Babaee}.

In the last section, we have briefly discussed a similar application to polynomial lemniscates (see \cite{EbKh,PoRa}). We give a sequence of polynomials whose zero-counting measures converge to an explicit measure with support on a wide class of polynomial lemniscates. This generalization is outside the scope of P\'olya's theorem.

Finally we note that our work (see Example \ref{ex:counterexample}) provides a family of counterexamples to Conjecture 1 in  the recent paper by Shapiro and Solynin \cite{ShSo}.

The authors are indebted to Boris Shapiro, Dmitry Khavinson, Gleb Nenashev  and in particular Elizabeth Wulcan for discussions, ideas and comments, and to Alexander Eremenko and James Langley for answers to questions on P\'olya's theorem.

\section{Voronoi diagrams as the support of the Laplacian of superharmonic and subharmonic functions}
\subsection{Definition}
The Voronoi diagram (see \cite{Ok}), belonging to a finite set of points $S=\{ z_1,\dotsc,z_d\}\subset \bC$, is
a stratification of the complex plane, using the function
$$\Phi(z):=\smash{\displaystyle\min_{i=1,\dotsc,d}} \{|z-z_i|\}.$$
 The (closed) connected cell containing a $z_i\in S$, and so consisting of the points that are closest to $z_i$ of all the points in $S$, equals the set $$
 V_i=\{z: \Phi(z)=|z-z_i|\}.
 $$ Similarly, the boundary between two of these cells, $V_i$ and $V_j$, consists of a closed connected line segment that is a subset of the line
 $|z-z_i|=|z-z_j|$ :
 $$V_{ij}=\{z: \Phi(z)=|z-z_i|=|z-z_j|\}.
 $$

 Together these boundaries form the {\it 1-skeleton } $\Vor_{_S}$ of the diagram. When $S$ is the set of zeroes of a polynomial $P$ we will sometimes denote it by $\Vor_{_P}$. Finally, vertices of the Voronoi diagram are exactly the sets of points $z$ for which three or more distances $|z-z_i|,\ i=1,\dotsc,d$, coincide with $\Phi(z)$. An example is seen in Figure~1.

Clearly $\Phi(z)$ is a piecewise differentiable function with locus of non-differentiability consisting of $\Vor_{_S}$ and $S$.  
\subsection{The subharmonic function $\Psi$} 
\label{section:Psi}
 We will have essential use for a slightly different {\it subharmonic}  function $\Psi(z)$. Let 
$\tilde P=\prod_{i=1}^d(z-z_i)$ be the monic polynomial with simple zeroes at $z_1,\dotsc,z_d$.
Define the function
\begin{eqnarray*}
\Psi_S(z):=&\frac{1}{d-1}(-\log \Phi(z)+\log\vert \tilde P(z)\vert)\\
=&\frac{1}{d-1} \left(\smash{\displaystyle\max_{i=1,\dotsc,d}} \{\log( |z-z_i|^{-1})\}+\log\vert \tilde P(z)\vert\right).
\end{eqnarray*}
 Clearly $\Psi=\Psi_S$ has similar properties to $\Phi$.  
 \begin{lemma}
 $\Psi$ is a continuous subharmonic function, defined in the whole complex plane (even for points in $S$), and harmonic in the interior
of any cell $V_i$.
\end{lemma}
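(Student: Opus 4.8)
The plan is to verify each of the three asserted properties of $\Psi=\Psi_S$ in turn: continuity (including across $S$), subharmonicity, and harmonicity inside each cell $V_i$.

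First I would address continuity. Away from the points of $S$, both $\log\Phi(z)$ and $\log|\tilde P(z)|$ are continuous (indeed $\log\Phi$ is continuous everywhere as a maximum of continuous functions $\log|z-z_i|^{-1}$, which near a given point are finite), so $\Psi$ is continuous there. The only delicate issue is a point $z_k\in S$: there $\log|\tilde P(z)|\to-\infty$ like $\log|z-z_k|$, while $-\log\Phi(z)=\max_i\log|z-z_i|^{-1}\to+\infty$ like $-\log|z-z_k|$, since near $z_k$ the closest point of $S$ is $z_k$ itself. The two singularities cancel: writing $\tilde P(z)=(z-z_k)\prod_{i\ne k}(z-z_i)$ near $z_k$ one gets $\Psi(z)=\frac{1}{d-1}\log\bigl|\prod_{i\ne k}(z-z_i)\bigr|$ up to a term that vanishes, which extends continuously (indeed harmonically) across $z_k$. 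So $\Psi$ extends to a continuous function on all of $\bC$.

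Next, subharmonicity. The function $\log|\tilde P(z)|$ is harmonic off $S$ and, being the logarithmic potential of a positive measure, is subharmonic on all of $\bC$ (its distributional Laplacian is $2\pi\sum_i\delta_{z_i}\ge0$). The function $-\log\Phi(z)=\max_{i}\log|z-z_i|^{-1}$ is a finite maximum of harmonic functions on $\bC\setminus S$, hence subharmonic there; and near $z_k\in S$ it equals $\log|z-z_k|^{-1}$ plus a harmonic function, hence is the sum of a superharmonic function whose Laplacian is $-2\pi\delta_{z_k}\le 0$ and a harmonic one — so by itself $-\log\Phi$ is \emph{not} subharmonic at the points of $S$. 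The point is that in the combination $\Psi$, the negative Dirac masses from $-\log\Phi$ at each $z_k$ are exactly cancelled by the positive Dirac masses from $\log|\tilde P|$, as the computation in the previous paragraph shows. Hence $\Psi$, being the sum of a subharmonic function on $\bC\setminus S$ that extends continuously across $S$ with no negative mass, is subharmonic on $\bC$; rigorously one checks the sub-mean-value inequality at each point, or equivalently that the distribution $\Delta\Psi$ is a positive measure (this measure, up to normalization, is precisely the $\mu_S$ of Theorem~\ref{th:voronoi}, supported on $\Vor_S$).

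Finally, harmonicity in the interior of a cell $V_i$. On $\mathrm{int}\,V_i$ we have $\Phi(z)=|z-z_i|$, so $-\log\Phi(z)=\log|z-z_i|^{-1}$, which is harmonic there since $z_i$ itself lies on the boundary of $V_i$, not its interior (the interior of $V_i$ excludes $z_i$ only if one is careful — but in fact even at $z_i$ the cancellation above shows $\Psi$ is harmonic). Thus on $\mathrm{int}\,V_i$,
\[
\Psi(z)=\frac{1}{d-1}\Bigl(\log|z-z_i|^{-1}+\log|\tilde P(z)|\Bigr)=\frac{1}{d-1}\log\Bigl|\prod_{j\ne i}(z-z_j)\Bigr|,
\]
a sum of harmonic functions (the $z_j$, $j\ne i$, being exterior to $\mathrm{int}\,V_i$), hence harmonic. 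The main obstacle is the bookkeeping at the points of $S$: one must make the cancellation of singularities precise and confirm that no spurious negative contribution to $\Delta\Psi$ survives there, so that subharmonicity genuinely holds on the \emph{whole} plane and not merely off $S$.
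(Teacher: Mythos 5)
Your proof is correct, but for the subharmonicity part you take a genuinely different route from the paper. The paper's argument is a one-liner: it absorbs $\log|\tilde P|$ into the maximum, writing $(d-1)\Psi(z)=\max_{i}\bigl(\log|z-z_i|^{-1}+\log|\tilde P(z)|\bigr)=\max_i\log\bigl|\prod_{j\neq i}(z-z_j)\bigr|$, so that $\Psi$ is exhibited as a finite maximum of functions that are (sub)harmonic on all of $\bC$; subharmonicity everywhere, including at the points of $S$, is then immediate with no distributional computation, and the same identity restricted to a cell gives continuity and harmonicity in $\mathrm{int}\,V_i$, exactly as in your last display. You instead keep the decomposition $\Psi=\frac{1}{d-1}(-\log\Phi+\log|\tilde P|)$ as a sum and verify that the negative Dirac mass of $-\log\Phi$ at each $z_k$ is cancelled by the positive one of $\log|\tilde P|$, then check $\Delta\Psi\ge0$ as a distribution. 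This works (the key point, which you do note, is that $\Phi(z)=|z-z_k|$ in a full neighbourhood of $z_k$, i.e. $z_k\in\mathrm{int}\,V_k$, so near $z_k$ the function $\Psi$ coincides with the harmonic function $\frac{1}{d-1}\log|\prod_{j\neq k}(z-z_j)|$), and it has the side benefit of anticipating the identification of $\Delta\Psi$ with the measure $\mu_{_S}$ of Proposition \ref{lemma:1}; the paper's rewriting is simply shorter and avoids distributions at this stage. One slip to fix: in your last paragraph you assert that $z_i$ lies on the boundary of $V_i$ rather than in its interior — in fact $z_i$ is an interior point of its own cell; this does not damage the argument, since your product formula $\Psi=\frac{1}{d-1}\log|\prod_{j\neq i}(z-z_j)|$ is harmonic on all of $\mathrm{int}\,V_i$ (the $z_j$, $j\neq i$, lying outside that cell), and indeed the fact $z_i\in\mathrm{int}\,V_i$ is precisely what your cancellation argument at the points of $S$ relies on.
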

\begin{proof}
In fact, in the Voronoi cell $V_i$ we have
\begin{equation*}
(d-1)\Psi(z)=\log |z-z_i|^{-1}+\log\vert\tilde P(z)\vert,
\end{equation*}
hence $\Psi$ is continuous in $\bC$, and harmonic in the interior of $V_i$.
Also, $\Psi(z)$ is the maximum of a finite number of harmonic functions, and is thus subharmonic.
\end{proof} 

Since $\Psi$ is subharmonic, $\Delta\Psi=4\frac{\partial^2 \Psi}{\partial \bar z\partial z}$ is a positive measure, with support on the boundary of the cells, i.e. on $\Vor_{_S}$. 

\begin{proposition}
\label{lemma:1}
Define a measure with support on the line $l_{ij}:\ \vert z-z_i\vert =\vert z-z_j\vert$ as
$$
\delta_{ij}=\frac{1 }{4(d-1)}\frac{\vert  z_i-z_j \vert}{\vert (z-z_i)(z-z_j)\vert}\diff s.
$$
where $\mathrm{d}s$ is Euclidean length measure in the complex plane. Then
\begin{enumerate}
\item  $\frac{\partial^2 \Psi}{\partial \bar z\partial z}$ is the sum of all $\delta_{ij}$, each restricted to $V_{ij}$.

\item $\mu_{_S}:=\frac{2}{\pi}\frac{\partial^2 \Psi}{\partial \bar z\partial z}$ is a probability measure.
\end{enumerate}
\end{proposition}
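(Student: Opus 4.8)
The plan is to compute $\partial^2\Psi/\partial\bar z\,\partial z$ directly as a distribution, exploiting the fact that $\Psi$ is piecewise harmonic with the only singularities concentrated on $\Vor_{_S}$. First I would localize: fix an edge $V_{ij}$ of the diagram, pick a point $z_0$ in its relative interior, and work in a small disk $D$ around $z_0$ that meets neither $S$ nor any vertex of the diagram and only the cells $V_i$ and $V_j$. On $D\cap V_i$ we have $(d-1)\Psi=\log|z-z_i|^{-1}+\log|\tilde P|$, and on $D\cap V_j$ we have $(d-1)\Psi=\log|z-z_j|^{-1}+\log|\tilde P|$; since $z_i,z_j\notin D$ both expressions are harmonic there, and they agree on the separating arc $D\cap l_{ij}$. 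Thus $\Psi$ restricted to $D$ is exactly of the form ``a globally harmonic function plus a function obtained by gluing two harmonics across a smooth curve,'' so its distributional Laplacian is a single-layer measure on $D\cap l_{ij}$ whose density is the jump in the normal derivative.

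The second step is the jump computation. Writing $h = \tfrac{1}{d-1}(\log|z-z_i|^{-1}-\log|z-z_j|^{-1})$, the function $\Psi - \tfrac{1}{d-1}\log|\tilde P|$ equals $h$ on $V_i$-side and $0$ on $V_j$-side (after subtracting the common harmonic piece $\tfrac1{d-1}\log|z-z_j|^{-1}$, say), so by the standard jump formula $\Delta\Psi = \bigl(\partial_n h\bigr)\big|_{l_{ij}}\,\mathrm ds$ as a measure on $D\cap l_{ij}$, where $\partial_n$ is the derivative in the direction normal to $l_{ij}$ pointing into $V_i$. On the line $l_{ij}$ one has $|z-z_i|=|z-z_j|=:\rho$, and the gradient of $\log|z-z_i|^{-1}$ is $-(z-z_i)/|z-z_i|^2$ viewed as a vector of length $1/\rho$; decomposing $z-z_i$ and $z-z_j$ into components along and normal to $l_{ij}$, the along-components cancel in $h$ (they must, since $h$ is constant along $l_{ij}$), and the normal components combine. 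A clean way to see the answer: the unit normal to $l_{ij}$ is $n=(z_j-z_i)/|z_j-z_i|$, and $\partial_n\log|z-z_k|^{-1} = -\mathrm{Re}\,\overline{n}\,\tfrac{1}{z-z_k}$; since $\tfrac{1}{z-z_i}-\tfrac1{z-z_j}=\tfrac{z_i-z_j}{(z-z_i)(z-z_j)}$, summing gives
$$
\partial_n h=\frac{1}{d-1}\,\mathrm{Re}\!\left(\overline{n}\,\frac{z_i-z_j}{(z-z_i)(z-z_j)}\right)
=\frac{1}{d-1}\,\frac{|z_i-z_j|}{|(z-z_i)(z-z_j)|}\cdot(\pm 1),
$$
because on $l_{ij}$ the complex number $(z-z_i)(z-z_j)$ turns out to be a real negative multiple of $(z_i-z_j)/\overline{n}=-|z_i-z_j|$ up to the sign giving positivity on the correct side (this is exactly the geometric fact that the normal to the perpendicular bisector is the direction $z_i-z_j$). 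Choosing the orientation so that $\Psi$ is subharmonic, i.e. the measure is $\ge 0$, yields $\Delta\Psi = \tfrac{1}{d-1}\tfrac{|z_i-z_j|}{|(z-z_i)(z-z_j)|}\,\mathrm ds = 4\,\delta_{ij}$ locally on $V_{ij}$; dividing by $4$ gives part (1), once we note that distinct edges and vertices contribute nothing extra — vertices are a finite set, hence a null set for any $\mathrm ds$-type measure, so no additional atoms appear.

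For part (2), $\mu_{_S}=\tfrac2\pi\,\partial^2\Psi/\partial\bar z\partial z=\tfrac1{2\pi}\Delta\Psi$ is a nonnegative measure by subharmonicity; it remains to show its total mass is $1$. I would get this from the asymptotics of the logarithmic potential: $\Psi$ is, up to the harmonic-on-$\bC$ issue, the logarithmic potential of $\mu_{_S}$, and as $|z|\to\infty$ one computes $(d-1)\Psi(z)=\log|\tilde P(z)| - \log|z-z_{i(z)}| = (d-1)\log|z| + o(\log|z|)$, so $\Psi(z) = \log|z| + o(\log|z|)$; comparing with the general fact that a compactly supported measure $\nu$ has potential $\|\nu\|\log|z| + o(\log|z|)$ at infinity forces $\|\mu_{_S}\|=1$. (Alternatively one can sum the edge integrals $\int_{V_{ij}}\delta_{ij}$ directly, but the potential-at-infinity argument is cleaner and avoids a delicate summation over the combinatorics of the diagram.) The main obstacle is the sign bookkeeping in the jump formula — making sure the normal derivative is computed with the orientation consistent with subharmonicity, and verifying the geometric identity that on the perpendicular bisector $(z-z_i)(z-z_j)$ has the phase that makes the real part collapse to $\pm|(z-z_i)(z-z_j)|$; once that is pinned down, everything else is routine.
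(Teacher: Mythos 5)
Your argument for part (1) is correct in substance and takes a genuinely different (more classical, real-variable) route than the paper: you localize at an interior point of an edge and invoke the single-layer/jump formula for the distributional Laplacian of a continuous function glued from two harmonic pieces, whereas the paper first proves a Stokes-theorem lemma (Lemma \ref{lemma:piecewise}) expressing $\partial^2\Psi/\partial\bar z\partial z$ through curve integrals of $A_i-A_j$ with $A_i=\partial H_i/\partial z$, and then parametrizes the bisector explicitly; the remark after that proof notes that the Plemelj--Sokhotski route, which is morally your jump formula, is equally available, so the two computations deliver the same density. Two details in your sketch need repair. First, the directional derivative of $\log|z-z_k|^{-1}$ in the unit direction $n$ is $-\mathrm{Re}\bigl(n/(z-z_k)\bigr)$, not $-\mathrm{Re}\bigl(\bar n/(z-z_k)\bigr)$; with your $\bar n$ the real part on $l_{ij}$ acquires a factor $\cos(2\arg(z_j-z_i))$ and does not collapse to the modulus. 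With the correct $n=(z_j-z_i)/|z_j-z_i|$ and the identity $(z-z_i)(z-z_j)=-(z_j-z_i)^2(1/4+t^2)$ on the bisector, the relevant quantity is genuinely real and one gets $|\partial_n h|=\frac{1}{d-1}\,\frac{|z_i-z_j|}{|(z-z_i)(z-z_j)|}$, i.e.\ $\Delta\Psi=4\delta_{ij}$ on $V_{ij}$, as you claim. Second, discarding the vertices by saying they are ``a null set for any $\mathrm{d}s$-type measure'' is circular, since whether the measure is of that type is exactly what is being proved; a correct one-line reason is that an atom of the positive Riesz measure at a vertex would force $\Psi\to-\infty$ there, contradicting the continuity of $\Psi$.

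Part (2) as written has a genuine gap. The general fact you invoke concerns compactly supported measures, while $\mu_{_S}$ is carried by the unbounded skeleton $\Vor_{_S}$; moreover, the identification of $\Psi$ with the logarithmic potential of $\mu_{_S}$ is exactly Corollary \ref{cor:probmeasure}, which the paper proves afterwards and whose proof already uses that $\mu_{_S}$ is a probability measure, so assuming it at this stage is circular. Your underlying idea---read the total mass off the growth $\Psi(z)=\log|z|+o(1)$---can be made rigorous without that identification: apply the Jensen--Privalov formula to the subharmonic function $\Psi$ itself, namely $m(R)-m(r_0)=\int_{r_0}^{R} n(t)\,\mathrm{d}t/t$ with $n(t)=\mu_{_S}(D_t)$ and $m$ the circle mean of $\Psi$; since $m(R)=\log R+o(1)$ and $n$ is nondecreasing, this forces $n(t)\to 1$. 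Alternatively, fall back on the paper's argument, which integrates the curve-integral representation over a large disk, kills the bounded cells by Cauchy's theorem, and uses $A_i(z)\sim 1/(2z)$ to reduce the total mass to $\frac{1}{2\pi i}\oint_{C_R}\mathrm{d}z/z=1$.
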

We will later, in Corollary \ref{cor:probmeasure}, also see that $\Psi$ is the logarithmic potential of $\mu_{_S}$. For the proof of the proposition we need to recall some general facts on piecewise harmonic functions.

\subsection{The Laplacian of a piecewise harmonic function}
Assume that $\Psi$ is a continuous and subharmonic function, harmonic and equal to $H_i$ in the open sets $V_i,\ i=1,\dotsc,d$, that form a cover of $\bC$ a.e. Assume that each $H_i$ is defined and harmonic in an open set containing $\bar V_i$.
Let $A_i := \partial H_i/\partial z$. Assume finally that $\partial V_i=\cup_{j}V_{ij}$ is a finite union of $C^1$ curves, where $V_{ij}\subset \partial V_i\cap \partial V_j$ are closed as sets. We set $V_{ij}=V_{ji}$, and also define $V_{ij} = \emptyset$ if $V_i$ and $V_j$ have no common curve segment as a border between them. Orient $V_{ij}$ so that $V_{\min(i,j)}$
 is on the right of $V_{ij}$.

\begin{lemma} 
\label{lemma:piecewise}
 As distributions, acting on a test function $f\in C_c(\mathbb C)$,
 \begin{enumerate}
 \item $$\left\langle\frac{ \partial \Psi}{\partial z}, f\right\rangle =\sum_{i}\int_{V_{i}}A_i(z)f(z)\,\mathrm{d}x\wedge \mathrm{d}y.$$
 \item $$\left\langle\frac{ \partial^2 \Psi}{\partial \bar z\partial z}, f\right\rangle = \sum_{i<j}\frac{1}{2i}\int_{V_{ij}}(A_i(z)-A_j(z))f(z)\,\mathrm{d}z.$$
 
 \end{enumerate}
\end{lemma}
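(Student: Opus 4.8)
The plan is to prove both identities by unwinding the definitions of the distributional derivatives $\partial\Psi/\partial z$ and $\partial^2\Psi/\partial\bar z\,\partial z$ and applying Stokes' theorem cell by cell, arranging things so that the interior integrals produce the stated area integrals while the boundary contributions along the shared edges $V_{ij}$ cancel (for (1)) or combine (for (2)). I will test against $f\in C_c^\infty(\bC)$; since part (1) will exhibit $\partial\Psi/\partial z$ as represented by a locally bounded function and the right-hand sides depend continuously on $f$ in the uniform norm on a fixed compact set, the statements for general $f\in C_c(\bC)$ follow by density.

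For (1), write $\langle\partial\Psi/\partial z,f\rangle=-\int_{\bC}\Psi\,(\partial f/\partial z)\,\mathrm{d}x\wedge\mathrm{d}y=-\sum_i\int_{V_i}H_i\,(\partial f/\partial z)\,\mathrm{d}x\wedge\mathrm{d}y$, using that the $V_i$ cover $\bC$ up to the measure-zero set $\bigcup_i\partial V_i$ and that $\Psi=H_i$ on $V_i$. Since $H_i$ is harmonic, hence smooth, in a neighbourhood of $\bar V_i$, we have $H_i\,(\partial f/\partial z)=\partial(H_if)/\partial z-A_if$ on $V_i$, and the identity $\int_{V_i}(\partial g/\partial z)\,\mathrm{d}x\wedge\mathrm{d}y=-\tfrac{1}{2i}\int_{\partial V_i}g\,\mathrm{d}\bar z$ (Stokes, applied to the compactly supported $g=H_if$) yields
$$\Big\langle\frac{\partial\Psi}{\partial z},f\Big\rangle=\sum_i\int_{V_i}A_if\,\mathrm{d}x\wedge\mathrm{d}y+\sum_i\frac{1}{2i}\int_{\partial V_i}H_if\,\mathrm{d}\bar z.$$
Now $\partial V_i=\bigcup_jV_{ij}$, each edge $V_{ij}$ occurs once in $\partial V_i$ and once in $\partial V_j$ with opposite boundary orientations, and on $V_{ij}$ continuity of $\Psi$ forces $H_i=\Psi=H_j$; hence the boundary sum cancels in pairs, giving (1). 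The finitely many vertices contribute nothing.

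For (2), by (1) the function $\partial\Psi/\partial z$ is represented by the function equal to $A_i$ on $V_i$, which is legitimate and locally bounded because each $A_i=\partial H_i/\partial z$ is holomorphic near $\bar V_i$. Applying $\partial/\partial\bar z$ distributionally and repeating the cell-by-cell computation gives $\langle\partial^2\Psi/\partial\bar z\,\partial z,f\rangle=-\sum_i\int_{V_i}A_i\,(\partial f/\partial\bar z)\,\mathrm{d}x\wedge\mathrm{d}y$. The key point is that $\partial A_i/\partial\bar z=0$, so $A_i\,(\partial f/\partial\bar z)=\partial(A_if)/\partial\bar z$, and Stokes in the form $\int_{V_i}(\partial g/\partial\bar z)\,\mathrm{d}x\wedge\mathrm{d}y=\tfrac{1}{2i}\int_{\partial V_i}g\,\mathrm{d}z$ reduces this to $-\sum_i\tfrac{1}{2i}\int_{\partial V_i}A_if\,\mathrm{d}z$. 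It remains to regroup by edges: for $i<j$ the chosen orientation of $V_{ij}$ has $V_i$ on the right, so $V_{ij}$ appears in $\partial V_i$ (oriented with $V_i$ on the left) with the reversed orientation and in $\partial V_j$ with the stated orientation; the two contributions then combine to $\tfrac{1}{2i}\int_{V_{ij}}(A_i-A_j)f\,\mathrm{d}z$, and summing over $i<j$ gives (2).

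The parts that genuinely require care are: justifying Stokes' theorem on the cells $V_i$, which may be unbounded (handled by the compact support of $f$) and have corners at the vertices of the diagram (invoke Stokes for domains with piecewise-$C^1$ boundary, or exhaust each $V_i$ by slightly shrunken smooth subdomains and pass to the limit); and pinning down the orientation conventions precisely enough that the signs assemble into $A_i-A_j$ with $i<j$ rather than its negative. Everything else is routine bookkeeping once the two Stokes-type identities above are in hand.
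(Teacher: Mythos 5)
Your argument is correct. For part (2) it is essentially identical to the paper's: write $\langle\partial^2\Psi/\partial\bar z\partial z,f\rangle=-\sum_i\int_{V_i}A_i\,\partial f/\partial\bar z$, use holomorphy of $A_i$ to pull the derivative onto $A_if$, apply Stokes on each cell, and regroup the boundary integrals edge by edge using the stated orientation convention; your sign bookkeeping (each $V_{ij}$, $i<j$, appearing reversed in $\partial V_i$ and with its given orientation in $\partial V_j$, yielding $\tfrac{1}{2i}\int_{V_{ij}}(A_i-A_j)f\,\mathrm{d}z$) matches the paper. Where you diverge is part (1): the paper simply cites the fact that for a continuous, piecewise differentiable function the pointwise a.e.\ derivative coincides with the distributional derivative (Prop.~2 of \cite{BoBo}), whereas you prove this directly in the present setting by integrating $\Psi\,\partial f/\partial z$ by parts cell by cell and observing that the boundary terms $\tfrac{1}{2i}\int_{\partial V_i}H_if\,\mathrm{d}\bar z$ cancel in pairs because $H_i=\Psi=H_j$ on each common edge and the orientations are opposite. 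This makes the lemma self-contained at the cost of a second Stokes computation; the paper's route is shorter but rests on the quoted result. Your closing remarks on the genuine technical points --- Stokes on unbounded cells with corners (handled by compact support of $f$ and piecewise-$C^1$ boundaries or exhaustion), and the extension from smooth to merely continuous test functions once both sides are seen to be of order zero --- are exactly the right caveats, and the paper treats them with the same level of informality.
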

\begin{proof}
In an interior point $z\in V_i$, $\Psi(z)$ has a pointwise derivative  $A_i=\partial H_i/\partial z$, and hence $\Psi(z)$ has a pointwise derivative a.e. (with respect to Lebesgue measure).  This derivative is a $L^1_{loc}$-function, and so defines a distribution. For a general function the pointwise derivative does not necessarily coincide with the distributional derivative; however for a function that is continuous and piecewise differentiable, this is true (see e.g. Prop. 2 in \cite{BoBo}). This proves (1).

Hence
$$
\left\langle\frac{ \partial^2 \Psi}{\partial \bar z\partial z}, f\right\rangle = -\left\langle\frac{ \partial \Psi(z)}{\partial z}, \frac{ \partial f(z)}{\partial \bar z}\right\rangle = -\sum_{i}\int_{V_{i}}A_i(z)\frac{ \partial f(z)}{\partial \bar z}\,\mathrm{d}x\wedge \mathrm{d}y.
$$
But, since $A_i$ is holomorphic,
$$A_i(z)\frac{ \partial f(z)}{\partial \bar z}=\frac{ \partial (A_i(z)f(z))}{\partial \bar z}, \ z\in V_i,$$
so the last sum equals, using Stokes' theorem (\cite[1.2.2]{HoComplex}),
\begin{equation}
\label{eq:curveintegral}
-\frac{1}{2i}\sum_{i}\int_{\partial V_{i}}A_i(z)f(z)\,\mathrm{d}z.
\end{equation}
The curve integrals in this sum use an orientation of $\partial V_{i}$, such that $V_i$ is to the left of $\partial V_{i}$. Rewriting (\ref{eq:curveintegral}) as a sum of curve integrals on $V_{ij}, $ noting that each $V_{ij}$ occurs in both $\partial V_{i}$ and $\partial V_{j}$ with opposite orientation gives (2).
\end{proof}

\subsection{Proof of Proposition \ref{lemma:1}}
For $\Psi$ as in \ref{section:Psi},
\begin{equation}
\label{eq:Ana}
A_i(z):=\frac{\partial \Psi}{\partial z}=(2d-2)^{-1}\left(-(z-z_i)^{-1}+\sum_{j=1}^d(z-z_j)^{-1}\right),\ z\in V_i,
\end{equation}

and hence 
$$A_i(z)-A_j(z)=\frac{z_j-z_i}{(2d-2)(z-z_i)(z-z_j)}.$$

$V_{ij}$ is a segment of the orthogonal bisector of the line segment between $z_1$ and $z_2$, that is given by the equation
$z=(1/2)(z_i+z_j)-ti(z_j-z_i),\ t\in  \mathbb R$ (with the orientation that $V_i$ is to the right of $V_{ij}$). Assume that $z\in V_{ij}.$ Then 
$z-z_i=(z_j-z_i)(1/2-ti)$ as well as $z-z_j=(z_j-z_i)(-1/2-ti)$,  and since $\diff z=-i(z_j-z_i)\diff t$, 
\begin{equation}
\label{eq:part1}
(A_i(z)-A_j(z))\diff z=\frac{i\diff t}{(2d-2)(1/4+t^2)}.
\end{equation}

The Pythagorean theorem applied to the triangle with vertices $z,\,z_i,$ and $(1/2)(z_i+z_j)$ gives that $\vert z-z_i\vert^2=\vert z_j-z_i\vert^2((1/4)+t^2)$. Length measure along $V_{ij} $ is given by $\diff s=\vert z_j-z_i\vert \diff t.$ Inserting these expressions for $t$ and $\diff t$ in (\ref{eq:part1}), and applying Lemma \ref{lemma:piecewise} (2), gives part (1) of the proposition.

\begin{remark}{\rm One could also have used the Plemelj-Sokhotski relations, to directly derive (1), see \cite[Lemma 2]{BoBo} or (in a different form) Theorem II.1.5 (i) of \cite{SaffTotik}. }
\end{remark}

Now for the second part of the proof, 
let $C_R$ be the circle $\vert z\vert =R$, and $D_R$ be the disk $\vert z\vert \leq R$. Choose $R$ so large that all bounded cells in the Voronoi diagram are contained in the interior of $D_R$, and let $\chi_R$ be the characteristic function of $D_R$. 
By (\ref{eq:curveintegral}) in the proof of Lemma \ref{lemma:piecewise},
\begin{equation*}
\mu_{_S}(D_R)= \frac{2}{\pi}\left\langle\frac{ \partial^2 \Psi}{\partial \bar z\partial z}, \chi_R\right\rangle= -\frac{1}{\pi i}\sum_{i}\int_{\partial V_{i}\cap D_R}A_i(z)\,\mathrm{d}z.
\end{equation*}
The contribution of the boundaries of the bounded cells to the sum is zero, and the contribution of the unbounded cells $V_i,\ i=1,\dotsc,d$ equals the curve integral of $A_i(z) = \frac{\partial \Psi}{\partial z}$  on $V_i\cap C_R$ (both statements by Cauchy's theorem), so that
$$
\mu_{_S}(D_R)= \frac{1}{\pi i}\sum_{i=1}^d\int_{\partial V_{i}\cap C_R}A_i(z)\,\mathrm{d}z. 
$$
Note that the induced orientation on $C_R$ is positive. It remains only to exploit that $A_i(z)\sim 1/2z$ at infinity. Set $\frac{\partial \Psi}{\partial z}=(1/2z)+B(z)$. By (\ref{eq:Ana}),
$$
B(z)=\frac{1}{(2d-2)} \sum_{j\neq i}\frac{z_j}{z(z-z_j)},\quad\text{ for } z\in V_i,
$$
so that it, by trivial estimates, satisfies $\lim_{R\to\infty}\int_{C_R}B(z)\,\mathrm{d}z=0$. Hence $$
\lim_{R\to\infty} \mu_{_S}(D_R)=\frac{1}{2\pi i }\int_{C_R}\frac{\mathrm{d}z}{z}=1.
$$
\begin{remark}{\rm Since our goal is just to understand $\Psi$ we have not striven for generality in the previous proposition; it is clear that one could reformulate the above discussion as a general statement on piecewise harmonic and subharmonic functions, whose Laplacian has unbounded support. }
\end{remark}

\section{When the Voronoi diagram is a line}
 As an illuminating example of our theorem, we will  describe the easiest case, in which all calculations can be made explicit. Let
\begin{equation}
\label{eq:2poles}
Q(z):=\frac{A_1}{z-z_1}+\frac{A_2}{z-z}_2,\ A_1A_2\neq 0,\ z_1\neq z_2.
\end{equation}

 The Voronoi diagram associated with $\{ z_1,z_2\}$ consists of the line $V_{12},$ given by the condition $\vert z-z_1\vert=\vert z-z_2\vert$. This is the line that orthogonally bisects the line segment between $z_1$ and $z_2$. The M{\"o}bius transformation $$M(z):=\frac{z-z_1}{z-z_2}$$ maps $V_{12}$ injectively to the unit circle $C$,  with all $C$ except $1$ in the image. It will be crucial below. It has  inverse
$$
M^{-1}(z)=\frac{zz_2-z_1}{z-1}.
$$ 
 Normalized angular measure 
  $$\mathrm{d}\theta:=\frac{1}{2\pi}\,\mathrm{d}s,$$
(where $\mathrm{d}s$ is Euclidean length measure in the plane), seen as a measure on $\bC$ with support on $C$, pulls back by $z=M(w)$ to give a measure 
 $$
\mu:=\frac{1}{2\pi }\vert M'(w)\vert\,\mathrm{d}s=\frac{\vert z_1-z_2\vert}{2\pi\vert z-z_1\vert\vert z-z_2\vert}\,\mathrm{d}s
 $$
 with support on $V_{12}$. Recall that the pullback measure is defined by the property 
  \begin{equation}
\label{eq:2measures}
 \int f(M^{-1}(z))\,\mathrm{d}\theta=\int f(w)\,\mathrm{d}\mu,
\end{equation}
 where $f\in C_c(K)$ is a test function with support in the compact disc $K$.
 
 The pullback measure $\mu$ on $V_{12}$ turns out to be the asymptotic measure. The proof is quite explicit, since we can find the zeroes of
 $$
 Q^{(n-1)}(z)=\frac{(-1)^{n-1}(n-1)!( A_1(z-z_2)^{n}+ A_2(z-z_1)^{n})}{((z-z_1)(z-z_2))^{n}}, $$
  (for $ n\geq 1)$ as the solutions to
 $$
 A_1(z-z_2)^{n}+ A_2(z-z_1)^{n}=0 \iff \frac{z-z_1}{z-z_2}=(-A_1/A_2)^{\frac{1}{n}}\epsilon_n^{i}, \ i=0,1,\dotsc,n-1,
 $$
where $b_n:=(-A_1/A_2)^{\frac{1}{n}}$ is {\it one} choice of an $n$'th root, and $\epsilon_n$ is a primitive $n$'th root of unity.
Hence the zeroes are 
\begin{equation*}
\xi_i:=M^{-1}(b_n\epsilon_n^{i}), \ i=0,1,\dotsc,n-1.
\end{equation*}
(Note that possibly one $b_n\epsilon_n^{i}$ equals $1$,  so that $M^{-1}(b_n\epsilon_n^{i})$ is not defined. Then there are only $n-1$ zeroes of $Q^{(n-1)}$. This will mean an obvious, but inessential change in the argument below, which we will not detail.)

The following proposition is a special case of part (i) of the theorem.

\begin{proposition}
\label{lemma:zeroes2}
Assume that $Q$ is given by (\ref{eq:2poles}), and let $\mu_{n-1}=\frac{1}{n}\sum_{i=0}^{n-1}\delta_{\xi_i}$ be the zero-counting measure of $Q^{(n-1)}$as above. Then $\mu_{n-1}\to \mu$ as distributions (or measures).
\end{proposition}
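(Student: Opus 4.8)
The plan is to exploit the explicit formula for the zeroes $\xi_i = M^{-1}(b_n\epsilon_n^{i})$ and transport the problem to the unit circle via the Möbius map $M$. Since $\mu$ was defined precisely as the pullback of normalized angular measure $\mathrm{d}\theta$ under $M$, equation~(\ref{eq:2measures}) tells us that for a test function $f\in C_c(K)$ we have $\int f\,\mathrm{d}\mu = \int (f\circ M^{-1})\,\mathrm{d}\theta$. So it suffices to show that $\int f\,\mathrm{d}\mu_{n-1} = \frac1n\sum_{i=0}^{n-1} f(\xi_i) = \frac1n\sum_{i=0}^{n-1} (f\circ M^{-1})(b_n\epsilon_n^{i})$ converges to $\int (f\circ M^{-1})\,\mathrm{d}\theta$ as $n\to\infty$.

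The second step is the heart of the matter: the points $b_n\epsilon_n^{i}$, $i=0,\dots,n-1$, are the $n$-th roots of the fixed number $-A_1/A_2$, hence they are $n$ equally spaced points on the circle of radius $|A_1/A_2|^{1/n}$. As $n\to\infty$ this radius tends to $1$, so these points form an \emph{asymptotically equidistributed} set of points converging to the unit circle $C$. Concretely, I would write $b_n\epsilon_n^{i} = r_n e^{i(\phi_0 + 2\pi i/n)}$ with $r_n = |A_1/A_2|^{1/n}\to 1$ and $\phi_0 = \arg(b_n)$, so that $\frac1n\sum_{i=0}^{n-1}(f\circ M^{-1})(b_n\epsilon_n^i)$ is a Riemann-type sum for $\frac{1}{2\pi}\int_0^{2\pi}(f\circ M^{-1})(e^{i\theta})\,\mathrm{d}\theta$. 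One subtlety is that $g:=f\circ M^{-1}$ is continuous on $C$ only away from the point $1$ (where $M^{-1}$ blows up), but since $f$ has compact support $K$ in the plane, $g$ extends continuously by $0$ to all of $C$ including $1$; thus $g$ is genuinely continuous on the compact circle $C$, and the Riemann sum over equally spaced points on circles of radius $r_n\to1$ converges to $\frac{1}{2\pi}\int_C g\,\mathrm{d}s = \int g\,\mathrm{d}\theta$. (If some $b_n\epsilon_n^i = 1$ we simply drop that one term, changing the sum by $O(1/n)$, as remarked in the text.)

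The main obstacle, and the point requiring genuine care, is the interchange of the two limiting processes: the radius $r_n\to1$ and the mesh of the angular partition $\to 0$ happen simultaneously, and $g$ need only be continuous (not Lipschitz), so I cannot simply quote a fixed-contour Riemann-sum estimate. I would handle this by a two-step argument: first show $\frac1n\sum_i g(r_n e^{i\theta_{i,n}}) - \frac1n\sum_i g(e^{i\theta_{i,n}}) \to 0$ using uniform continuity of $g$ on an annulus $\{1/2 \le |z| \le 2\}$ together with $|r_n e^{i\theta} - e^{i\theta}| = |r_n - 1|\to 0$ uniformly in $\theta$; then show $\frac1n\sum_i g(e^{i\theta_{i,n}}) \to \frac{1}{2\pi}\int_0^{2\pi} g(e^{i\theta})\,\mathrm{d}\theta$, which is the standard convergence of Riemann sums for the continuous function $\theta\mapsto g(e^{i\theta})$ over the shrinking-mesh equispaced partition with a rotating base point $\phi_0=\phi_0(n)$ (the rotating base point causes no problem since equispaced Riemann sums of a continuous periodic function converge independently of the offset). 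Combining these with (\ref{eq:2measures}) gives $\mu_{n-1}\to\mu$ tested against every $f\in C_c(\bC)$, which is exactly weak convergence of measures.
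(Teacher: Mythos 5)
Your proposal is correct and follows essentially the same route as the paper's proof: both pull the problem back to the unit circle via $M$ and identify $\frac1n\sum_i f(M^{-1}(b_n\epsilon_n^i))$ as a Riemann-type sum converging to $\frac{1}{2\pi}\int_0^{2\pi} f(M^{-1}(e^{it}))\,\mathrm{d}t$, then invoke (\ref{eq:2measures}). The paper simply asserts this convergence in one line, whereas you supply the supporting details (continuous extension of $f\circ M^{-1}$ by $0$ at $z=1$, uniform continuity on an annulus to handle $r_n\to 1$, and offset-independence of equispaced Riemann sums), all of which are accurate.
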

\begin{proof} For $f\in C_c(K)$, $K$ a compact disc,
$$
\int_{\bC} f\,\mathrm{d}\mu_{n-1}=\frac{1}{n}\left(\sum_{i=0}^{n-1}f(M^{-1}(b_n\epsilon_n^{i}))\right) \to\frac{1}{2\pi} \int_0^{2\pi} f\left(M^{-1}\left(e^{it}\right)\right)\mathrm{d}t,
$$
where the last integral is just $ \int_{\mathbb C}f(M^{-1}(z))\,\mathrm{d}\theta$. Now the result follows from (\ref{eq:2measures}). 

\end{proof}

\begin{example} 
\label{example:realline}
Suppose that $S=\{\pm i\}$. Then $V_{12}$ is the real axis and $\mu_{_S}=\frac{1}{\pi}\frac{1}{1+t^2}\diff t$ where $\diff t$ is Lebesgue measure in $\mathbb R$. Clearly $\mu_{_S}$ is a probability measure. If furthermore $Q(z)=\frac{1}{1+z^2}$, the zeroes of $Q^{(n)}(z)$ are $z=\cot\frac{k\pi}{n+1},\ k=1,\dotsc,n$ (see P\'olya \cite{Po1}).
In the upper half-plane $\Psi(z)=\log\vert z+i\vert $,
and in the lower half-plane $\Psi(z)=\log\vert z-i\vert $.
\end{example}

\section{Proof of the theorem}
As we have noted earlier, it is enough to prove the $L^1_{loc}$-convergence $L_n\to \Psi $ in Theorem 1.1 (ii) on a compact set $K$. We give an overview of the proof.
First we describe the derivatives $Q^{(n)}$ of the rational function $Q$, in section $4.1$, in particular the role that the largest pole plays (Lemma 4.1).  We also need to estimate the degree of the nominator $R_n$ of $Q^{(n)}$ and the highest coefficient $\alpha_n$ (Lemma 4.2-3). This is trivial in the generic case. Then we easily get uniform convergence on compact subsets of open Voronoi cells in Lemma 4.4 and Proposition 4.5.  This proves convergence a.e. In order to also prove $L^1_{loc}$-convergence it essentially only remains to note that the zeroes of $R_n$, where $L_n$ has singularities, do not grow too quickly to infinity (Lemma 4.6), and once this is done, to estimate the contribution of each of these zeroes to the integral of $L_n$ on a small neighbourhood of the Voronoi diagram. This is done in 4.4.2. Finally we note that $\Psi$ is actually the logarithmic potential of $\mu$, and describe the case of a single pole.

\subsection{Derivatives of a rational function}
A rational function with poles $z_1,\dotsc, z_d$, may be decomposed into polar parts as
\begin{equation*}
Q= P_0(z)+\frac{P_1(z)}{(z-z_1)^{r_1}}+\dotsb+\frac{P_d(z)}{(z-z_d)^{r_d}},
\end{equation*}
where $P_i\in \bC [z]$ has degree $s_i$ strictly less than $r_i,\ i=1,\dotsc,d$. Clearly $P_0(z)$ will not play any role for the asymptotic properties of $Q^{(n)}$, so we may assume $P_0(z)=0$. Write
$P_i(z)=\sum_{j=0}^{s_i}a_{i,r_i-j}(z-z_i)^{j}$ (with $a_{i,r_i}\neq 0$), so that
$$Q_i:=\frac{P_i(z)}{(z-z_i)^{r_i}}=\frac{a_{i,r_i-s_i}}{(z-z_i)^{r_i-s_i}}+\dotsb+\frac{a_{i,r}}{(z-z_i)^{r_i}}.$$
Hence
\begin{equation}
\label{eq:polarpart1}
{Q_i^{(n)}}=\sum_{j=1}^{r_i}\frac{a_{i,j}(-1)^n(j)_n}{(z-z_i)^{j+n}},
\end{equation}
where $(j)_n=j(j+1)\cdots(j+n-1)$  is the (ascending) Pochhammer symbol (as used in the theory of special functions).
Below we will need the following property:
\begin{equation}
\label{eq:factorial}
(j)_n=j(j+1)\cdots(j+n-1)=n!\frac{(n+1)\cdots(n+j-1)}{(j-1)!} =:n!p_j(n),
\end{equation}
($j\geq 1$) where $p_j(n)$ is a polynomial of degree $j-1$ in n.

The most singular term of the polar part will dominate the rest of $Q_i^{(n)},$ in the following way.
\begin{lemma}
\label{lemma:polarpart}
\begin{equation}
\label{eq:polarpart}
{Q_i^{(n)}}=\frac{(-1)^n(r_i)_na_{i,r_i}}{(z-z_i)^{r_i+n}}(1+S_i^n(z)),
\end{equation}
where the polynomial $S_i^n(z)\to 0$, uniformly on compact sets $K\subset \bC$, as $n\to \infty.$
\end{lemma}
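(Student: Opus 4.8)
The plan is to extract the most singular term from the Pochhammer-expanded polar part \eqref{eq:polarpart1} and show the remaining terms are lower order. Concretely, factor out $\frac{(-1)^n(r_i)_n a_{i,r_i}}{(z-z_i)^{r_i+n}}$ from \eqref{eq:polarpart1}, which gives
$$S_i^n(z)=\sum_{j=1}^{r_i-1}\frac{a_{i,j}}{a_{i,r_i}}\cdot\frac{(j)_n}{(r_i)_n}\cdot (z-z_i)^{r_i-j},$$
where the sum is only over $j<r_i$ (actually over those $j$ with $a_{i,j}\neq 0$, which start at $j=r_i-s_i$), and in particular $S_i^n$ really is a polynomial in $z$ of degree at most $r_i-1$ with no constant issues since $r_i-j\ge 1$.

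The key point is the ratio of Pochhammer symbols. Using \eqref{eq:factorial}, write $(j)_n=n!\,p_j(n)$ and $(r_i)_n=n!\,p_{r_i}(n)$, so that $(j)_n/(r_i)_n=p_j(n)/p_{r_i}(n)$ is a ratio of polynomials in $n$ of degrees $j-1$ and $r_i-1$ respectively. Since $j\le r_i-1<r_i$, the denominator has strictly larger degree, hence $(j)_n/(r_i)_n\to 0$ as $n\to\infty$ for each fixed $j$ in the sum. Therefore each term of $S_i^n(z)$ is a fixed polynomial $(z-z_i)^{r_i-j}$ times a scalar tending to $0$; on any compact set $K$ the polynomial factor is bounded, so each term tends to $0$ uniformly on $K$, and since there are only finitely many terms ($j$ ranges over a finite set independent of $n$), the whole sum $S_i^n(z)\to 0$ uniformly on $K$.

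I do not expect any real obstacle here: the statement is essentially a bookkeeping consequence of \eqref{eq:polarpart1} and \eqref{eq:factorial}. The only point requiring a sentence of care is that the summation index in $S_i^n$ genuinely stays strictly below $r_i$ — this is exactly where the normalization $a_{i,r_i}\neq 0$ (the most singular coefficient is nonzero) is used, so that dividing through is legitimate and no term with $j=r_i$ survives in $S_i^n$. One should also remark that the degree bound on $S_i^n$ is uniform in $n$, which is what makes "uniformly on compact sets" unambiguous, and note that if some $P_i$ is constant (i.e. $s_i=0$) then $S_i^n\equiv 0$ and there is nothing to prove for that pole.
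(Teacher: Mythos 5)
Your proposal is correct and follows essentially the same route as the paper: factor out the most singular term so that $S_i^n(z)=\sum_{j<r_i}\frac{a_{i,j}}{a_{i,r_i}}\frac{(j)_n}{(r_i)_n}(z-z_i)^{r_i-j}$, and observe that the Pochhammer ratio $(j)_n/(r_i)_n\to 0$ for $j<r_i$, whence the finitely many coefficients tend to zero and the convergence is uniform on compacts. Your use of the polynomial form $(j)_n=n!\,p_j(n)$ from (\ref{eq:factorial}) versus the paper's direct factorial-quotient computation is only a cosmetic difference.
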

\begin{proof}
It follows from (\ref{eq:polarpart1}) that the coefficients of the polynomial 
$$S_i^n(z)=\sum_{j=1}^{r_i-1}\frac{a_{i,j}(j)_n}{a_{i,r_i}(r_i)_n}(z-z_i)^{r_i-j}$$ tend uniformly to $0$ with increasing $n$, since
$$\lim_{n\to\infty}\frac{(j)_n}{(r_i)_n}= \lim_{n\to\infty} \frac{r_i!}{j!}\cdot   \frac{(n+j-1)!}{(n+r_i-1)!}=0,$$
if $j<r_i$. This implies the lemma.
\end{proof}

\subsection{Number of zeroes}
Let $Q$ be as above. Its derivative equals 
\begin{equation}
\label{eq:ratfuncdeg}
{Q^{(n)}}=\frac{\alpha_nR_n}{PP_0^n},
\end{equation}
where $P=\prod_{i=1}^d(z-z_i)^{r_i}$, $P_0=\prod_{i=1}^d(z-z_i)$, $\alpha_n\in\bC$ and $R_n\in \bC[z]$ is a monic polynomial. Let $r:=\deg P=r_1+\dotsb+r_d$. Note that $GCD(R_n,PP_0^n)=1$, by the fact that the $Q_i^{(n)},\ i=1,\dotsc,d$ have relatively prime denominators.
For generic $Q$ the degree of $R_n$ is $n(d-1)+r-1$. In general the asymptotic result below holds. 

\begin{lemma}
\label{lemma:degrn}
\begin{equation}
\label{eq:degrn}
\lim_{n\to\infty}\frac{\deg R_n}{n}=d-1.
\end{equation}
\end{lemma}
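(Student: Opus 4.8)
The plan is to add up the polar parts. Write $Q^{(n)} = \sum_{i=1}^d Q_i^{(n)}$ and, using Lemma~\ref{lemma:polarpart}, factor out the common denominator $P_0^n = \prod_{i=1}^d (z-z_i)^n$ together with $P = \prod_i (z-z_i)^{r_i}$. Each $Q_i^{(n)}$ contributes a numerator of the form $(-1)^n (r_i)_n a_{i,r_i} (z-z_i)^{r_i+n}(1+S_i^n(z)) \cdot \prod_{k\ne i}(z-z_k)^{-r_k-n}$; clearing denominators, the numerator of $Q^{(n)}$ (before extracting the monic part and the constant $\alpha_n$) is
\[
\sum_{i=1}^d (-1)^n (r_i)_n a_{i,r_i} (z-z_i)^{r_i+n}(1+S_i^n(z)) \prod_{k\ne i}(z-z_k)^{r_k}.
\]
Here $(z-z_i)^{r_i+n}\prod_{k\ne i}(z-z_k)^{r_k}$ has degree $(r_i+n) + (r - r_i) = n + r$, and the $S_i^n$ terms have strictly smaller degree in each summand (degree at most $n + r - 1$ after the shift by $r_i - j \ge 1$ inside $S_i^n$). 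So the top-degree coefficient of the $i$-th summand, in degree $n+r$, is $(-1)^n (r_i)_n a_{i,r_i}$.

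The key point is then a \emph{cancellation count} at the top. Summing over $i$, the coefficient of $z^{n+r}$ in the full numerator is $(-1)^n \big(\sum_{i=1}^d (r_i)_n a_{i,r_i}\big)$, and more generally the coefficient of $z^{n+r-\ell}$ is $(-1)^n$ times a fixed $\mathbb{C}$-linear combination of the $(r_i)_n a_{i,r_i}$ with coefficients that are polynomials in the $z_k$ (independent of $n$), \emph{plus} lower-order-in-$n$ corrections coming from the $S_i^n$ terms. Now invoke the factorial identity~\eqref{eq:factorial}: $(r_i)_n = n!\, p_{r_i}(n)$ with $p_{r_i}$ of degree $r_i - 1$. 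Set $r_{\max} := \max_i r_i$; after dividing the whole numerator by $n!$ we obtain, in degree $n+r-\ell$, a polynomial in $n$ of degree at most $r_{\max}-1$ whose coefficients are fixed (the $S_i^n$ corrections are $o(1)$ and do not affect the leading behaviour in $n$). The degree of $R_n$ is $n + r - \ell_n$ where $\ell_n$ is the smallest $\ell$ for which this degree-$(r_{\max}-1)$-in-$n$ polynomial is not identically zero as a function of $n$ for large $n$. Because $\ell_n$ is bounded (it cannot exceed $r-1$ plus a constant, since the numerator is not identically zero and the total degree drop is controlled — explicitly, the lowest surviving term is governed by the unique index $i_0$ with $r_{i_0} = r_{\max}$ in the generic case, or a short combinatorial argument with ties), we get $|\deg R_n - n - r| \le C$ for a constant $C$ independent of $n$, and dividing by $n$ and letting $n \to \infty$ yields $\lim \deg R_n / n = 1$...

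Wait — that gives the wrong constant. Let me reconsider: the issue is that a single pole $z_i$ of order $n+r_i$ in the denominator sits over $P_0^n P$, so the denominator $PP_0^n$ has degree $r + nd$, while the numerator $\alpha_n R_n$ has degree $\deg R_n$; since $Q^{(n)} \to 0$ at infinity like $z^{-(r+n)}$ (differentiating $n$ times a function vanishing like $z^{-1}$ at $\infty$... actually $Q \sim c/z$ only if $\deg R < \deg P$, i.e. $Q$ vanishes at infinity, giving $Q^{(n)} \sim c\, z^{-1-n}$), we need $\deg(PP_0^n) - \deg R_n = n + r$, hence $\deg R_n = (r + nd) - (n+r) = n(d-1)$. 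So the correct bookkeeping is: $\deg R_n = \deg(PP_0^n) - (\text{order of vanishing of } Q^{(n)} \text{ at } \infty)$. The first I would pin down exactly ($= nd + r$); the second is where the work lies, and it equals $n + r - (\ell_n)$ adjusted — no: $Q^{(n)}(z) = O(z^{-1-n})$ always (generically exactly $z^{-1-n}$), so $\deg R_n = nd + r - (n+1+\text{extra})$ where "extra" $\ge 0$ measures additional vanishing. The claim is that "extra" stays bounded. Concretely, I would argue: the meromorphic function $z^{n+1} Q^{(n)}(z)$ tends to a nonzero constant iff $Q$ doesn't vanish faster than $1/z$; in general $z Q(z) \to c_1 := \sum_i a_{i,r_i}[r_i=1] + \dots$, and the order of vanishing of $Q$ at $\infty$ is some fixed integer $m \ge 1$ depending only on $Q$, whence $Q^{(n)}$ vanishes to order exactly $m + n$, giving $\deg R_n = nd + r - (n+m) = n(d-1) + (r - m)$ for all large $n$. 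Thus in fact $\deg R_n / n \to d-1$ with the stronger statement that $\deg R_n = n(d-1) + (r-m)$ eventually.

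\medskip

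\noindent\textbf{Revised plan (cleaner).} Forget the coefficient-chase; use the behaviour at infinity. (1) Show $Q(z)$ vanishes at $\infty$ to some finite order $m \ge 1$ depending only on $Q$ (since $Q = R/P$ with $\gcd(R,P)=1$ and, after removing $P_0(z)$, $\deg R < \deg P$, so $m = \deg P - \deg R \ge 1$). (2) Each differentiation increases the order of vanishing at $\infty$ by exactly $1$ for a rational function vanishing at infinity — check via the polar decomposition: $\frac{d}{dz}\frac{c}{(z-a)^k} = \frac{-ck}{(z-a)^{k+1}}$, so termwise the order goes from $k$ to $k+1$ and there is no cancellation of the top term across different poles because... hmm, there could be: $\frac{1}{z-z_1} - \frac{1}{z-z_2} = \frac{z_1 - z_2}{(z-z_1)(z-z_2)}$ vanishes to order $2$, not $1$. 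So the order of vanishing can jump. (3) Therefore the safe statement is: order of vanishing of $Q^{(n)}$ at $\infty$ lies in $[n+1,\ n + r]$ — lower bound because each term $Q_i^{(n)}$ individually vanishes to order $r_i + n \ge n+1$, and these cannot all cancel (distinct poles), upper bound because the least singular behaviour is $z^{-(n+1)}$ from a simple pole or $z^{-(n+ r_{\min})}$... in any case it is between $n+1$ and $n+r$. Hence $nd + r - (n+r) \le \deg R_n \le nd + r - (n+1)$, i.e. $n(d-1) \le \deg R_n \le n(d-1) + r - 1$, and dividing by $n$ and sending $n\to\infty$ gives~\eqref{eq:degrn}.

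\medskip

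The main obstacle is step~(3): proving the numerator's top terms do not all cancel, i.e. that $Q^{(n)}$ does not vanish to order $\ge n + r + 1$ at infinity. This is where I would be most careful. One clean way: $Q^{(n)}(z) = \frac{\alpha_n R_n(z)}{P(z) P_0(z)^n}$; the denominator has degree $nd + r$ and $Q^{(n)}$ is visibly nonzero (it has poles at each $z_i$), so $R_n$ is a nonzero polynomial and $\deg R_n \le nd + r - 1$ trivially; the nontrivial inequality is the lower bound $\deg R_n \ge n(d-1)$. For that, consider the Laurent expansion of $Q^{(n)}$ at $\infty$: the coefficient of $z^{-(n+1)}$ is $(-1)^n \frac{(n)_0 \cdot(\text{stuff})}{\dots}$ — simplest is to note $Q^{(n)}(z) \sim \left(\sum_i \text{(leading Laurent coeff of } Q_i^{(n)}\text{)}\right)$, and by Lemma~\ref{lemma:polarpart} the leading behaviour of $Q_i^{(n)}$ at \emph{infinity} is $(-1)^n (r_i)_n a_{i,r_i} z^{-(r_i+n)}$; choosing the index (or indices) $i$ with the smallest $r_i$, call it $\rho := \min_i r_i$, the coefficient of $z^{-(n+\rho)}$ in $Q^{(n)}$ is $(-1)^n (\rho)_n \sum_{i : r_i = \rho} a_{i,r_i}$ plus contributions from larger-$r_i$ poles that only show up at order $\ge n+\rho+1$ — so it is nonzero provided $\sum_{i:r_i=\rho} a_{i,r_i} \ne 0$. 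If that sum happens to vanish one peels off finitely many more Laurent coefficients (each a fixed nonzero-for-large-$n$ linear combination times a Pochhammer factor, by the argument with~\eqref{eq:factorial}); since only finitely many such peelings are possible before exhausting the (finitely many, $n$-independent in pattern) obstructions, the order of vanishing is $n + O(1)$, which is all that~\eqref{eq:degrn} needs. Granting Lemma~\ref{lemma:polarpart} and identity~\eqref{eq:factorial}, everything else is routine estimation, so I would present (1)–(3) compactly and flag the cancellation bound in (3) as the only real content.
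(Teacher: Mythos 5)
Your overall strategy --- translate the lemma into a statement about the order of vanishing of $Q^{(n)}$ at infinity and show that this order is $n+O(1)$ --- is sound, and it is in substance the same reduction the paper makes in Step 1 of its proof (there via the substitution $y=1/z$, writing $Q^{(n)}(1/y)=n!\,y^nS(y,n)$ and bounding the order of the zero of $S(y,n)$ at $y=0$ uniformly in $n$). The genuine gap is in your step (3), which you yourself flag as the only real content. First, Lemma \ref{lemma:polarpart} cannot be invoked there: the factor $1+S_i^n(z)$ tends to $1$ only uniformly on \emph{compact} sets, so it says nothing about behaviour at infinity; in fact the dominant term of $Q_i^{(n)}$ at infinity comes from the \emph{least} singular component of the polar part (the smallest $j$ with $a_{i,j}\neq 0$ in (\ref{eq:polarpart1})), not from $a_{i,r_i}$. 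Consequently your formula for the coefficient of $z^{-(n+\rho)}$ with $\rho=\min_i r_i$ is wrong: every pole contributes to that coefficient through its components $a_{i,j}$ with $j\le\rho$, not only the poles with $r_i=\rho$. Second, the fallback ``peel off finitely many Laurent coefficients; only finitely many peelings are possible since the obstructions are $n$-independent in pattern'' is precisely the assertion that needs proof --- a priori the set of vanishing leading coefficients could change with $n$ --- and it is exactly what the paper establishes in its Step 2 by showing that the coefficients $b_i(n)$ are polynomials in $n$, hence nonzero for all large $n$.

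Ironically, the cleanest repair is the idea you discarded in step (2). Differentiation of a rational function vanishing at infinity raises its order of vanishing at infinity by exactly one: if $Q=\sum_{k\ge m}\gamma_k z^{-k}$ with $\gamma_m\neq 0$, $m\ge 1$ (the polynomial part being removed as in the paper), then termwise differentiation of the Laurent series at infinity gives $Q^{(n)}=\sum_{k\ge m}(-1)^n(k)_n\gamma_k z^{-(k+n)}$, whose leading coefficient $(-1)^n(m)_n\gamma_m$ is nonzero, so $\mathrm{ord}_\infty Q^{(n)}=n+m$ exactly. Your example $\frac{1}{z-z_1}-\frac{1}{z-z_2}$ does not contradict this: it only shows that $m$ can exceed the minimal order suggested by the individual polar parts, which is harmless. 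Since $Q$ has denominator of degree $r$, one has $1\le m\le r$, and comparing degrees in (\ref{eq:ratfuncdeg}) gives the exact formula $\deg R_n=(r+nd)-(n+m)=n(d-1)+(r-m)$ for all $n$, which is stronger than (\ref{eq:degrn}) and shorter than the paper's argument. As written, however, your justification of the crucial upper bound on the vanishing order does not hold up, so the proof is incomplete until that step is replaced by an argument of this kind (or by the paper's polynomial-in-$n$ coefficient argument).
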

\begin{proof}
Clearly, from (\ref{eq:polarpart1}) 
\begin{equation}
\label{eq:firsteq}
\max_{i=1,\dotsc,d} \left\{ \sum_{j\neq i} (n+r_j)+r_i-1\right\}=(d-1)n+r-1\geq
 \deg R_n,
\end{equation}
and with generic coefficients of the rational function this will be an equality. Our proof of the general case proceeds in two steps.

{\bf Step 1.} Change variable to $y=1/z$. Then using (\ref{eq:polarpart1}), the fact that $r\geq r_i$, and (\ref{eq:factorial}) we can rewrite
\begin{equation*}
Q^{(n)}(1/y)=\sum_{i=1}^d\frac{S_i(y,n)y^n}{(1-z_iy)^{n+r}}:=n!y^n S(y,n)
\end{equation*}
where $S_i(y,n), \ i=1,\dotsc,d$ are polynomials in the variables $y$ and $n$. For each $n\in \mathbb Z$, $S(y,n)$ is a rational function in $y$, and it suffices to find a uniform bound (in $n$) of the order $o(S(y,n))$ of the zero of $S(y,n)$ at $y=0$.
For if $o(S(y,n)) \leq C$, then by (\ref{eq:ratfuncdeg})
\begin{eqnarray*}
&&\deg_{z} P_0P^n-\deg_{z} R_n=o(S(y,n)) +n 
 \implies \\
&&\deg_{z} R_n=r+nd-n-o(S(y,n)) \geq (d-1)n+ (r-C).
\end{eqnarray*}
This together with the upper bound (\ref{eq:firsteq}), implies the lemma.

{\bf Step 2.} Note that the highest non-zero coefficient of $R_n(z)$ is the lowest non-zero coefficient of $Q_n(1/y)$,  expanded as a power series in $y$, and, in addition, equals $n!b_s$, where $S(y,n)=\sum_{i=s}^\infty b_i(n)y^i$ and $ b_s(n)\neq 0$. 
The coefficients $b_i(n)$ are easily checked to be polynomials in $n$---this follows from Newton's binomial theorem, applied to the denominators---and thus have only a finite number of zeroes. Hence there exists an $N$ such that
$b_s(n)$ is non-zero for $n\geq N$, and consequently $o(S(y,n))=s$ if $n\geq N$. By the previous step we are done.
\end{proof}
The following consequence of the proof will be needed in the next section.
\begin{lemma} As  $n\to\infty$,
$$\frac{\log \vert n! /\alpha_n\vert }{n}\to 0.$$
\label{lemma:highest coefficient}
\end{lemma}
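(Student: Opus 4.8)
The plan is to track the highest non-zero coefficient $\alpha_n$ of $Q^{(n)}$ through the variable change $y = 1/z$ used in the proof of Lemma~\ref{lemma:degrn}, and show that $\alpha_n$ differs from $n!$ only by a factor that grows sub-exponentially. Recall from Step~2 of that proof that, writing $Q^{(n)}(1/y) = n! \, y^n S(y,n)$ with $S(y,n) = \sum_{i=s}^\infty b_i(n) y^i$ and $b_s(n) \neq 0$ for $n \geq N$, the highest non-zero coefficient of $R_n$ (which is monic) corresponds to the lowest term $n! \, b_s(n) y^{n+s}$. Comparing with ${Q^{(n)}} = \alpha_n R_n / (PP_0^n)$, and matching the leading behaviour of both sides under $z \to \infty$ (equivalently $y \to 0$), one gets an identity of the shape $\alpha_n = n!\, b_s(n)$ up to a constant depending only on the fixed polynomials $P$, $P_0$ and on $s$ — all of which are bounded in $n$ (note $s$ is eventually constant in $n$ by Step~2). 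Hence $\log|n!/\alpha_n|/n = -\log|b_s(n)|/n + O(1/n)$.

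First I would make the matching identity precise: expand $PP_0^n$ near $z = \infty$, i.e. $P(1/y)P_0(1/y)^n = y^{-(r + nd)}\prod(1 - z_i y)^{\bullet}$, and read off that the top coefficient of $R_n$ being $1$ forces $\alpha_n$ to equal $n! \, b_s(n)$ times a product of the form $\prod_i(\text{unit})$ coming from the $(1-z_iy)$ factors evaluated at $y=0$, which is exactly $1$. So in fact $\alpha_n = n!\, b_s(n)$ on the nose for $n \geq N$. Then the whole statement reduces to showing $\log|b_s(n)|/n \to 0$. Since the $b_i(n)$ are polynomials in $n$ (established in Step~2 via Newton's binomial theorem applied to the denominators), and $b_s(n) \neq 0$ for $n \geq N$, we have $b_s(n) = \text{(polynomial in }n\text{, not identically zero)}$, so $|b_s(n)|$ grows at most polynomially in $n$ and is bounded below away from $0$ in the following weak sense: a nonzero polynomial satisfies $|b_s(n)| \geq c\, n^{-M}$ for all large $n$ is false in general, but it does satisfy $\log|b_s(n)| = O(\log n)$, which is all we need — and for the lower bound, $b_s(n)$ being a nonzero polynomial means $|b_s(n)| \to \infty$ (if $\deg > 0$) or is a nonzero constant, so in either case $\log|b_s(n)| \geq -C$ for $n$ large. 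Either way $|\log|b_s(n)|| = O(\log n) = o(n)$, giving the claim.

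The main obstacle — really the only place one must be careful — is the bookkeeping in the matching identity: one must verify that no hidden factor depending exponentially on $n$ sneaks in when equating ${Q^{(n)}} = \alpha_n R_n/(PP_0^n)$ with $n! y^n S(y,n)$, i.e. that the powers of $y$ (equivalently of $z$) and the unit factors $\prod(1-z_iy)$ all cancel cleanly. This is the same computation already carried out implicitly in Step~1–2 of the proof of Lemma~\ref{lemma:degrn}, where $\deg R_n = r + nd - n - s$ was derived; tracking the constant (rather than just the degree) through that same computation yields $\alpha_n = n!\,b_s(n)$, and the rest is the elementary observation that a nonzero polynomial in $n$ has logarithm of size $O(\log n)$.
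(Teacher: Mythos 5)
Your proposal is correct and follows essentially the same route as the paper: both identify $\alpha_n=n!\,b_s(n)$ via the $y=1/z$ substitution from the proof of Lemma~\ref{lemma:degrn} and then use that $b_s(n)$ is a not-identically-zero polynomial in $n$, so $\log\vert b_s(n)\vert=O(\log n)=o(n)$. Your extra bookkeeping confirming that the unit factors $(1-z_iy)$ contribute no hidden $n$-dependent factor is exactly what the paper leaves implicit, and the small aside about lower bounds is harmless since for large $n$ a nonzero polynomial is bounded away from $0$.
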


\begin{proof} As noted in the proof of the preceding lemma, the highest non-zero coefficient $\alpha_n$ of $R_n(z)$ is the lowest non-zero coefficient $n!b_s$ of the power series of $S(y,n)$. Write $b_s(n)=\sum_{i=0}^ka_in^i=a_kn^{k}\tilde b_s(n)$, with $a_k\neq 0$, and note that $\tilde b_s(n)\to 1$ as $n\to \infty$.
This implies the lemma.
\end{proof}

\subsection{Uniform convergence almost everywhere of the logarithmic potential} P\'olya \cite[Behauptung b),  p. 41]{Po1} gives
the following lemma (which we only prove for rational functions). We will use $V_i^o$ to denote an open Voronoi cell $V_i$.
 \begin{lemma}
  \label{lemma:polya} Let $Q(z)$ be a meromorphic function. Then the sequence $\left\vert\frac{Q^{(n)}(z)}{n!}\right\vert^{\frac{1}{n}}$ converges to
  $\vert z-z_i\vert^{-1}$ pointwise in $V_i^o\setminus\{z_i\}$. The convergence is uniform on compact subsets $W\subset V_i^o\setminus\{z_i\}$.
\end{lemma}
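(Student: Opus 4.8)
The plan is to reduce the statement to the decomposition of $Q$ into polar parts and the domination lemma (Lemma~\ref{lemma:polarpart}). First I would write $Q = \sum_{i=1}^d Q_i$ (absorbing the polynomial part $P_0$, which is killed by enough differentiations), so that $Q^{(n)} = \sum_{i=1}^d Q_i^{(n)}$. Fix an open Voronoi cell $V_i^o$ and a compact subset $W \subset V_i^o \setminus \{z_i\}$. On $W$, the distance $|z - z_i|$ is bounded away from both $0$ and from $|z - z_j|$ for every $j \neq i$; more precisely, there is a constant $\rho = \rho(W) < 1$ such that $|z - z_i| \le \rho\, |z - z_j|$ for all $z \in W$ and all $j \neq i$. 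This strict comparison is exactly the content of being in the interior of the cell closest to $z_i$.

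Next I would use Lemma~\ref{lemma:polarpart} to write, for each $j$,
$$Q_j^{(n)}(z) = \frac{(-1)^n (r_j)_n a_{j,r_j}}{(z - z_j)^{r_j + n}}\,\bigl(1 + S_j^n(z)\bigr),$$
with $S_j^n \to 0$ uniformly on $W$. Using $(r_j)_n = n!\, p_{r_j}(n)$ from \eqref{eq:factorial}, with $p_{r_j}$ a polynomial in $n$, we get
$$\frac{Q^{(n)}(z)}{n!} = \frac{(-1)^n}{(z - z_i)^n}\Biggl[ \frac{p_{r_i}(n)\, a_{i,r_i}}{(z-z_i)^{r_i}}\bigl(1 + S_i^n(z)\bigr) + \sum_{j \neq i} p_{r_j}(n)\, a_{j,r_j}\,\frac{(z - z_i)^n}{(z - z_j)^{r_j + n}}\bigl(1 + S_j^n(z)\bigr)\Biggr].$$
On $W$ the bracketed quantity is a finite sum: the $j = i$ term has modulus bounded above and below by constants times $|p_{r_i}(n)|$ (since $1 + S_i^n \to 1$ uniformly), while each $j \neq i$ term is bounded in modulus by a constant times $|p_{r_j}(n)|\,\rho^{\,n}$, because $|(z-z_i)/(z-z_j)|^n \le \rho^n$ and the remaining factor $|z-z_j|^{-r_j}|1+S_j^n(z)|$ is bounded on $W$. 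Since the $p_{r_j}(n)$ grow only polynomially, the off-diagonal terms decay geometrically and are negligible. Hence there are constants $0 < c_1 \le c_2$ (depending on $W$) and an integer $N$ such that for $n \ge N$ and all $z \in W$,
$$c_1\, |p_{r_i}(n)| \;\le\; \Bigl| \frac{Q^{(n)}(z)}{n!} \Bigr|\, |z - z_i|^n \;\le\; c_2\, |p_{r_i}(n)|.$$
Taking $n$-th roots, using $c_k^{1/n} \to 1$ and $|p_{r_i}(n)|^{1/n} \to 1$ (a polynomial in $n$), we conclude that $\bigl|Q^{(n)}(z)/n!\bigr|^{1/n}\,|z-z_i| \to 1$ uniformly on $W$, which is the claim; pointwise convergence on $V_i^o \setminus \{z_i\}$ follows by covering with such compacta.

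The main obstacle is purely bookkeeping: one must be careful that the lower bound $c_1 |p_{r_i}(n)| > 0$ genuinely holds, i.e. that the dominant $j=i$ term does not get cancelled. This is ensured because the $j = i$ term is bounded below by a fixed positive constant times $|p_{r_i}(n)|$ on $W$ (as $a_{i,r_i} \neq 0$, $|z - z_i|^{-r_i}$ is bounded below on $W$, and $|1 + S_i^n(z)| \ge \tfrac12$ for large $n$), while the sum of the remaining terms is $O(\rho^n)$ uniformly and hence eventually smaller than half of the $j=i$ contribution. Everything else is the elementary observation that constants and polynomials in $n$ have $n$-th root tending to $1$, which is what makes P\'olya's exponential-scale limit insensitive to all the lower-order data.
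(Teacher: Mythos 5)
Your proof is correct and takes essentially the same route as the paper's: decompose $Q$ into polar parts, invoke Lemma~\ref{lemma:polarpart} together with the uniform ratio bound $\vert z-z_i\vert\le \rho\vert z-z_j\vert$, $\rho<1$, on compact subsets of the open cell to show the $j\neq i$ terms are geometrically negligible, and then take $n$-th roots using that polynomials in $n$ have $n$-th root tending to $1$. Your explicit two-sided bound $c_1\vert p_{r_i}(n)\vert\le \vert Q^{(n)}/n!\vert\,\vert z-z_i\vert^{n}\le c_2\vert p_{r_i}(n)\vert$ merely spells out the non-cancellation of the dominant term that the paper leaves implicit.
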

\begin{proof} For $Q$ a rational function, this is a simple consequence of Lemma \ref{lemma:polarpart}. By the definition of an open cell $V_i^o$, there is a $C<1$ such that
$$
z\in V_i^o\implies \frac{\left\vert (z-z_j)^{-1}\right\vert }{\left\vert (z-z_i)^{-1}\right\vert }<C, \ j\neq i,$$ 
and furthermore there is for any fixed compact $W\subset V_i^o\setminus\{z_i\}$ a $C<1$ that works for any $z\in W$. Since furthermore $(r_j)_n/(1)_n$ is a non-zero polynomial of fixed degree in $n$, it is clear that $((r_j)_n/(1)_n)C^n\to 0$ when $n\to\infty$. Hence by Lemma \ref{lemma:polarpart}, if $j\neq i$, and $z\in W$,
$$
\left\vert\frac{Q_j^{(n)}(z)}{(1)_n(z-z_i)^{-(r_i+n)}}\right\vert\leq M\frac{(r_j)_n}{(1)_n}C^{n+r_i}\to 0,
$$
when $n\to \infty$, where $M$ is the maximal value of $\vert a_{j,r_j}(z-z_j)^{r_i-r_j}\vert $ in $W$. Since $(1)_n=n!$,  another application of Lemma \ref{lemma:polarpart}, this time to $Q_i(z)$, gives that $$
\lim_{n\to\infty}\left\vert \frac{Q_i^{(n)}(z)}{n!(z-z_i)^{-(r_i+n)}}\right\vert^{1/n}=1$$
and this convergence is clearly also uniform in $W$. Since $\lim_{n\to\infty}\left\vert (z-z_i)^{-(r_i+n)}\right\vert^{1/n}=\vert z-z_i\vert^{-1}$, this implies the lemma.
\end{proof}
In particular there is only a finite number of zeroes of any $Q^{(n)}$ in a compact subset of an open cell of the Voronoi diagram.

 \begin{proposition}
  \label{lemma:conv.a.e} Let $L_n(z)=\log\vert R_n(z)\vert/\deg R_n$ be the logarithmic potential of the probability measure  $\mu_n$ associated with $R_n$. Then for any $z$ in the interior of the Voronoi cell $V_i^o$ we have pointwise convergence
  $$\lim_{n\to \infty}L_n(z) = (d-1)^{-1}\left(\log |P_0| + \smash{\displaystyle\max_{i=1,\dotsc,d}} \{\log |z-z_i|^{-1}\}\right)=:\Psi(z).$$
  The convergence is uniform on compact subsets of $V_i^o$.
\end{proposition}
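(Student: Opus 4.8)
The plan is to combine Lemma~\ref{lemma:polya} with the degree asymptotics of Lemma~\ref{lemma:degrn} and the coefficient control of Lemma~\ref{lemma:highest coefficient}. The point is to rewrite $L_n(z) = \log|R_n(z)|/\deg R_n$ in terms of the quantity $|Q^{(n)}(z)/n!|^{1/n}$ that Lemma~\ref{lemma:polya} controls. From the factorization \eqref{eq:ratfuncdeg}, $Q^{(n)} = \alpha_n R_n/(PP_0^n)$, so
\begin{equation*}
\log|R_n(z)| = \log|Q^{(n)}(z)| - \log|\alpha_n| + \log|P(z)| + n\log|P_0(z)|.
\end{equation*}
First I would divide through by $\deg R_n$, add and subtract $\log|n!|$ to pair $\log|Q^{(n)}|-\log|n!|$ together, and then handle the four resulting groups of terms separately, on a fixed compact set $W\subset V_i^o$ (or a point $z\in V_i^o$ for the pointwise statement).

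The key steps, in order, are as follows. (a) The term $\bigl(\log|Q^{(n)}(z)|-\log|n!|\bigr)/\deg R_n$: write it as $\frac{n}{\deg R_n}\cdot\frac{1}{n}\log|Q^{(n)}(z)/n!|$. By Lemma~\ref{lemma:polya}, $\frac{1}{n}\log|Q^{(n)}(z)/n!|\to \log|z-z_i|^{-1}$ uniformly on $W$, and by Lemma~\ref{lemma:degrn}, $n/\deg R_n\to 1/(d-1)$; hence this group converges uniformly to $(d-1)^{-1}\log|z-z_i|^{-1}$. (b) The term $\bigl(\log|n!|-\log|\alpha_n|\bigr)/\deg R_n = \frac{n}{\deg R_n}\cdot\frac{\log|n!/\alpha_n|}{n}\to \frac{1}{d-1}\cdot 0 = 0$ by Lemma~\ref{lemma:highest coefficient}. (c) The term $\log|P(z)|/\deg R_n\to 0$ uniformly on $W$, since $\log|P|$ is bounded on the compact set $W$ (which avoids the poles $z_j$) and $\deg R_n\to\infty$. (d) The term $n\log|P_0(z)|/\deg R_n = \frac{n}{\deg R_n}\log|P_0(z)|\to (d-1)^{-1}\log|P_0(z)|$ uniformly on $W$, again because $\log|P_0|$ is bounded there. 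Adding the four limits gives exactly $(d-1)^{-1}\bigl(\log|P_0(z)| + \log|z-z_i|^{-1}\bigr)$, which equals $\Psi(z)$ on $V_i^o$ since $i$ is the index realizing the maximum there. The uniform convergence on compact subsets of $V_i^o$ follows because each of the four convergences above was established uniformly on $W$.

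A minor bookkeeping point to address: one must exclude a neighbourhood of the pole $z_i$ itself — but $z_i$ lies in the \emph{interior} of $V_i$ and $\Psi$ is in fact harmonic (hence finite) there, so one checks directly from \eqref{eq:polarpart1}, or rather from the decomposition $Q^{(n)} = \alpha_n R_n/(PP_0^n)$, that $R_n$ has no zero converging to $z_i$ and the formula for $L_n$ extends continuously; alternatively, since the statement is about convergence to a continuous function, it suffices to prove it on $V_i^o\setminus\{z_i\}$ and invoke continuity. The main obstacle — though it is already dispatched by the cited lemmas — is precisely the interplay of the three rates $\deg R_n \sim (d-1)n$, $\log|n!/\alpha_n| = o(n)$, and $\frac1n\log|Q^{(n)}/n!|\to\log|z-z_i|^{-1}$: the first two are not needed for a.e.\ convergence of $\mu_n$ but are essential to pin down the normalization of $L_n$ correctly, and getting the constant $(d-1)^{-1}$ right depends on Lemma~\ref{lemma:degrn} holding as a genuine limit rather than just an inequality.
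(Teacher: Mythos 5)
The bulk of your argument coincides with the paper's proof: the same splitting of $\log|R_n|$ via (\ref{eq:ratfuncdeg}) into the group $\log|Q^{(n)}/n!|$, the coefficient term $\log|n!/\alpha_n|$, and the terms $\log|P|$ and $n\log|P_0|$, controlled respectively by Lemma \ref{lemma:polya}, Lemma \ref{lemma:highest coefficient}, and boundedness on a compact $W\subset V_i^o\setminus\{z_i\}$, with Lemma \ref{lemma:degrn} converting between the normalizations $1/n$ and $1/\deg R_n$. That part is correct and is exactly what the paper does.

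The gap is your treatment of the point $z_i$. The proposition claims pointwise convergence at every point of $V_i^o$, including $z_i$, and uniform convergence on compact subsets that may contain $z_i$; your decomposition is vacuous at $z=z_i$, where the individual terms are infinite, so something extra is genuinely needed. Your second suggested fix, to ``prove it on $V_i^o\setminus\{z_i\}$ and invoke continuity'' of the limit, is not a valid inference: uniform convergence on compacts of the punctured cell together with continuity of $\Psi$ says nothing about the values $L_n(z_i)$, nor about uniformity on neighbourhoods of $z_i$. Your first suggestion contains the right ingredient --- for large $n$ the polynomial $R_n$ has no zeroes in a fixed closed disk $\bar D(z_i,r)\subset V_i^o$, which indeed follows from Lemma \ref{lemma:polarpart} since near $z_i$ the most singular polar part $a_{i,r_i}(r_i)_n(z-z_i)^{-r_i-n}$ dominates, so $Q^{(n)}$, hence $R_n$, cannot vanish there --- but ``the formula for $L_n$ extends continuously'' is not the conclusion that closes the argument. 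What is needed is that, with no zeroes in the disk, $L_n-\Psi$ is \emph{harmonic} in $D(z_i,r)$ (recall $(d-1)\Psi=\sum_{j\neq i}\log|z-z_j|$ there), so the maximum principle (or the mean value property) transfers the bound $|L_n-\Psi|<\epsilon$ from the circle $|z-z_i|=r$, where you already have it, to the whole disk. This harmonicity-plus-maximum-principle step is precisely how the paper finishes; without it, or an equivalent argument, the convergence at and uniformly near $z_i$ remains unproven in your write-up.
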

\begin{proof} By Lemma \ref{lemma:degrn} it suffices to show that 
$\log\vert R_n(z)\vert/n\to \log |P_0| + \smash{\displaystyle\max_{i=1,\dotsc,d}} \{\log |z-z_i|^{-1}\}=(d-1)\Psi(z)$. Given a compact subset
$W\subset V_i^o$, there is by the preceding Lemma an $N_W$ such that for $n>N_W$ , no zeroes of $R_n$ are contained in $W$. Hence for $n>N_W$, both $\log\vert R_n(z)\vert/n$ and $(d-1)\Psi(z)$ are continuous and harmonic functions in $W$.
If $z\in W\subset V_i^o\setminus\{z_i\}$, (\ref{eq:ratfuncdeg}) implies that
$$
S_n(z):=\log\vert R_n(z)\vert/n= \log\left\vert \frac{Q_n(z)}{n!}\right\vert/n+ \log\left\vert \frac{n!}{\alpha_n}\right\vert/n +\log\vert P_0(z)\vert+ \log\vert P(z)\vert/n.
$$

By taking logarithms in the preceding lemma, the first term in this sum converges uniformly in $W$ to $\log (\vert z-z_i\vert^{-1})$. By Lemma \ref{lemma:highest coefficient}, the second term converges uniformly to $0$. The continuity of $P(z)$ on $W$ implies that this is also true of the last term. Hence the left-hand side converges uniformly to $(d-1)\Psi(z)$ in $W$. This can be extended to compact subsets of $V_i^o$. Assume that $\tilde W\subset V_i^o$ is  a compact set. Furthermore, assume that $z_i$ is contained in its interior,  and that the open disk $D(z_i, r)\subset \tilde W$. Let $W=\tilde W\setminus D(z_i, r)$, and use the previous result to get $N$ such that $n\geq N$ implies $-\epsilon <  S_n(z)-(d-1)\Psi(z)<\epsilon$
in $W$. Since $W$ contains the boundary of $D(z_i, r)$ and both $S_n(z)$ and $(d-1)\Psi(z)$ are harmonic in $D(z_i, r)$, it follows by the maximum principle that $-\epsilon <  S_n(z)-(d-1)\Psi(z)<\epsilon$ in $\tilde W$. This proves the proposition.
\end{proof}

\subsection{Proof of the main theorem}
\label{section:proof}

Uniform convergence a.e. as in Proposition \ref{lemma:conv.a.e}  does not by itself imply convergence of the logarithmic potentials in $L^1_{loc}$, though it tells us that there is only one possible limit, since a function in $L^1_{loc}$ is determined by its behavior a.e.  If we had a uniform bound on the $L^1$-norms of $\Psi_n$ on a compact set $K$ we could use sequential compactness, but it turns out to be easiest to give a direct proof of the convergence. The main obstacle is that the zeroes of $R_n$ are unbounded as $n\to \infty$. To handle this we give a rough bound of the growth of the zeroes of $R_n$ in Lemma \ref{lemma:growth}.
\subsubsection{Growth of zeroes}
\label{section:growth}
Fix a rational function $R(z)$. Note that if we have proven the statement of the theorem for $R(z)$, then the theorem also follows for  $S(w)=R(z)$, where we have changed variables by setting $w=z+a$. There will be at least one open component of the complement $\Vor_{_S}^c$ of the Voronoi diagram, with the property that it contains open disks $D_R(a)$ with center $a $ and radius $R$ of arbitrarily large radius. Hence we may assume, by choosing $a$ suitably, and so moving the Voronoi diagram, that the following holds:

\medskip
(*) the closed unit disk $\bar D_1=D_1(0)\subset \Vor_{_S}^c$. Consequently, by Lemma \ref{lemma:conv.a.e}, there is a number $N$ such that  $z\in D_1\implies R_n(z)\neq 0$, if $n\geq N$.  Equivalently, if $n\geq N$ and $R_n(z)=0$, then $\vert z\vert \geq 1$.

 \medskip
(**) $\bar D_1$ contains no pole, hence $\vert z_i\vert \geq 1,\,i=1,\dotsc,d$.

\medskip

For $K\subset \bC$, set 
$$\vert z_{K,n}\vert =\prod_{z\in K: R_n(z)=0}\vert z\vert, $$
(zeroes taken with multiplicities, and if there are no zeroes of $R_n(z)$ in $K$, then $\vert z_{K,n}\vert =1$. Let $D_R=\{ z: \vert z\vert \leq R\}$, for $R>0$, and let $m_n:=\deg R_n$.

\begin{lemma}
Assume (*) and (**). For $K$ equal to $D_R$ or $D_R^c$,  $\vert z_{K,n}\vert ^{1/n}$ is a bounded sequence.
Hence there is  a number $C$ such that $0\leq (1/m_n)\log \vert z_{K,n}\vert \leq C$ if $n\geq N$.
\label{lemma:growth}
\end{lemma}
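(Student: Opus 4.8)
The plan is to bound $\vert z_{K,n}\vert^{1/n}$ from above by controlling the product of the moduli of the zeroes of $R_n$ in $K$ in terms of its coefficients, and then to use the growth estimates already established (Lemmas \ref{lemma:degrn} and \ref{lemma:highest coefficient}) together with the normalization hypotheses (*) and (**). The lower bound $0 \le (1/m_n)\log\vert z_{K,n}\vert$ is immediate: by (*), every zero $z$ of $R_n$ with $n\ge N$ satisfies $\vert z\vert \ge 1$, so each factor in the product defining $\vert z_{K,n}\vert$ is $\ge 1$, whatever $K$ is; hence $\vert z_{K,n}\vert \ge 1$ and its logarithm is nonnegative.

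For the upper bound, the key observation is that $R_n$ is monic of degree $m_n = \deg R_n$, so the product of the moduli of \emph{all} its zeroes equals $\vert a_0^{(n)}\vert$, the modulus of its constant coefficient. First I would treat the case $K = D_R^c$: there, $\vert z_{D_R^c,n}\vert$ is a product over a subset of the zeroes, each of modulus at least $1$ by (*), so $\vert z_{D_R^c,n}\vert \le \prod_{R_n(z)=0}\vert z\vert = \vert a_0^{(n)}\vert$. Thus it suffices to show $\vert a_0^{(n)}\vert^{1/n}$ is bounded. For this I would evaluate $R_n$ at $0$ using \eqref{eq:ratfuncdeg}: $\alpha_n R_n(0) = Q^{(n)}(0)\, P(0) P_0(0)^n$, so
$$
\vert a_0^{(n)}\vert = \vert R_n(0)\vert = \left\vert \frac{Q^{(n)}(0)}{n!}\right\vert \cdot \left\vert\frac{n!}{\alpha_n}\right\vert \cdot \vert P(0)\vert \cdot \vert P_0(0)\vert^n.
$$
By (**), $0$ lies in some open Voronoi cell (since $\bar D_1$ contains no pole and lies in $\Vor_{_S}^c$), so Lemma \ref{lemma:polya} gives $\vert Q^{(n)}(0)/n!\vert^{1/n} \to \vert 0 - z_i\vert^{-1}$ for the nearest pole $z_i$, a finite limit; Lemma \ref{lemma:highest coefficient} gives $\vert n!/\alpha_n\vert^{1/n}\to 1$; the factor $\vert P(0)\vert^{1/n}\to 1$; and $\vert P_0(0)\vert^{1/n}\to 1$ as well. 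Hence $\vert a_0^{(n)}\vert^{1/n}$ converges, in particular is bounded, which settles $K = D_R^c$.

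For $K = D_R$, I would write $\vert z_{D_R,n}\vert = \vert a_0^{(n)}\vert / \vert z_{D_R^c,n}\vert$ (the two products being complementary, up to zeroes on the boundary circle which may be assigned to either set consistently). Since $\vert z_{D_R^c,n}\vert \ge 1$ by the lower-bound argument, we get $\vert z_{D_R,n}\vert \le \vert a_0^{(n)}\vert$, and the bound just proved applies verbatim. Finally, dividing by $m_n$ rather than $n$ changes nothing asymptotically: by Lemma \ref{lemma:degrn}, $m_n/n \to d-1 \ge 1$, so $(1/m_n)\log\vert z_{K,n}\vert = (n/m_n)\cdot(1/n)\log\vert z_{K,n}\vert$ is a product of a bounded sequence with a convergent one, hence bounded; choosing $C$ to be any upper bound (valid for $n \ge N$) completes the proof. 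The only mildly delicate point is the bookkeeping of zeroes lying exactly on the circle $\vert z\vert = R$ and the observation that all the per-factor lower bounds require $n \ge N$ so that (*) is in force; neither presents a real obstacle.
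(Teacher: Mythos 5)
Your proof is correct and takes essentially the same route as the paper: the same observation that, $R_n$ being monic, $\vert z_{D_R,n}\vert\,\vert z_{D_R^c,n}\vert=\vert R_n(0)\vert$, the same lower bound $1$ from (*), and the same factorization of $R_n(0)$ via \eqref{eq:ratfuncdeg} together with Lemma \ref{lemma:highest coefficient}; the only difference is that you control $\vert Q^{(n)}(0)/n!\vert^{1/n}$ by citing Lemma \ref{lemma:polya} at $z=0$ (legitimate, since (*) and (**) put $0$ in an open cell away from the poles), whereas the paper re-expands $Q^{(n)}(0)$ with Lemma \ref{lemma:polarpart}. One cosmetic slip: the relevant factor is $\vert P_0(0)\vert^{n}$, whose $n$-th root is the constant $\vert P_0(0)\vert$ rather than something tending to $1$, but this does not affect the boundedness conclusion.
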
 
\begin{proof} By (*), $1\leq \vert z_{K,n}\vert ^{1/n}$, for $K$ equal to either $D_R$ or $D_R^c$ and $n\geq N$.  Furthermore
$\vert z_{D_R,n}\vert \vert z_{D_R^c,n}\vert =\vert R_n(0)\vert $, so it suffices to check that $\vert R_n(0)\vert^{1/n}$ is a bounded sequence. We have, by (\ref{eq:ratfuncdeg}) and Lemma \ref{lemma:polarpart}, 

\begin{equation*}
R_n(0)=\left(\frac{n!}{\alpha_n}\right)P_0(0)P(0)^n \sum_{i=1}^d\frac{(-1)^n\frac{(r_i)_n}{n!}a_{i,r_i}}{(-z_i)^{r_i+n}}\big(1+S_i^n(0)\big).
\end{equation*}
Consider the factors.
First, by Lemma \ref{lemma:highest coefficient}, $\vert \frac{n!}{\alpha_n}\vert^{1/n}\to 1$ as $n\to\infty$. Second, $\vert P_0(0)P(0)^n\vert^{1/n}\to \vert P(0)\vert \neq 0$ by (*). Finally, by Lemma \ref{lemma:polarpart},  $\vert 1+S_i^n(0)\vert$ is bounded, and so there is an $M> 0$ such that
$$
\left\vert\sum_{i=1}^d\frac{(-1)^n\frac{(r_i)_n}{n!}a_{i,r_i}}{(-z_i)^{r_i+n}}\big(1+S_i^n(0)\big)\right\vert\leq M \max_{i=1,\dotsc,d}\left\{\frac{(r_i)_n}{z_i^nn!}\right\}.
$$
Since $\frac{(r_i)_n}{n!}$ is a polynomial in $n$ of fixed degree, it follows that $\vert R_n(0)\vert^{1/n}$ is indeed bounded. This completes the proof.

\end{proof}
\subsubsection{A lot of integrals}
\label{sec:3integrals}
We now turn to the proof of the theorem, and first note that it is enough to prove (ii), since (i) is an immediate consequence by taking the Laplacian.
We want to estimate, for a fixed $R>0$,
\begin{equation*}
I_1:=\int_{\D_R}\vert L_n(z)-\Psi(z)\vert\,\mathrm{d}\lambda .
\end{equation*}

Fix $0<\epsilon<1$. By the uniform convergence in Proposition \ref{lemma:conv.a.e}, there is to any open subset $U\subset \bar U \subset D_R\setminus \Vor_{_S}$ an $N$ such that $n\geq N$ implies that
$\vert L_n(z)-\Psi(z)\vert\leq \epsilon$ if $z\in U$. Hence\begin{equation}
\label{eq: integralone}
I_2:=\int_{U}\vert L_n(z)-\Psi(z)\vert\,\mathrm{d}\lambda \leq \pi R^2\epsilon=O(\epsilon).
\end{equation}

Furthermore 
\begin{equation}
\label{eq:twointegrals}
\int_{D_R\setminus U}\vert L_n(z)-\Psi(z)\vert\,\mathrm{d}\lambda \leq \int_{D_R\setminus U}\vert L_n(z)\vert\,\mathrm{d}\lambda +\int_{D_R\setminus U}\vert \Psi(z)\vert\,\mathrm{d}\lambda =: I_4+I_3
\end{equation}

The last integral \begin{equation}
\label{eq:secondintegral}
I_3\leq M_1\lambda(D_R\setminus U)\leq M_1 \epsilon,
\end{equation}
 letting $M_1$ be the maximal value of $\Psi(z)$ in $D_R$, and assuming that $U$ is chosen suitably large, so that 
 \begin{equation}
 \label{eq:epsilon}
\lambda(D_R\setminus U)\leq \epsilon.
\end{equation}
The integral $I_4$ in (\ref{eq:twointegrals}) needs more bookkeeping. First split the function into two parts
 $$
 L_n(z)=\sum_{\vert z_i\vert \geq R+1}(1/m_n) \log\vert z-z_i\vert+\sum_{\vert z_i\vert <R+1}(1/m_n) \log\vert z-z_i\vert:=L_n^{I}(z)+ L_n^{II}(z).
 $$
 Then from
 \begin{equation*}
0\leq \log \vert z-z_i\vert\leq \log( R+\vert z_i\vert)\leq \log R+\log\vert z_i\vert,\hskip 0.5 cm \text{if } \vert z_i\vert\geq R+1,\ \vert z\vert\leq R .
\end{equation*}
we get that 
\begin{eqnarray}
 &&\int_{D_R\setminus U}\vert L_n^{I}(z)\vert\,\mathrm{d}\lambda = \sum_{\vert z_i\vert \geq R+1}(1/m_n)\int_{D_R\setminus U} \vert \log\vert z-z_i\vert \vert\,\mathrm{d}\lambda\\
&\leq& \left(\log(R+1)+  \sum_{\vert z_i\vert \geq R+1}\log \vert z_i\vert (1/m_n)\right) \lambda(D_R\setminus U)\\
  \label{eq:thirdintegral1}
 &\leq &(\log(R+1)+C)\epsilon=O(\epsilon),
 \end{eqnarray}
where the last inequality follows from Lemma \ref{lemma:growth}, using (\ref{eq:epsilon}).

If in addition to $\vert z\vert\leq R$ and $\vert z_i\vert\leq R+1$, also $\vert z-z_i\vert \geq \epsilon$,
 then $\vert \log\vert z-z_i\vert \vert\leq \max\{ -\log\epsilon, \log(2R+1)\}$.
This implies the inequality
\begin{eqnarray*}
&&\int_{D_R\setminus U}\vert \log\vert z-z_i\vert \vert\,\mathrm{d}\lambda\\
&&\leq \int_{\vert z-z_i\vert
\leq \epsilon}\vert \log\vert z-z_i\vert \vert\,\mathrm{d}\lambda+
\max\{ -\log\epsilon, \log(2R+1) \}\lambda(D_R\setminus U)\\
&&
\leq-2\pi(\log\epsilon-1/2)(\epsilon^2/2)+\max\{ -\log\epsilon, \log(2R+1) \}\epsilon=o(1).
\end{eqnarray*}
Note that the bound does not depend on $z_i$. Hence if we sum over all terms in the integral of $L^{II}$, which are at most $m_n$ in number, we get that
\begin{equation}
\label{eq:thirdintegral2}
\int_{D_R\setminus U}\vert L_n^{II}(z)\vert\,\mathrm{d}\lambda=o(1).
\end{equation}

Now Theorem 1.1(ii) follows from the fact that the upper bounds in (\ref{eq: integralone}), 
(\ref{eq:secondintegral}), (\ref{eq:thirdintegral1})  and (\ref{eq:thirdintegral2}) go to 0 when $\epsilon$ goes to 0.

 \subsection{$\Psi$ is a logarithmic potential}
 Let $\mu_{_S}$ be the measure defined earlier on the Voronoi diagram associated with a set of points $S$.
Given that $L:=L_{\mu_{_S}}(z)=\int_{\bC} \log\vert z-\zeta\vert \diff \mu_{_S}(\zeta)$ exists as a $L^1_{loc}-$function,  it will differ from $\Psi_S$ by a harmonic function. But in fact they are equal.
 \begin{cor} 
 \label{cor:probmeasure}$\Psi_{_S}$ is the logarithmic potential of $\mu_{_S}$.
\end{cor}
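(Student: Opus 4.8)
The plan is to show that $L:=L_{\mu_{_S}}$ and $\Psi_{_S}$ differ by a harmonic function on all of $\bC$, and then pin down that function by comparing growth at infinity. First I would establish that $L$ exists as an $L^1_{loc}$ function: since $\mu_{_S}$ is a compactly supported probability measure on the Voronoi $1$-skeleton, its logarithmic potential is the standard superharmonic (indeed harmonic off the support) $L^1_{loc}$ function, and $\Delta L = 2\pi\,\mu_{_S}$ in the sense of distributions by the usual properties of Newtonian/logarithmic potentials in the plane. On the other hand, by Proposition \ref{lemma:1}(2) we have $\Delta\Psi_{_S} = 2\pi\,\mu_{_S}$ as well. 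Hence $h:=L-\Psi_{_S}$ is harmonic on $\bC$ (it is a distribution with vanishing Laplacian, so by Weyl's lemma it agrees a.e.\ with a genuine harmonic function).

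The second step is to control the behaviour of both functions at infinity so as to force $h$ to be constant, and then to evaluate that constant. For $|z|\to\infty$, $L_{\mu_{_S}}(z) = \log|z| + o(1)$ because $\mu_{_S}$ has total mass $1$ and compact support (expand $\log|z-\zeta| = \log|z| + O(|\zeta|/|z|)$ uniformly for $\zeta$ in the support). For $\Psi_{_S}$, from its defining formula $\Psi_{_S}(z) = (d-1)^{-1}\bigl(\log|\tilde P(z)| - \log\Phi(z)\bigr)$: since $\log|\tilde P(z)| = d\log|z| + o(1)$ and $\log\Phi(z) = \log|z| + o(1)$ at infinity, we get $\Psi_{_S}(z) = (d-1)^{-1}(d-1)\log|z| + o(1) = \log|z| + o(1)$. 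Therefore $h(z) = L(z) - \Psi_{_S}(z) \to 0$ as $|z|\to\infty$, and a bounded (indeed vanishing-at-infinity) harmonic function on $\bC$ is identically $0$. Hence $\Psi_{_S} = L_{\mu_{_S}}$.

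An alternative, perhaps cleaner, route avoids separating out the harmonic part: one can invoke that Theorem \ref{th:voronoi}(ii) has already been proved, so $L_n \to \Psi$ in $L^1_{loc}$; but $L_n$ is by definition the logarithmic potential of the probability measure $\mu_n$, and $\mu_n \to \mu_{_S}$ weakly by part (i), so passing to the limit (using the lower semicontinuity / continuity of logarithmic potentials under weak convergence of uniformly compactly supported probability measures, together with the uniform integrability furnished by the estimates in Section \ref{section:proof}) identifies the $L^1_{loc}$ limit of $L_n$ as $L_{\mu_{_S}}$. Comparing the two identifications of the limit gives $\Psi = L_{\mu_{_S}}$ directly.

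The main obstacle is the justification that $L_{\mu_{_S}}$ is genuinely an $L^1_{loc}$ function and that $\Delta L_{\mu_{_S}} = 2\pi\mu_{_S}$ — i.e.\ that $\mu_{_S}$ is not too singular for its potential to behave well. This is where one uses the explicit form $\mu_{ij} = \frac{1}{2(d-1)\pi}\frac{|z_i-z_j|}{|(z-z_i)(z-z_j)|}\diff s$: the density is bounded on compact subsets of each bisector line away from the poles $z_i$ (and the poles lie off the Voronoi skeleton), so $\mu_{_S}$ is a finite measure with bounded $C^1$-arc support and locally bounded density, whence $\int \log|z-\zeta|\,d\mu_{_S}(\zeta)$ converges absolutely for a.e.\ $z$ and defines an $L^1_{loc}$ function with the asserted distributional Laplacian by Fubini and the planar Green's function identity $\Delta_z \log|z-\zeta| = 2\pi\delta_\zeta$. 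Once this is in hand, the harmonicity of $h$ and the growth comparison are routine.
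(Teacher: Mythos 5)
Your overall strategy is the same as the paper's (show $L_{\mu_{_S}}$ and $\Psi_{_S}$ have the same distributional Laplacian, so they differ by an entire harmonic function, then kill that function by comparing behaviour at infinity), but there is a genuine gap in the execution: you repeatedly assert that $\mu_{_S}$ is \emph{compactly supported} (``a compactly supported probability measure on the Voronoi $1$-skeleton'', ``bounded $C^1$-arc support''), and this is false. The Voronoi diagram of any finite set of $d\ge 2$ points has unbounded edges, so $\supp\mu_{_S}$ is unbounded; the measure is finite only because its density decays like $|z|^{-2}\diff s$ along each bisector line. This invalidates precisely the two steps you lean on it for. First, the existence of $L_{\mu_{_S}}$ as an $L^1_{loc}$ function is not the ``standard'' compact-support fact; one must check convergence of $\int_K\int |\log|z-\zeta||\,\diff\mu_{_S}(\zeta)\,\diff\lambda(z)$ against the unbounded tail, which the paper does by mapping each $l_{ij}$ affinely to $\bR$ so that $\delta_{ij}$ becomes $\frac{1}{\pi}\frac{\diff t}{1+t^2}$ and noting the integrand is $\approx \lambda(K)\log|t|/t^2$ for large $|t|$. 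Second, and more seriously, your asymptotic $L_{\mu_{_S}}(z)=\log|z|+o(1)$ is justified only by the expansion $\log|z-\zeta|=\log|z|+O(|\zeta|/|z|)$ ``uniformly for $\zeta$ in the support'', which is exactly what fails when the support is unbounded. The correct argument (as in the paper) bounds $|L(z)-\log|z||$ by $\sum_{i,j}\int_{l_{ij}}|\log|1-\zeta/z||\,\diff\delta_{ij}(\zeta)$ and shows $\int_{\bR}\frac{|\log|1-t/z||}{1+t^2}\diff t\to 0$ as $|z|\to\infty$, using the $1/(1+t^2)$ tail decay; without some such tail estimate the conclusion $h\equiv 0$ is not reached.

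Your proposed alternative route has the same flaw in a different place: the zero-counting measures $\mu_n$ are \emph{not} uniformly compactly supported (the zeroes of $R_n$ escape to infinity; see Proposition \ref{lemma:zeroes2} and the growth discussion in Section \ref{section:growth}), so the cited continuity of logarithmic potentials under weak convergence of uniformly compactly supported measures does not apply, and identifying the $L^1_{loc}$ limit of $L_n$ with $L_{\mu_{_S}}$ would again require tail control comparable to the direct argument. The remainder of your first route (equality of Laplacians via Proposition \ref{lemma:1}, Weyl's lemma, Liouville for bounded harmonic functions, and the asymptotics of $\Psi_{_S}$ itself) is fine; the proof is repaired exactly by replacing the compact-support claims with the explicit decay of the densities $\delta_{ij}$ along the unbounded bisector lines.
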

\begin{proof}
First of all, $L:=L_{\mu_{_S}}(z)=\int_{\bC} \log\vert z-\zeta\vert \diff \mu_{_S}(\zeta)$ is well-defined as a $L^1_{loc}-$function: Let $l_{ij}= \{ z:\ \vert z-z_i\vert =\vert z-z_j\vert\}$, and use the notation of Proposition \ref{lemma:1}. Then, for $K\subset \bC$ a compact set,
$$
\int_K\vert L(z)\vert d\lambda(z) \leq\sum_{i,j} \int_{l_{ij}} \left(\int_K\vert \log\vert z-\zeta\vert \vert\diff \lambda(z)\right)\diff\delta_{ij}(\zeta). 
$$ 
Now fix a line $l_{ij}$. An affine change of coordinates transforms $l_{ij}$ into the real axis, and then $\delta_{ij}$ is given by
$\frac{1 }{\pi}\frac{1}{1+t^2}\diff t$ (see Example \ref{example:realline}). Hence it suffices to prove that
$$
\int_{\bR} \left(\int_K\frac{\vert \log\vert z-t\vert \vert}{1+t^2}\diff \lambda(z)\right)\diff t
$$
is finite. This is clear, since for large $\vert t\vert $, the integrand is approximately $\lambda(K)\log\vert t\vert /t^2$.
Secondly, we will prove that $L(z)$ has the property that 
$$\lim_{\vert z\vert \to\infty}(L(z)-\log\vert z\vert)=0.$$
Since $\Psi(z)$, by inspection, has the same property, the desired result  directly follows: the harmonic function $\Psi(z)-L(z)$ is bounded, and hence constant and equal to $0$.

Now, as above,
$$
\vert L(z)-\log\vert z\vert\vert \leq\sum_{i,j} \int_{l_{ij}}{\left\vert \log\left\vert 1-\frac{\zeta}{z}\right\vert \right\vert}\diff\delta_{ij}(\zeta),
$$
and again, after an affine transformation, it is enough to consider
$$
\int_{\bR}\frac{\left\vert \log\left\vert 1-\frac{t}{z}\right\vert \right\vert}{1+t^2}\diff t,
$$
which is easily seen to have the limit $0$ as $\vert z\vert\to\infty.$

\end{proof}

\subsection{A single pole}
For completeness, we consider the case when $Q$ has only one pole. 

\begin{lemma}
\label{lemma:multipole}
Let $R(z)$ be a polynomial with $\tilde{d}\ge 0$ zeroes (distinct or multiple), let $m$ and $n$ be nonnegative integers, let $\alpha\in\mathbb{C}$, and define
\begin{equation*}\label{eq:explicitpoleNoDerivs}
Q(z) = \frac{R(z)}{(z-\alpha)^m}.
\end{equation*}
Then for any $r\in\mathbb{R}_+$, the open disk $\vert z\vert < r$ contains no zeroes of $Q^{(n)}(z)$ for all sufficiently large $n$.
\end{lemma}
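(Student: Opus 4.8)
The plan is to derive this directly from Lemma~\ref{lemma:polarpart}; the point is that with a single pole the error term in that lemma is controlled on the \emph{whole} disk $\{\vert z\vert\le r\}$, not merely on compact subsets of a Voronoi cell, which is precisely what makes the disk eventually zero-free.

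\emph{Step 1: reduction to a genuine pole.} After cancelling any common factor, write $Q(z)=\tilde R(z)/(z-\alpha)^{k}$ with $\tilde R(\alpha)\neq 0$ and $0\le k\le m$. If $k=0$ then $Q$ is a polynomial and $Q^{(n)}\equiv 0$ once $n>\deg Q$, so there is nothing to prove; we may thus assume $k\ge 1$. Dividing out the polynomial part, $Q=P_0+\sum_{j=1}^{k}a_j(z-\alpha)^{-j}$ with $a_k\neq 0$, and for every $n>\deg P_0$ we have $Q^{(n)}=\bigl(\sum_{j=1}^{k}a_j(z-\alpha)^{-j}\bigr)^{(n)}$, so $P_0$ plays no role from that point on.

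\emph{Step 2: apply Lemma~\ref{lemma:polarpart}.} Since $\{\vert z\vert\le r\}$ is compact, the lemma (applied to the single polar part above) yields
$$Q^{(n)}(z)=\frac{(-1)^{n}(k)_{n}\,a_k}{(z-\alpha)^{k+n}}\bigl(1+S^n(z)\bigr),\qquad \sup_{\vert z\vert\le r}\vert S^n(z)\vert\longrightarrow 0 .$$
Pick $N>\deg P_0$ so large that $\vert S^n(z)\vert<1$ on $\{\vert z\vert\le r\}$ for all $n\ge N$; then $1+S^n(z)\neq 0$ there. The prefactor $(-1)^{n}(k)_{n}a_k(z-\alpha)^{-(k+n)}$ is nonzero for every $z\neq\alpha$ (here $k\ge 1$ ensures $(k)_n\neq 0$, and $a_k\neq 0$), so $Q^{(n)}$ has no zero on $\{\vert z\vert\le r\}\setminus\{\alpha\}$; and if $\alpha$ lies in the disk, $Q^{(n)}$ has a pole, not a zero, there. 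Hence $Q^{(n)}$ is zero-free on $\vert z\vert<r$ for all $n\ge N$.

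I do not expect a genuine obstacle here: the only care needed is the bookkeeping of Step~1 — isolating the actual order $k$ of the pole, discarding the polynomial summand, and noting the degenerate case $k=0$ in which the derivatives eventually vanish identically — after which the conclusion is an immediate consequence of the uniform estimate in Lemma~\ref{lemma:polarpart}.
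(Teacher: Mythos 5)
Your proof is correct, but it takes a genuinely different route from the paper's. You reduce to an exact pole of order $k\ge 1$, discard the polynomial summand (which dies after $\deg P_0$ differentiations), and then reuse Lemma~\ref{lemma:polarpart}: because $S^n$ there is a polynomial of bounded degree whose coefficients tend to $0$, the factorization $Q^{(n)}(z)=\frac{(-1)^n(k)_n a_k}{(z-\alpha)^{k+n}}\bigl(1+S^n(z)\bigr)$ is controlled uniformly on all of $\{\vert z\vert\le r\}$, and once $\sup_{\vert z\vert\le r}\vert S^n\vert<1$ neither factor vanishes there (at $z=\alpha$ one has a pole, not a zero), which is exactly the assertion. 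The paper argues instead from the explicit Taylor-expansion formula (\ref{eq:explicitpole}): it identifies the polynomial $H_n$ of degree $\theta=\min(m-1,\tilde{d})$ that governs the zeroes of $Q^{(n)}$ for large $n$, passes to the reversed polynomial $W_n(y)=y^{\theta}H_n(1/y+\alpha)$, and applies Fujiwara's root bound to show all roots of $W_n$ converge to $0$, i.e.\ the zeroes of $Q^{(n)}$ escape every disk. Your argument is shorter, avoids any root-location bound, and leverages machinery already established in Section~4.1; the paper's computation buys extra information, namely the exact eventual number $\min(m-1,\tilde{d})$ of zeroes of $Q^{(n)}$ and explicit control of how they diverge. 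One minor point: in your degenerate case $k=0$ (and likewise $m=0$) one has $Q^{(n)}\equiv 0$ for large $n$, so ``no zeroes in the disk'' is only sensible under the convention that the identically vanishing function is excluded; the paper's ``without loss of generality'' normalization glosses over the same point, so this is a matter of wording rather than a gap.
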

\begin{proof}
Without loss of generality, we can assume that $\alpha$ is not a zero of $R(z)$ and that $\tilde{d}\ge 1$. By using the Taylor expansion of $R(z)$ about $z=\alpha$ and summing the terms where $k-m<0$ and $k-m\ge 0$ separately, we see that
\begin{equation}\label{eq:explicitpole}
\begin{split}
Q^{(n)}(z) = \left(\frac{R}{(z-\alpha)^m}\right)^{(n)} & = \frac{(-1)^n}{(z-\alpha)^{m+n}}\sum_{k=0}^{m-1}\frac{R^{(k)}(\alpha)}{k!}\cdot\frac{(n+m-k-1)!}{(m-k-1)!}\,(z-\alpha)^k \\
& + \sum_{k=m}^{\tilde{d}}\frac{R^{(k)}(\alpha)}{k!}\left((z-\alpha)^{k-m}\right)^{(n)}.
\end{split}
\end{equation}
Note from (\ref{eq:explicitpole}) that $Q^{(n)}(z)$ has $\theta := \min(m-1,\tilde{d})$ zeroes for all large enough $n$, since the polynomial $H_n(z) := \sum_{k=0}^{\theta} L_{n,k}(z-\alpha)^k$ dominates its zero distribution for such $n$, where
\begin{equation*}
L_{n,k} := \frac{R^{(k)}(\alpha)}{k!}\cdot\frac{(n+m-k-1)!}{(m-k-1)!}.
\end{equation*}
In particular, if $m=1$, it follows from (\ref{eq:explicitpole}) that $Q^{(\tilde{d})}(z) = -R(\alpha)\tilde{d}!/(\alpha-z)^{\tilde{d}+1}$. Thus, any rational function with a simple pole has no zeroes when differentiated $n\ge \tilde{d}$ times.

If $m>1$, consider the polynomial
\begin{equation*}
W_n(y) := y^{\theta}H_n(1/y+\alpha) = \sum_{k=0}^{\theta}L_{n,k}y^{\theta - k} = \sum_{k=0}^{\theta}L_{n,\theta-k}y^{k}.
\end{equation*}
Clearly, when $n\to\infty$, the zeroes of $Q^{(n)}(z)$ and $H_n(z)$ diverge toward $\infty$ iff the zeroes of $W_n(y)$ converge on $0$. Now note that, for $j=0,1,\dotsc,\theta-1$,
\begin{equation}\label{eq:fujiwara}
\lim_{n\to\infty}\left\vert\frac{L_{n,\theta-j}}{L_{n,0}}\right\vert^{\frac{1}{\theta-j}} = \lim_{n\to\infty}\left\vert\frac{C}{(n+m-1)(n+m-2)\dotsm (n+m-\theta+j)}\right\vert^{\frac{1}{\theta-j}} = 0,
\end{equation}
where
\begin{equation*}\label{eq:fujiwaraLongEq}
C := \frac{R^{(\theta-j)}(\alpha)}{R(\alpha)}\cdot\frac{(m-1)!}{(\theta-j)!\,(m+j-\theta-1)!}
\end{equation*}
is a constant independent of $n$. Thus, equation (\ref{eq:fujiwara}) and Fujiwara's bound \cite{Fu} show that all zeroes of $W_n(y)$ converge on $0$ when $n\to\infty$.
\end{proof}

\section{Algebraic and differential equations}

\subsection{The algebraic equation for the Cauchy transform of the asymptotic measure}
\label{section: diffeq}

The Cauchy transform $C_\mu$ of the measure $\mu$ is defined as  
\begin{equation*}
C_\mu(z)=\int \frac{1}{z-t}\,\mathrm{d}\mu(t),
\end{equation*}
and satisfies $C_\mu=2\frac{\partial \Psi}{\partial z}$ and
$\mu=\frac{1}{\pi}\frac{\partial C_\mu}{\partial \bar z}$, where $\Psi$ is the logarithmic potential of $\mu$.

\begin{corollary}
\label{cor:2}
 Assume $d\geq 2$. The Cauchy transform of the asymptotic measure $\mu=\mu_{_S}$ associated with the set $S=\{z_1,\dotsc,z_d\}$ satisfies the
algebraic equation
\begin{equation*}
\prod_{i=1}^d\left(C_\mu-(d-1)^{-1}\left(\sum_{j\neq i}^d(z-z_j)^{-1}\right)\right)=0
\end{equation*}
a.e. 
\end{corollary}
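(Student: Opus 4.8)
The plan is to reduce the statement to the explicit piecewise formula for $\partial\Psi/\partial z$ already obtained in (\ref{eq:Ana}). Recall from Corollary \ref{cor:probmeasure} that $\Psi=\Psi_{_S}$ is the logarithmic potential of $\mu=\mu_{_S}$, so that, as noted at the start of Section \ref{section: diffeq}, $C_\mu=2\,\partial\Psi/\partial z$; the right-hand side is represented by a locally integrable function (it has only simple-pole singularities at the points of $S$), so this is an identity in $L^1_{loc}$, not merely of distributions.

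Next I would invoke the analysis of piecewise harmonic functions. As in the proof of Lemma \ref{lemma:piecewise}(1), the distributional derivative $\partial\Psi/\partial z$ coincides a.e.\ with the pointwise derivative, which on each open Voronoi cell $V_i^o$ equals the holomorphic function $A_i$ of (\ref{eq:Ana}). Hence, for almost every $z\in V_i^o$,
\[
C_\mu(z)=2A_i(z)=(d-1)^{-1}\Bigl(-(z-z_i)^{-1}+\sum_{j=1}^d (z-z_j)^{-1}\Bigr)=(d-1)^{-1}\sum_{j\neq i}(z-z_j)^{-1}.
\]
Thus the $i$-th factor of the product in the statement vanishes (a.e.) throughout $V_i^o$, and therefore so does the entire product on $V_i^o$.

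Finally, since $\bC\setminus(\Vor_{_S}\cup S)=\bigcup_{i=1}^d V_i^o$ and $\Vor_{_S}\cup S$ — a finite union of line segments together with the finitely many points of $S$ — has Lebesgue measure zero, the product vanishes at almost every point of $\bC$, which is the assertion. There is no real obstacle in this argument: all the substance is already contained in Proposition \ref{lemma:1} and formula (\ref{eq:Ana}), and the only point demanding a word of care is the passage from the distributional identity $C_\mu=2\,\partial\Psi/\partial z$ to an almost-everywhere identity of functions, which is precisely what Lemma \ref{lemma:piecewise}(1) (a continuous, piecewise-differentiable function has matching pointwise and distributional derivatives) supplies.
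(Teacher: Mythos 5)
Your argument is correct and follows essentially the same route as the paper: both identify $C_\mu=2\,\partial\Psi/\partial z$ with the piecewise analytic expression $A_i$ from (\ref{eq:Ana}) on each open Voronoi cell and conclude since the cells cover $\bC$ almost everywhere. Your version merely makes the supporting references (Corollary \ref{cor:probmeasure}, Lemma \ref{lemma:piecewise}(1)) explicit, which the paper leaves implicit.
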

\begin{proof} It is part of the proof of Proposition \ref{lemma:1}  (and the theorem) that the Cauchy transform $C_\mu=2\frac{\partial \Psi}{\partial z}$  is piecewise analytic. In the interior of the Voronoi cell $V_i$
it equals $$
(d-1)^{-1}\left(-(z-z_i)^{-1}+\sum_{j=1}^d(z-z_j)^{-1}\right)=(d-1)^{-1}\left(\sum_{j\neq i}^d(z-z_j)^{-1}\right).
$$
Since such interiors cover $\bC$ a.e, the equation follows.

\end{proof}
An algebraic equation for the Cauchy transform of an asymptotic measure is an interesting invariant, from which the local behaviour of the logarithmic potential may be deduced, see e.g  \cite{BoSh}. Here the solutions of the algebraic equation have no monodromy, since they are rational functions, making the situation very simple. Algebraic equations may sometimes be derived from differential equations of the Cauchy transform; such exist also here, as we will see now.

\subsection{Differential equations for $Q^{(n)}$ and $R_n$}
\label{section:diffeq}
We will only give explicit equations in a special case, and in the general case we just indicate how they can be produced.
Let
$$
Q=\sum_{j=1}^d\alpha_j(z-z_i)^{-s}.
$$
Then
\begin{equation}
\label{eq:qder}
Q^{(n)}=(-1)^n(s)_n\sum_{j=1}^d\alpha_j(z-z_i)^{-s-n}.
\end{equation}
These are power sums, and Newton's relations imply that there is a differential equation satisfied by $Q^{(n)}$.

\begin{proposition} 
\label{prop:diffeq}If the rational function $Q=\sum_{j=1}^d\alpha_j(z-z_i)^{-s}$, then its derivatives $Q^{(n)}$ satisfy
\begin{equation}
\label{eq:qderspecial}
\sum_{i=0}^d \frac{e_i(z) }{(s+n)(s+n+1)\dotsm(s+n+i-1)}\frac{\diff^iQ^{(n)}}{\diff z^i}=0,
\end{equation}
where $e_i(z)$ is the $i$'th elementary symmetric function in the $d$ arguments $z-z_i,\quad i=1,\dotsc,d$ (and $e_0=1$).
\end{proposition}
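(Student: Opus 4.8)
The plan is to exploit the fact that the functions $g_j(z):=(z-z_j)^{-(s+n)}$, $j=1,\dotsc,d$, are (up to the common scalar $(-1)^n(s)_n$ and the coefficients $\alpha_j$) exactly the terms appearing in $Q^{(n)}$, and that each of them satisfies the same first-order linear ODE, namely $(z-z_j)\,g_j' + (s+n)\,g_j = 0$. The standard way to produce a single linear differential operator annihilating the sum $Q^{(n)} = (-1)^n(s)_n\sum_j \alpha_j g_j$ is to observe that the $d$ functions $g_1,\dotsc,g_d$ are linearly independent (the $z_j$ are distinct) and span a $d$-dimensional solution space of a monic linear ODE of order $d$ with rational-function coefficients, the Wronskian-type operator $\det$-construction. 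However, here one can be completely explicit: I would verify directly that, writing $D=\diff/\diff z$, the operator
\[
\mathcal{L}_n := \sum_{i=0}^{d}\frac{e_i(z)}{(s+n)(s+n+1)\dotsm(s+n+i-1)}\,D^i
\]
kills each $g_j$, and hence kills $Q^{(n)}$ by linearity; the normalization by the Pochhammer-type product is precisely what is needed to make the coefficients come out as the elementary symmetric functions $e_i(z)$ in the shifted variables $z-z_1,\dotsc,z-z_d$.

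The key computation is the following. For fixed $j$, one has $D^i g_j = (-1)^i (s+n)(s+n+1)\dotsm(s+n+i-1)\,(z-z_j)^{-(s+n+i)}$, so that
\[
\frac{D^i g_j}{(s+n)(s+n+1)\dotsm(s+n+i-1)} = (-1)^i (z-z_j)^{-(s+n+i)} = (-1)^i (z-z_j)^{-(s+n)}(z-z_j)^{-i}.
\]
Therefore $\mathcal{L}_n g_j = (z-z_j)^{-(s+n)}\sum_{i=0}^{d}(-1)^i e_i(z)(z-z_j)^{-i}$, and the sum telescopes: since $e_i(z)$ is the $i$th elementary symmetric function of $z-z_1,\dotsc,z-z_d$, we have $\sum_{i=0}^{d}(-1)^i e_i(z)\,X^{d-i} = \prod_{k=1}^d(X-(z-z_k))$ as polynomials in an auxiliary variable $X$. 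Setting $X = (z-z_j)$ makes the $k=j$ factor vanish, so $\sum_{i=0}^{d}(-1)^i e_i(z)(z-z_j)^{d-i} = 0$; dividing by $(z-z_j)^{d}$ gives exactly $\sum_{i=0}^{d}(-1)^i e_i(z)(z-z_j)^{-i}=0$ wherever $z\neq z_j$. Hence $\mathcal{L}_n g_j \equiv 0$ for each $j$, and so $\mathcal{L}_n Q^{(n)} = 0$, which is (\ref{eq:qderspecial}).

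The only thing to be careful about — the mild obstacle, really just bookkeeping — is the substitution $X\mapsto z-z_j$ when $j$ itself indexes one of the factors: one should phrase the identity $\sum_{i=0}^d(-1)^ie_i(z)X^{d-i}=\prod_{k=1}^d(X-(z-z_k))$ as an identity of polynomials in $X$ with coefficients that are functions of $z$, and only afterwards specialize $X$, so there is no circularity. One should also note the statement holds as an identity of rational functions (equivalently, on the dense open set where $z\notin S$), which is all that is claimed, and that the reduction from a general $Q$ to a pure power-sum $Q=\sum_j\alpha_j(z-z_j)^{-s}$ is exactly the hypothesis of the proposition, so (\ref{eq:qder}) may be used verbatim. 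No deeper input (Newton's relations beyond the elementary-symmetric-function identity above, or the theory of Fuchsian ODEs) is actually needed for the proof, though the remark about Newton's relations explains where the shape of $\mathcal{L}_n$ comes from.
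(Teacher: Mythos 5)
Your proof is correct and is essentially the paper's own argument in operator clothing: both rest on evaluating $\prod_{k}(X-(z-z_k))=\sum_{i=0}^d(-1)^ie_i(z)X^{d-i}$ at $X=z-z_j$ and on the explicit formula $D^i(z-z_j)^{-(s+n)}=(-1)^i(s+n)\dotsm(s+n+i-1)(z-z_j)^{-(s+n+i)}$, then summing over $j$ with weights $\alpha_j$. The paper just writes this as a sum of scaled identities in $w_j=z-z_j$ rather than as an annihilating operator applied to each term, so no substantive difference.
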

\begin{proof} 
Let $t=w_j$ in $\prod_{j=1}^d(t-w_j)=\sum_{i=0}^d (-1)^{i}e_i(w) t^{d-i}$, which gives
$$
\sum_{i=0}^d (-1)^{i}e_i(w) w_j^{d-i}=0.
$$
Divide by $w_j^{d+s+n}$ and  modify the coefficients:
\begin{equation}
\label{eq:newton}
\sum_{i=0}^d (-1)^{i}\frac{(-1)^{n+i}e_i(w) }{(s)_{n+i}} (-1)^{n+i}{(s)_{n+i}}w_j^{-s-n-i}=0.
\end{equation}

Hence, letting $w_j=z-z_j$, multiplying by $\alpha_j$ and adding the equations (\ref{eq:newton}) for $j=1,\dotsc,d$, shows, using (\ref{eq:qder}), that $y=Q^{(n)}$ satisfies
$$
(-1)^n\sum_{i=0}^d \frac{e_i(z) }{(s)_{n+i}}\frac{\diff^iy}{\diff z^i}=0.
$$
This is equivalent to $(\ref{eq:qderspecial})$.
\end{proof}

It is in general easy to find a differential equation for the derivatives of an arbitrary rational function, and hence also for its denominators. We sketch the procedure.

\begin{proposition} For an arbitrary rational function $Q=R/P$, the $Q^{(n)}$ satisfy a differential equation $$
P(n,z,\mathrm{d}/\mathrm{d}z)Q^{(n)}=0,$$ of order at most $\deg R+1$.
\end{proposition}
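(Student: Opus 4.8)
The plan is to reduce the statement to a linear-algebra fact about the finite-dimensional vector space spanned by the partial-fraction building blocks of $Q^{(n)}$. Write $Q = R/P$ with $\deg R < \deg P$ (the polynomial part of $Q$ contributes nothing after enough differentiations, so we may discard it). After a partial-fraction decomposition $Q = \sum_{i=1}^d P_i(z)/(z-z_i)^{r_i}$ as in Section 4.1, formula \eqref{eq:polarpart1} shows that $Q^{(n)}$ lies, for every $n$, in the fixed $\mathbb C(z)$-vector space
$$
W := \operatorname{span}_{\mathbb C(z)}\Bigl\{ \tfrac{1}{(z-z_i)^{j}} \ :\ 1\le i\le d,\ 1\le j\le r_i\Bigr\},
$$
whose dimension over $\mathbb C(z)$ is $r := r_1+\dots+r_d = \deg P$. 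Wait — that bound gives order $\le \deg P$, not $\deg R+1$; the sharpening comes from the fact that $Q^{(n)}$ actually lies in the smaller space generated only by the shifted polar parts that are genuinely present, which for a fixed $n$ has dimension equal to $1+\deg R$ after one accounts for the relation coming from $\deg R < \deg P$. I would make this precise as follows.

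First I would set $y = Q^{(n)}$ and compute $y, y', \dots, y^{(\deg R + 1)}$: each derivative again lies in $W$, but more importantly in the $\mathbb C(z)$-span of $\{(z-z_i)^{-j-n} : 1\le i\le d,\ 1\le j\le r_i\}$ with the coefficients being explicit rational functions of $n$ and $z$ (products of Pochhammer-type factors and powers of $(z-z_i)$). The key point is that $Q^{(n)} = \alpha_n R_n/(P P_0^n)$ with $\deg R_n \le n(d-1)+r-1$ by \eqref{eq:firsteq}, so after clearing the common denominator $P P_0^{\,n}$ one sees that $P P_0^{\,n} Q^{(n)}$ is a polynomial of degree at most $n(d-1)+r-1$; differentiating $k$ times and multiplying by the appropriate power of $P_0$ keeps one inside a $\mathbb C(n,z)$-module of rank $\deg R+1$. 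Concretely: the functions $P P_0^{\,n}\, \mathrm d^k(Q^{(n)})/\mathrm dz^k$, for $k = 0,1,\dots,\deg R+1$, are $\deg R+2$ polynomials (with coefficients rational in $n$) all lying in a $\mathbb C(n)$-vector space of polynomials of dimension $\deg R+1$ — the dimension count coming from the numerator degree bound together with the prescribed factorization by $P$. Hence they are $\mathbb C(n,z)$-linearly dependent, and expressing that dependence via a Wronskian-type determinant $\Delta(n,z)$ (a $(\deg R+2)\times(\deg R+2)$ determinant whose entries are polynomials in $z$ with coefficients in $\mathbb C(n)$) yields, after clearing denominators in $n$, an operator $P(n,z,\mathrm d/\mathrm dz)$ of order at most $\deg R+1$ annihilating $Q^{(n)}$.

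The step I expect to be the main obstacle is getting the order bound sharp — $\deg R + 1$ rather than the easy $\deg P$. The easy argument gives only $\deg P$ because $W$ has $\mathbb C(z)$-dimension $\deg P$. To squeeze it down to $\deg R+1$ one must exploit that not all of $W$ is hit: the numerator $R_n$ of $Q^{(n)}$ has degree $\le n(d-1)+r-1$, and this one-lower-than-naive bound, propagated through differentiation, cuts the rank of the relevant module by exactly $r - 1 - \deg R = \deg P - 1 - \deg R$ units, leaving rank $\deg R+1$. Making this reduction rigorous requires a careful bookkeeping of leading terms — essentially re-running the change of variables $y=1/z$ from the proof of Lemma \ref{lemma:degrn} to control the order of vanishing at $\infty$ of all the derivatives simultaneously, and checking that the "extra" relations survive for generic (hence, by polynomiality in $n$, for all large) $n$. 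I would handle the small number of exceptional $n$ separately, noting that one may always multiply the operator by a polynomial in $n$ to absorb them, which does not change the order. The remaining verification — that the Wronskian determinant is not identically zero, so that the dependence is nontrivial — follows from the linear independence over $\mathbb C(z)$ of the distinct polar blocks $(z-z_i)^{-j}$.
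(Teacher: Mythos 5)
Your proposal takes a genuinely different route from the paper, and the route has a gap that cannot be closed. The paper's proof is a one-line Weyl-algebra computation: since $PQ=R$ is a polynomial of degree $\deg R$, one has $D^{\,n+\deg R+1}(PQ)=0$ for every $n$, and commuting the power of $D$ past multiplication by $P$ (repeated use of $Dz=zD+1$, i.e.\ Leibniz) turns this into $\sum_i\binom{n+\deg R+1}{i}P^{(i)}(z)\,Q^{(n+\deg R+1-i)}=0$, an equation for $Q^{(n)}$ whose coefficients are visibly polynomial in $n$ and $z$. Your dimension-count/Wronskian scheme has two concrete defects. First, linear algebra ``over $\mathbb{C}(z)$'' is vacuous here: every function in sight lies in the field $\mathbb{C}(z)$, so every $\mathbb{C}(z)$-span is at most one-dimensional, and a single rational function trivially satisfies a first-order equation over $\mathbb{C}(z)$. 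The actual content of the proposition is uniformity in $n$ --- one operator with coefficients polynomial in $n$ annihilating the whole family --- and your span argument never engages with that. Second, the objects in your sharpened count are not what you claim: $PP_0^{\,n}\,\mathrm{d}^kQ^{(n)}/\mathrm{d}z^k=\alpha_{n+k}R_{n+k}/P_0^{\,k}$ is not a polynomial for $k\ge 1$, and after clearing the extra factor $P_0^{\,k}$ the resulting polynomials have degree of order $n(d-1)$, so they cannot all lie in a space of dimension $\deg R+1$ independent of $n$.

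More seriously, the step you yourself flag as the main obstacle --- improving the easy bound $\deg P$ to $\deg R+1$ --- is impossible in general, so no bookkeeping will rescue it. Take $Q=1/((z-z_1)(z-z_2))$, so $\deg R+1=1$. Then $Q^{(n)}=c_nW_n/P^{\,n+1}$ with $\deg W_n=n$, $\gcd(W_n,P)=1$ and $\gcd(W_n,W_{n+1})=1$; any relation $A(n,z)Q^{(n+1)}+B(n,z)Q^{(n)}=0$ with $A,B$ polynomial in $(n,z)$ forces $W_n\mid A(n,\cdot)$ for every large $n$, hence $A\equiv B\equiv 0$: there is no nontrivial first-order family. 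What the Leibniz identity honestly yields (take $D^{\,n+k}(PQ)=0$ with $k=\max(\deg P,\deg R+1)$) is an operator of order at most $\max(\deg P,\deg R+1)$; this equals $\deg R+1$ precisely when $\deg P\le\deg R+1$, which is the generic situation the paper has in mind (no polynomial part and $\deg R=\deg P-1$), and the paper's own wording also elides this hypothesis. Note that your opening reduction to $\deg R<\deg P$ pushes you straight into the problematic regime. The fix is to abandon the sharpening, use the commutation identity, and state the order bound as $\max(\deg P,\deg R+1)$ (or assume $\deg P\le\deg R+1$).
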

\begin{proof} Let $D$ denote $\diff/\diff z$, considered as a differential operator. A polynomial $P(z)$ can also be thought of as such an operator, acting by multiplication. The algebra of operators on polynomials, that $D$ and $z$ generate,  is the Weyl algebra $A_1$ (see \cite{Couth}). If $P$ is of degree $d$, then as operators in $A_1$, 
\begin{equation}
\label{eq:commuting}
D^nP(z)=\sum_{i=0}^{d+1}P_i(n, z)D^{n-i},
\end{equation}
for some polynomials $P_i$ in $n$ and $z$. This follows by induction on the degree of $P$, using the relation $Dz=zD+1$.

If $\deg R=d$, then $(D^nD^{d+1}P)*Q=0$ for all $n\geq 0$ (where $*$ signifies action of an operator on a function). Hence, using  (\ref{eq:commuting})
$$
\sum_{i=0}^{d+1}P_i(n+d+1, z)D^{n+d+1-i}*Q=0,$$
which means that $Q^{(n)}$ satisfies the parameter dependent differential equation
$$
\sum_{i=0}^{d+1}P_i(n+d+1, z)\frac{\diff^{d+1-i}Q^{(n)}}{\diff z^{d+1-i}}=0.
$$
\end{proof}

Using the fact that $P^nQ^{(n)}=R_n$, and a result similar to (\ref{eq:commuting}) that allows commuting high powers of $P$ with powers of $D$, it is an easy, but tedious computation to see that there is a differential equation for $R_n$ as well. We content ourselves with an easy example.

\begin{example}
\label{ex:counterexample}{\rm The following incidentally gives a counterexample to Conjecture 1 of \cite{ShSo}, described below. Let $$
Q=(z-z_1)^{-1}+(z-z_2)^{-1}=\frac{2z-z_1-z_2}{(z-z_1)(z-z_2)}.
$$ 

Then
$$
\frac{(-1)^n}{n!}Q^{(n)}=(z-z_1)^{-1-n}+(z-z_2)^{-1-n}=\frac{(z-z_1)^n+(z-z_2)^n}{(z-z_1)^n(z-z_2)^n}.
$$ 

The differential equations for $Q^{(n)}$ and $R_n$ are, respectively,
$$
\frac{(z-z_1)(z-z_2)}{(n+1)(n+2)}{Q^{(n+2)}}+\frac{2z-z_1-z_2}{n+1}{Q^{(n+1)}}+Q^{(n)}=0
$$
and
\begin{equation}
\label{eq:exampleRn}
\frac{(z-z_1)(z-z_2)}{(n+1)(n+2)}R''_n-\frac{2z-z_1-z_2}{n+1}R'_n+R_n=0.
\end{equation}

We can formally recover the algebraic equation for the Cauchy transform of the asymptotic measure from equation (\ref{eq:exampleRn}). 
Divide the equation by $n^2R_n$, let $n\to \infty$ and assume that $C:=\lim_{n\to\infty} \frac{R'_n}{nR_n}$, so that $C$ is the Cauchy transform of the asymptotic zero counting measure of $R_n$. If we further assume that $C^2=\lim_{n\to\infty} \frac{R''_n}{n^2R_n}$ (for a motivation for this in a similar situation, see
\cite{BoSh1}) we get the equation
$$
(z-z_1)(z-z_2)C^2-(2z-z_1-z_2)C+1=0.
$$
This is, of course, the equation $((z-z_1)C-1)((z-z_2)C-1)=0$, which we already established in Corollary \ref{cor:2}.

By Proposition \ref{lemma:zeroes2} there is in general no compact set that contains all the zeroes of all $R_n$.
In \cite{BoSh1} a condition is given for quasi exactly solvable parameter dependent differential equations, such as the one for $R_n$, to have polynomial solutions whose  zeroes are uniformly bounded. This condition is formulated in terms of the zeroes of an associated characteristic polynomial: these zeroes should all have different arguments.
The characteristic polynomial of (\ref{eq:exampleRn}) is $\lambda^2-2\lambda+1$ which has a double zero at $\lambda=1$. 
Hence this example gives an instance of the necessity of the condition for uniform boundedness given in \cite{BoSh1}, and in addition a counterexample to Conjecture 1 of \cite{ShSo},  which suggested that the condition in \cite{BoSh1} would be unnecessary.}
\end{example}


\section{Voronoi diagrams and P\'olya's theorem in higher dimensions}

It is tempting to try to extend the above results to higher dimensions. Without striving for generality, we will now sketch in a simple example how one could do this.  Let $W_i\subset \bC^m,\ i=1,\dotsc,d$ be a hyperplane arrangement; for simplicity of proof we will assume that 
\begin{equation}
\label{eq:generichyperplane}
W_i\cap\ W_j\neq \emptyset, \ i,j=1,\dotsc,d.
\end{equation}
For instance, this is true for all generic hyperplane arrangements, see \cite{Oda}. But for $m=1$, the only such arrangements consist of just one point, so the hypothesis is quite restrictive.

Each $W_i$ is defined by a polynomial 
 $$
 L_i(z)=\sum_{i=1}^ma_{ij}z_j+ b_i=\mathbf{a_i}\cdot z+b_i=0,\  z=(z_1,\dotsc,z_m)\in  \bC^m.$$
We make a preliminary assumption that $\vert \mathbf{a_i}\vert =1,\ i=1,\dotsc,d$, and as a consequence,
 the distance from $z$ to $W_i$ is given by $\vert L_i(z)\vert$.  Note that $\mathbf{a_i}$ is only determined up to multiplication by an element in $S^1\subset \mathbb C$, and that by (\ref{eq:generichyperplane}) all $\mathbf{a_i}$ are distinct. 
 
There is an obvious  Voronoi stratification of $\mathbb C^m$, with cells $V_i$ defined as those $z$ for which $W_i$ is the closest hyperplane, or $\min\{ \vert L_j(z)\vert,\ j=1,\dotsc,d\}=\vert L_i(z)\vert$. The codimension 1 skeleton $\Vor_{_W}^1$ consists of the union of certain closed sets of real dimension $2m-1$, subsets of $\vert L_i(z)\vert=\vert L_j(z)\vert,\ i\neq j$. As for $m=1$,
this skeleton is easily seen to be the locus of non-differentiability of the plurisubharmonic, piecewise harmonic and continous function 
$$
\Psi(z):=\smash{\displaystyle\max_{i=1,\dotsc,d}} \{\log( |L_i(z)|^{-1})\}+\log\vert P(z)\vert,
$$ 
where $P=L_1\dotsm L_d$.  

Now let us return to P\'olya's theorem. If $m=1$, $a_i=1$, and $f(z):=\sum_{i=1}^d A_i(z+b_i)^{-1},\ 0\neq A_i\in\mathbb C$, we have that
$$
f^{(n)}(z)=(-1)^nn!\sum_{i=1}^d A_i(z+b_i)^{-1-n}.
$$
Hence the zeroes of $f^{(n)}(z)$, whose asymptotic behaviour is described by P\'olya's theorem,  coincide with the zeroes of $\sum_{i=1}^d A_i(z+b_i)^{-1-n}.$ For $m>1$ this suggests one should naively consider the sequence of rational functions, $L_1^{-n}+\dotsb+L_d^{-n}$, or rather its numerator $P^n(L_1^{-n}+\dotsb+L_d^{-n})$. 

The hypersurface $Z_n:=Z(P^n(L_1^{-n}+\dotsb+L_d^{-n}))$ has an associated Euler-Poincar\'e current 
$$E_n=dd^c (\log\vert P^n(L_1^{-n}+\dotsb+L_d^{-n})\vert)$$ (see Demailly's book \cite{De}), which is the appropriate generalization of the zero-counting measure (up to a multiplicative constant). 
There is also the current $E=dd^c \Psi(z) $, with support on the the codimension 1 skeleton of the Voronoi diagram. It may be explicitly computed in a similar way as for $n=1$ above. (A sample computation is given in the example below.) 
Note that
\begin{equation}
\label{eq:uniformconv}
\Psi_n:=\frac{1}{n}\log \vert P^n( L_1^{-n}+\dotsb+L_d^{-n})\vert \to- \log\vert L_i \vert + \log\vert P\vert=\Psi
\end{equation}

in the interior of the cell that contains $W_i, \ i=1,\dotsc,d$, uniformly on compact subsets. In particular $\Psi_n$ converges to $\Psi$ pointwise a.e.
\begin{theorem}
\label{theorem:currents} Assume that $L_i(z),\ i=1,\dotsc,d$, define a generic hyperplane arrangement. Then $\frac{1}{n}E_n\to E$ as currents.
\end{theorem}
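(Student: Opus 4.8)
The plan is to mimic the one-dimensional argument of Theorem \ref{th:voronoi}(ii), replacing the logarithmic potential by the plurisubharmonic potential $\Psi$ and the measure-theoretic convergence by convergence of currents. The key observation is the operator identity $E_n = dd^c(\log|P^n(L_1^{-n}+\dotsb+L_d^{-n})|)$ and $E = dd^c\Psi$, so that since $dd^c$ is continuous in the sense of currents on the cone of plurisubharmonic functions, it suffices to prove that $\frac1n E_n \to E$ by showing $\Psi_n \to \Psi$ in $L^1_{loc}(\bC^m)$. Indeed, if $\Psi_n \to \Psi$ in $L^1_{loc}$, then $dd^c\Psi_n \to dd^c\Psi$ as currents, and $\frac1n E_n = dd^c\Psi_n$. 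So the whole theorem reduces to one $L^1_{loc}$-convergence statement, exactly as in the planar case.

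First I would record that, by (\ref{eq:uniformconv}), $\Psi_n \to \Psi$ uniformly on compact subsets of each open Voronoi cell $V_i^o$, hence pointwise a.e. on $\bC^m$; this already pins down the only possible limit. The remaining work is to upgrade a.e.\ convergence to $L^1_{loc}$-convergence, and the genericity hypothesis (\ref{eq:generichyperplane}) is what makes this painless: because every pair of hyperplanes meets, each unbounded Voronoi cell is a genuine cone-like region and the skeleton $\Vor_{_W}^1$ has locally finite $(2m-1)$-volume, while $\Psi$ and each $\Psi_n$ are locally integrable (they are plurisubharmonic, and $P^n(L_1^{-n}+\dotsb+L_d^{-n})$ is a nonzero rational function so $\log$ of its modulus is in $L^1_{loc}$). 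To get the uniform-in-$n$ $L^1$ bound near the skeleton I would split, as in Section \ref{section:proof}, the potential $\Psi_n$ into the contribution of the ``large'' zeroes and poles (those outside a fixed large polydisk) and the ``nearby'' ones. The pole contribution is harmless since $P$ is fixed. For the zero contribution one needs a crude growth bound on the zero set $Z_n$ of the numerator, analogous to Lemma \ref{lemma:growth}; here $Z_n$ is a hypersurface rather than a finite set, so the bound should be phrased in terms of the mass $\|\frac1n E_n\|$ on a polydisk, which is controlled by a Lelong–Jensen type formula: $\|\frac1n E_n\|(D_R) = \frac{1}{n\cdot \text{vol}}\int_{\partial D_R}\log|P^n(L_1^{-n}+\dotsb+L_d^{-n})|\,d\sigma - (\text{term at }0)$, and the boundary integrand is, by (\ref{eq:uniformconv}) applied on $\partial D_R$ (which for suitable $R$ misses the skeleton), uniformly bounded. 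This gives a uniform mass bound, and then an elementary potential-theoretic estimate (the $dd^c$-potential of a positive $(1,1)$-current of bounded mass is uniformly $L^1$ on a compact set, since $\log|z-w|$ type kernels in $\bC^m$ are locally integrable uniformly in $w$) yields the uniform $L^1_{loc}$ bound on $\Psi_n$. Combining the a.e.\ convergence with this uniform integrability (via, e.g., an Egorov/equi-integrability argument as in \ref{sec:3integrals}) gives $\Psi_n \to \Psi$ in $L^1_{loc}$, hence the theorem.

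The main obstacle I expect is the higher-dimensional analogue of the growth-of-zeroes estimate: in the planar case one uses that $R_n(0) = \prod(\text{zeroes})$ has controlled growth, but in $\bC^m$ the zero set is a hypersurface and one must instead bound its Lelong mass, which requires invoking a Lelong–Jensen or Poincaré–Lelong formula and checking that the relevant sphere or torus can be chosen to avoid the Voronoi skeleton uniformly in $n$. The genericity assumption (\ref{eq:generichyperplane}) should guarantee, exactly as after (*) in Section \ref{section:growth}, that there is a large polydisk or ball entirely inside one open Voronoi cell after an affine translation, on which (\ref{eq:uniformconv}) gives uniform control; this is the ingredient that makes the mass bound go through. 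Everything else — the continuity of $dd^c$ on plurisubharmonic functions, local integrability of $\Psi$, identification of $E = dd^c\Psi$ with the current supported on $\Vor_{_W}^1$ — is standard pluripotential theory (Demailly \cite{De}) and requires no new idea beyond the planar case.
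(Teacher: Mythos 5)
Your overall reduction (identify $\tfrac1n E_n=dd^c\Psi_n$, $E=dd^c\Psi$, and prove $\Psi_n\to\Psi$ in $L^1_{loc}$ using the uniform convergence (\ref{eq:uniformconv}) on compact subsets of open cells) is exactly the frame of the paper's proof, but your mechanism for the uniform integrability near the skeleton is different and, as written, has two genuine gaps. First, the claim that a sphere $\partial D_R$ can be chosen ``for suitable $R$'' to miss the Voronoi skeleton is false: under hypothesis (\ref{eq:generichyperplane}) every cell $V_i$ contains the unbounded hyperplane $W_i$, so every large sphere meets points of several distinct cells and therefore must cross $\Vor_{_W}^1$; consequently you cannot invoke (\ref{eq:uniformconv}) to bound the boundary integrand in your Lelong--Jensen estimate. (What is actually available, and what you would need to say, is the trivial \emph{upper} bound $\Psi_n\le \max_i\log|P/L_i|+\tfrac{\log d}{n}$, locally uniform since each $P/L_i$ is a polynomial, together with a lower normalization $\Psi_n(x_0)\to\Psi(x_0)>-\infty$ at a point $x_0$ interior to a cell.) Second, the step from a uniform mass bound on $\tfrac1n E_n$ to a uniform $L^1$ bound on $\Psi_n$ is not automatic: $\Psi_n$ differs from the canonical potential of its $dd^c$ by a pluriharmonic function which must be controlled. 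The standard repair is the compactness theorem for plurisubharmonic functions (a sequence locally uniformly bounded above either tends to $-\infty$ locally uniformly or has a subsequence converging in $L^1_{loc}$); with the normalization at $x_0$ and the a.e.\ convergence to $\Psi$ this yields $\Psi_n\to\Psi$ in $L^1_{loc}$ and would make your route work, but neither ingredient is supplied in your write-up, and your appeal to ``Egorov/equi-integrability'' does not substitute for it.

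For comparison, the paper avoids pluripotential compactness altogether and instead reduces to the one-variable computation by slicing: genericity is used to choose a direction $\mathbf b$ with all $|\mathbf{a_i}\cdot\mathbf b|$ nonzero and distinct, so that after a linear change of coordinates the fibres of the projection $\rho(z_1,\dotsc,z_m)=(0,z_2,\dotsc,z_m)$ are complex lines transversal to all $W_i$ on which the restricted numerator is a polynomial in $z_1$ of degree $n(d-1)$ whose roots lie in a disk of radius $R$ bounded uniformly for base points in a compact set (this is (\ref{eq:cpct}), the true higher-dimensional analogue of Lemma \ref{lemma:growth}, and it is precisely where (\ref{eq:generichyperplane}) enters); Fubini then lets one repeat the elementary integral estimates of \ref{sec:3integrals} fibre by fibre. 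Your approach, once the two gaps above are filled, is arguably more robust (it does not need the fibrewise root-location estimate and extends beyond arrangements satisfying (\ref{eq:generichyperplane})), but it trades the paper's elementary estimates for the machinery of psh compactness; in its present form it is not yet a complete proof.
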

\begin{proof}It is enough to check that we have $L^1$-convergence $\Psi_n \to \Psi$ in each compact $K\subset \mathbb C^n$. First we will find a coordinate system that is adapted to use of Fubini's theorem and (the proof of) Theorem 1. We claim that it is possible to find a unit vector $\mathbf{b}$, such that all $\vert \mathbf{a_i}\cdot \mathbf{b}\vert,\ i=1,\dotsc,d,$ are
non-zero and distinct. This follows from the fact that the locus of $\vert \mathbf{a_i}\cdot \mathbf{c}\vert=\vert \mathbf{a_j}\cdot \mathbf{c}\vert$ is a codimension 1 real hypersurface if  
$\mathbf{a_i}\neq \mathbf{a_j}$. The finite union $V^h$ of these hypersurfaces cannot cover $\mathbb C^m$, and any  unit vector  $\mathbf{b}$ in the open set $ \mathbb C^m\setminus V^h$ will do. Since this last set is open, we may also assume that all $\mathbf{a_i}\cdot \mathbf{b}\neq 0$. Changing the basis of $\mathbb C^m$, so that $\mathbf{b}$ is the first basis vector, the new coordinate system then satisfies that $W_i$ is transversal to the fibres of $\rho: \rho(z_1,\dotsc,z_n)=(0,z_2,\dotsc,z_n)$, and that $\vert a_{i1}\vert,\ i=1,\dotsc,d,$ in the (new) expression for $L_i$ are all distinct. As a consequence, for each fixed $k$, and all $n$,  \begin{equation}
\label{eq:cpct}
Z_n\cap \rho^{-1}(\rho(k))\subset D_R\subset \rho^{-1}(\rho(k)),
\end{equation}
 where the radius $R$ of the disk $D_R$ with center at $\rho(k)$ may be taken to depend continuously on $\rho(k)$. Hence if $k\in K$, a compact set, $R$ may be assumed to be uniformly bounded. This is seen as follows. Assume that $\vert a_{11}\vert $ is minimal among $\vert a_{i1}\vert, \ i=1,\dotsc,d$. The restriction of $L_i$ to $\rho^{-1}(\rho(k))$ is of the form
 $L_i(z)=a_{i1}z_1+p_i$, where $p_i=p_i(z_2,\dotsc,z_m)=a_{i2}z_2+\dotsb+a_{im}z_m+b_i$ only depends on $\rho(z)$. Choose $0<\delta<1$ such that $\vert a_{i1}\vert < \delta \vert a_{s1}\vert, \ s=2,\dotsc,m$.
 If $p_1,\dotsc,p_m$ are fixed there is an $R_1$, depending continuously on $p_1,\dotsc,p_m$, such that $\vert z_1\vert> R_1$ implies that $\vert a_{i1}z_1+p_i\vert < \delta \vert a_{s1}z_1+p_s\vert, \ s=2,\dotsc,m$. Now choose an $N$ such that $n>N$ implies that
 $(d-1)\delta^n<1/2$.
Then
 $$
 \vert P(z)^n(L_1(z)^{-n}+\dotsb+L_d(z)^{-n})\vert \geq \frac{\vert P(z)\vert^n}{\vert L_1(z)\vert^{n}}(1-(d-1)\delta^n)\geq (1/2)\frac{\vert P(z)\vert^n}{\vert L_1(z)\vert^{n}}.
 $$
If the left-hand side of this inequality is zero, then $P(z)=0\iff z=-p_i/a_{i1}$ for some $i$. Hence (\ref{eq:cpct}) holds with 
$R= \max\{R_1, \vert p_i/a_{i1}\vert,\ i=1,\dotsc,d\}$ and $n>N$. To obtain (\ref{eq:cpct}) was the reason we assumed (\ref{eq:generichyperplane}).

Now we proceed to the proof proper. Fix $\epsilon>0$. Let $U$ be an open set that does not intersect the Voronoi skeleton $\Vor_L^1$, and such that $ \bar U $ is compact. By the uniform convergence (\ref{eq:uniformconv}), in compact subsets of open cells, there is an $N_1$ such that $n\geq N_1$ implies that
$\vert \Psi_n(z)-\Psi(z)\vert\leq \epsilon$ if $z\in U$. This is possible since the fibres of $\rho$ are transversal to all the hyperplanes $W_i$. Hence 
\begin{equation}
\label{eq: integral1}
I_1:=\int_{U}\vert \Psi_n(z)-\Psi(z)\vert \mathrm{d}\lambda \leq \lambda(U)\epsilon=O(\epsilon).
\end{equation}

Choose the open set 
$U\subset \bar U\subset K\setminus V^1_L$, such that for each fibre:
$\lambda_{1}(\rho^{-1}\rho(k)\cap K \setminus U)\leq \epsilon$ for all $k\in K$, where $\lambda_1$ is Lebesgue measure on $\mathbb C$. Let $N_1$ and $R$ be such that (\ref{eq:cpct}) also holds for $n>N_1$ and $k\in K$. Now 
$$
\int_{K\setminus U}\vert\Psi_n-\Psi\vert \diff \lambda\leq \int_{K\setminus U}\vert\Psi\vert \diff \lambda+\int_{K\setminus U}\vert\Psi_n\vert \diff \lambda=:I_2+I_3.
$$
Clearly by Fubini's theorem,
\begin{equation}
\label{eq:integral2}
I_2=\int_{K\setminus U}\vert\Psi\vert \diff \lambda = \int_{\rho(K)}\left(\int_{\rho^{-1}\rho(k)\cap (K\setminus U)}\vert\Psi\vert \diff\lambda_1\right)\diff\lambda_{m-1}\leq \epsilon \lambda_{m-1}(\rho(K)) M=O(\epsilon),
\end{equation}
where $M$ is the maximum value of the continous function $\Psi$ on $K$, and $\lambda_1$ and $\lambda_{m-1}$ are Lebesgue measure on $\bC$ and $\bC^{m-1},$ respectively. 

Finally, we apply Fubini again, on the integral $I_3$:
\begin{equation}
\label{eq:integral3}
I_3=\int_{K\setminus U}\vert\Psi_n\vert \diff \lambda=\int_{\rho(K)}\left(\int_{\rho^{-1}\rho(k)\cap (K\setminus U)}\vert\Psi_n\vert \diff\lambda_1\right)\diff\lambda_{m-1}.
\end{equation}
Note that for the restriction to the fiber in the inner integral 
$$R_n(z):=P(z)^n(L_1(z)^{-n}+\dotsb+L_d(z)^{-n})=A(n)z_1^{(d-1)n}+\text{ lower terms in } z_1,$$ where 
$$A(n)=\sum_{i=1}^d \frac{(a_1\cdots a_d)^n}{a_i^n}\sim \frac{(a_1\cdots a_d)^n}{a_1^n},$$
as $n\to\infty$. Hence 
$$\Psi_n(z)=(1/n)\left(\log\vert A(n)\vert+\sum_{j=1}^{n(d-1)}\log\vert z_1-\alpha_j\vert\right), $$
denoting by $\alpha_j,\ j=1,\dotsc,n(d-1)$ the roots of $R_n(z)$ on the fibre.
Each of the terms in this sum contributes to the inner integral in (\ref{eq:integral3}). Let $K\subset B_S(0)$, the ball with radius $S$ centered at the origin . By  (\ref{eq:cpct}),  $\vert \alpha_j\vert \leq R$, so that $\vert z_1-\alpha_j\vert \leq R+S$. Hence, uniformly in $\rho(k), \ k\in K$,
\begin{eqnarray*}
&& \int_{\rho^{-1}\rho(k)\cap(K\setminus U)}(1/n)\log\vert z_1-\alpha_j\vert\diff\lambda_1\\
&&=(1/n)(-(\log\epsilon-1/2)(\epsilon^2/2)+
\max\{-\log\epsilon,\log(R+S)\}\epsilon)=o(1).
\end{eqnarray*}
Thus $I_3=o(1),$ if $n$ is large enough. Adding the estimates for the three integrals $I_1,I_2$ and $I_3$ concludes the proof.

 \end{proof}

\begin{example} Consider $Q_n=z_1^{-n}-z_2^{-n}$. The set $V_{12}=\{ (z_1,z_2) : \vert z_1\vert=\vert z_2\vert\} $ is the cone over the torus $S^1\times S^1\subset \mathbb C^2$. It divides $\mathbb C^2$ into two cells. $Z_n=Z(z_1^{-n}-z_2^{-n})$ is the union of the lines $L(\epsilon_n^j)\ :\ z_1=\epsilon_n^jz_2, \ j=0,\dotsc,n-1$, where $\epsilon_n$ is a primitive $n$'th root of unity. As $n$ becomes large, it is easily seen that these lines fill out $V_{12}$.  Namely, if $(z_1,z_2)\in V_{12}$  then this point belongs to a line $L(e^{i\theta})\ :\ z_1=e^{i\theta}z_2,\ \theta \in \mathbb R$, and we can find an $\epsilon_n^j$ that is arbitrarily close to $e^{i\theta}$. Hence the line $z_1=\epsilon_n^jz_2$ will approximate the line $z_1=e^{i\theta}z_2$ (and in a compact region of $\mathbb C^2$ this approximation is uniform). 

By definition $E$ and $E_n$ acting on a $(1,1)$ test form $\alpha$ are given by 
$$E=\frac{1}{2\pi}\int_0^{2\pi}\left(\int_{ L(e^{i\theta}) }\alpha\right)\diff\theta, \hskip 1cm E_n=(1/n)\sum_{i=0}^{n-1}\int_{ L\left(\epsilon_n^j\right) }\alpha.$$

 A very similar example to $E_n\to E$ was considered by Demailly \cite{Dem82}. The current $E$ is an example of a tropical current, as defined in \cite{Babaee}. However most of the cases covered by the theorem are not tropical. 

\end{example}

\section{Some variants and further problems in complex dimension 1}
\subsection{Lemniscates}  
We will sketch another example, in the same circle of ideas as the previous, but which is not covered by P\'olya's theorem.

\begin{figure}
\begin{center}
\includegraphics[scale=0.47]{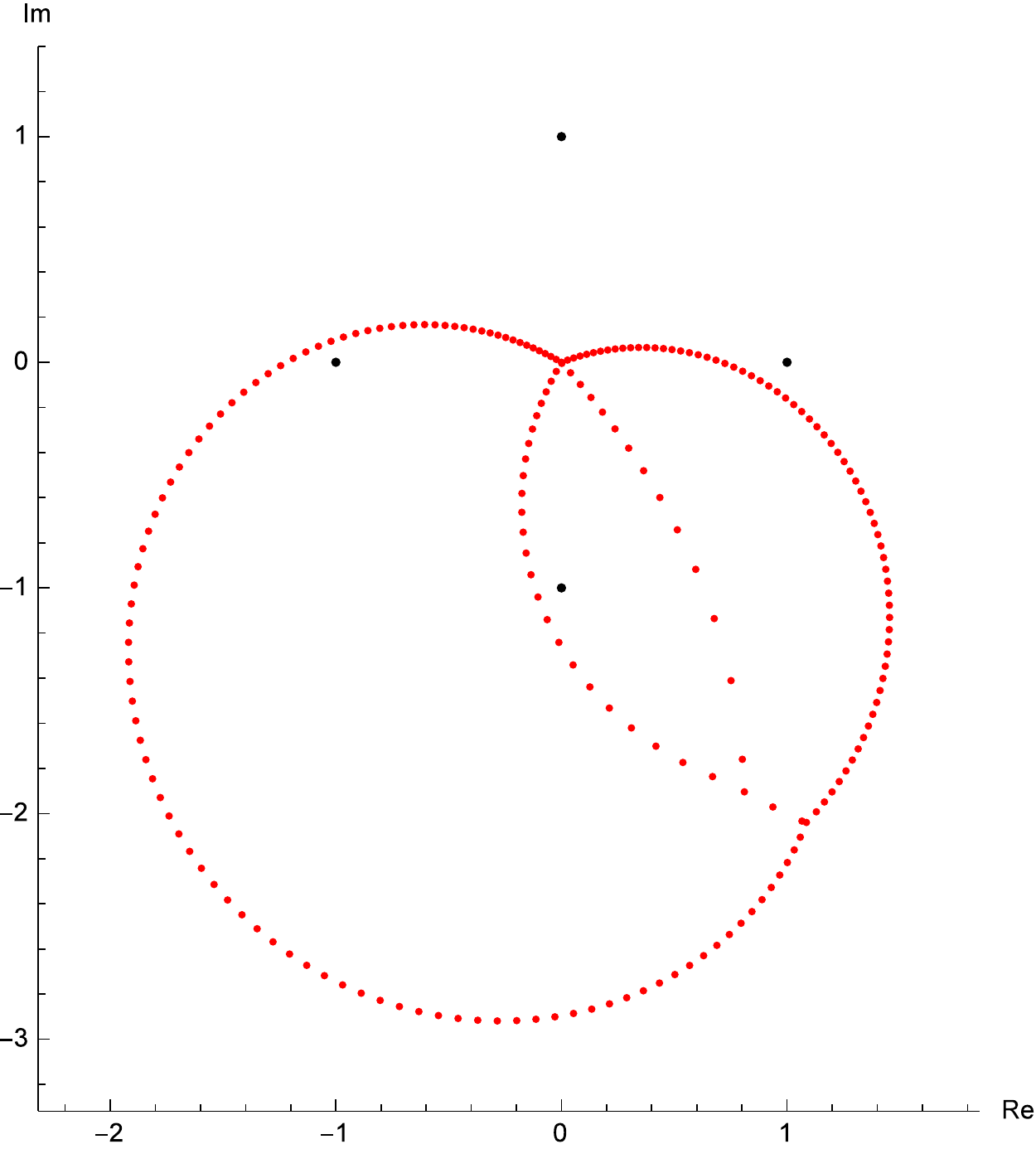}\quad \quad \includegraphics [scale=0.47]{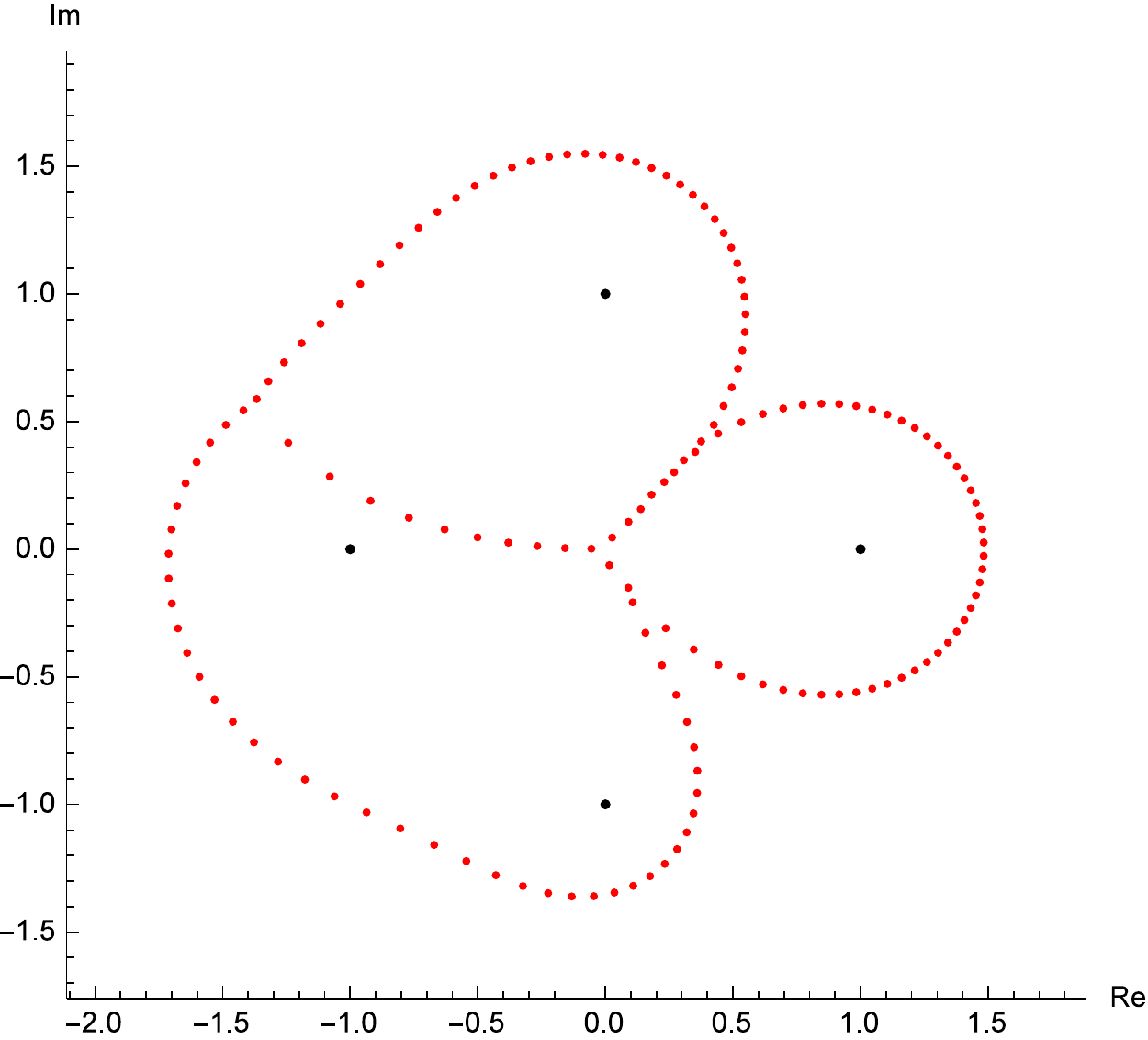} 
\end{center}
\caption{The zeroes of $(z-1)^{12n}+(z+1)^{8n}+(z-i)^{7n}+(z+i)^{21n}$ (left), and $(z-1)^{-5n}+(z+1)^{4n}+(z-i)^{-4n}+(z+i)^{-2n}$ (right), where $n=10$.}
\label{fig2}
\end{figure}
Given a finite number of entire functions $f_1,\dotsc,f_d$,  define $\Psi(z):=\max \{ \vert f_i\vert,i=1,\dotsc,d\}$. Then there is a Voronoi-type stratification with open cells of the form $V_i=\{z : \Psi(z)=\vert f_i(z)\vert\}$, with boundaries that are unions of subsets of the sets $\{z: |f_i|=|f_j|\},\,i\neq j$. Let $R_n(z)=f_1(z)^n+\dotsb+f_d(z)^n$. In the interior of $V_i$, $|f_i(z)|>|f_j(z)|$, and hence
$$
\lim_{n\to\infty}\frac{1}{n} \log\vert R_n(z)\vert = \log \vert f_i(z)\vert = \Psi(z).
$$
As a consequence this sequence is  pointwise convergent a.e.

 Now assume that $f_i(z)=P_i(z),\ i=1,\dotsc,d,$ are monic polynomials.
\begin{proposition}
\label{prop:zeroConv} Let $\mu_n$ be the zero-counting probability measure associated with $R_n=\sum_{i=1}^d P_i(z)^n$.
Then $L(\mu_n)\to \Psi$ pointwise a.e., where $$\Psi:=\max_{i=1,\dotsc,d}\{\log \vert P_i\vert\}.$$
 If $d_i:=\deg P_i$, and $d_1>d_i,\ i\neq 1,$ then the Voronoi diagram is compact, and this convergence is $L^1_{loc}$.
\end{proposition}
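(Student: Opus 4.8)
The plan is to read Proposition~\ref{prop:zeroConv} as a lighter avatar of Theorem~\ref{th:voronoi}(ii): under the hypothesis $d_1>d_i$ ($i\neq1$) the zero sets of the $R_n$ turn out to be \emph{uniformly} bounded, so one obtains $L^1_{loc}$-convergence by the estimates of \S\ref{section:proof} while entirely bypassing the growth bound of Lemma~\ref{lemma:growth}. The pointwise a.e.\ convergence is, as noted just above the proposition, essentially immediate: in an open cell $V_i^o$ one has $|P_i|>|P_j|$ for all $j\neq i$, which (since $d\geq2$) forces $P_i$ to be zero-free there, so
\[
\tfrac1n\log|R_n(z)|=\log|P_i(z)|+\tfrac1n\log\Bigl|1+\sum_{j\neq i}\bigl(P_j(z)/P_i(z)\bigr)^n\Bigr|
\]
converges to $\Psi(z)$ uniformly on compact subsets of $V_i^o$. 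Since $\deg R_n$ is linear in $n$ with controlled leading coefficient, $L(\mu_n)$ is a fixed multiple of $\tfrac1n\log|R_n|$ up to an additive null sequence of constants, so it too converges uniformly on compact subsets of each cell, hence a.e. So the two assertions genuinely left to prove are compactness of the Voronoi diagram and the $L^1_{loc}$-statement.

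For compactness: as $|z|\to\infty$, $|P_1(z)|\asymp|z|^{d_1}$ strictly dominates each $|P_i(z)|\asymp|z|^{d_i}$, so there is $R_0$ with $\{|z|\geq R_0\}\subset V_1^o$. Hence $V_1$ is the unique unbounded cell, every other cell lies in $\{|z|<R_0\}$, and the cell boundaries — the Voronoi diagram — lie in $\{|z|\leq R_0\}$, so they form a compact set. The key refinement is to enlarge $R_0$ so that, in addition, $P_1$ is zero-free on $\{|z|\geq R_0\}$ and $\sum_{i\neq1}|P_i(z)/P_1(z)|<1$ there (each ratio tends to $0$). Then for such $z$ and every $n\geq1$,
\[
|R_n(z)|=|P_1(z)|^n\Bigl|1+\sum_{i\neq1}\bigl(P_i(z)/P_1(z)\bigr)^n\Bigr|\;\geq\;|P_1(z)|^n\Bigl(1-\sum_{i\neq1}|P_i(z)/P_1(z)|\Bigr)>0,
\]
so every zero of every $R_n$ lies in $\{|z|<R_0\}$.

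With the zeros trapped in a fixed disk, I would then reproduce the bookkeeping of \S\ref{section:proof}. Write $L(\mu_n)(z)=N_n^{-1}\sum_{j=1}^{N_n}\log|z-\zeta_j^{(n)}|$ with $N_n=\deg R_n$ and all $\zeta_j^{(n)}\in\{|z|<R_0\}$, and fix $R>R_0$, $D_R=\{|z|\leq R\}$. Given $\epsilon>0$, pick an open $U$ with $\overline U\subset D_R$ disjoint from the Voronoi diagram, with $\lambda(D_R\setminus U)<\epsilon$ and also (using that $\Psi$ is $L^1_{loc}$) $\int_{D_R\setminus U}|\Psi|\,\mathrm{d}\lambda<\epsilon$. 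On $U$ the uniform convergence above gives $\int_U|L(\mu_n)-\Psi|\,\mathrm{d}\lambda\leq\lambda(D_R)\,\epsilon$ for $n$ large. On $D_R\setminus U$ the $\Psi$-part is $<\epsilon$, while for each $j$,
\[
\int_{D_R\setminus U}\bigl|\log|z-\zeta_j^{(n)}|\bigr|\,\mathrm{d}\lambda\leq\int_{|z-\zeta_j^{(n)}|\leq\epsilon}\bigl|\log|z-\zeta_j^{(n)}|\bigr|\,\mathrm{d}\lambda+\max\{-\log\epsilon,\log(R+R_0)\}\,\lambda(D_R\setminus U)
\]
is $o(1)$ as $\epsilon\to0$, uniformly in $j$ and $n$; summing over the $N_n$ terms and dividing by $N_n$ gives $\int_{D_R\setminus U}|L(\mu_n)|\,\mathrm{d}\lambda=o(1)$. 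Adding the estimates and letting $\epsilon\to0$ yields $L(\mu_n)\to\Psi$ in $L^1(D_R)$, hence in $L^1_{loc}$.

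I do not anticipate a real obstacle: the argument is strictly easier than that of Theorem~\ref{th:voronoi} precisely because the zero sets are now uniformly bounded rather than merely of controlled growth. The one step that needs care is exactly that uniform containment of the zeros in $\{|z|<R_0\}$ — this is where the strict inequality $d_1>d_i$ enters, and it fails without it: for $R_n=P_1^n+P_2^n$ with two monic polynomials of equal degree the zeros run off to infinity (in the spirit of Proposition~\ref{lemma:zeroes2}), the Voronoi diagram is non-compact, and one only gets a.e.\ convergence. A small bookkeeping point, which I would flag but not dwell on, is the normalization $\deg R_n/n\to d_1$ together with the leading coefficient of $R_n$, which pins down the precise constant multiple of $\max_i\log|P_i|$ occurring as the limit.
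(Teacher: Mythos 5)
Your proposal is correct and follows essentially the same route as the paper: uniform convergence on compact subsets of the open cells for the pointwise a.e.\ statement, dominance of the top-degree term $z^{d_1 n}$ to trap the zeroes of all $R_n$ in a fixed disk (hence compactness of the diagram), and then the integral bookkeeping of Section~\ref{section:proof}, made easier since the growth bound of Lemma~\ref{lemma:growth} is no longer needed. Your explicit estimate $\sum_{i\neq 1}\vert P_i/P_1\vert<1$ on $\{\vert z\vert\geq R_0\}$ merely makes the paper's brief compactness remark quantitative, and the normalization issue you flag (the probability potential actually converges to $d_1^{-1}\max_i\log\vert P_i\vert$) is a real but harmless imprecision present in the paper's statement as well.
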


\begin{proof} Only the last statement needs an argument. Compactness follows by assuming that there are sequences $z_i,\ i=1,\dotsc$ and $n_i,\ i=1,\dotsc$, such that $R_{n_i}(z_i)=0$, and noting that the term $z^{d_1n}$ will dominate $R_{n_i}(z_i)$ for large $\vert z\vert$. It is furthermore easy to prove uniform convergence on compact subsets of an open Voronoi cell, and then the result follows by a similar calculation 
as in the last part of the proof of the main theorem (made easier by compactness).
\end{proof}
We expect that the convergence is still $L^1_{loc}$ without the condition on the exponents, at least in generic cases. For an illustration, see Figure ~\ref{fig2}.

\subsection{Entire functions with a finite number of zeroes} A natural extension of the preceding results is to study
$f(z)=(R/P)e^Q$ where $R,P,$ and $Q$ are polynomials, and $P$ has at least two distinct zeroes. By Hadamard's theorem functions of this type are precisely  meromorphic functions that are quotients of two entire functions of finite order, each with a finite number of zeroes.  It is immediate by induction that the number of zeroes of each $f^{(n)},\ n\geq 0,$ is finite, and hence there is a probability measure $\mu_n$ associated with $Z(f^{(n)})$. By P\'olya's theorem these zeroes cluster asymptotically along the Voronoi diagram of $P(z)$, and the precise asymptotics of $\mu_n$ seems to be given by the same measure on the Voronoi diagram as above.
More generally, leaving sequences of polynomials, one may ask for similar results for an arbitrary meromorphic function with an infinite number of zeroes. 


\begin{thebibliography}{30}

\bibitem{Babaee} F.~Babaee,  
\textit{Complex Tropical Currents, Extremality, and Approximations.} arXiv:1403.7456.

\bibitem{BR}  T.~Bergkvist and H.~Rullg\aa rd, \textit{On polynomial eigenfunctions for a class of differential operators.} Math. Res. Lett. 9 (2002).

\bibitem{BoBo} R.~B\o gvad, J.~Borcea,  \textit{Piecewise harmonic subharmonic functions and positive Cauchy transforms.} Pacific J. Math. 240 (2009), no. 2. 

\bibitem{BoSh1} R.~B\o gvad, J.~Borcea, B.~Shapiro,  
\textit{Homogenized spectral problems for exactly solvable operators: asymptotics of polynomial eigenfunctions.} Publ. Res. Inst. Math. Sci. 45 (2009).

\bibitem{BoSh} R.~B\o gvad, B.~Shapiro,  
\textit{On motherbody measures with algebraic Cauchy transform.} Enseign. Math. 62(2016), pp.117-142.

\bibitem{Couth} S. C.~Coutinho, \textit{A primer of algebraic {$D$}-modules.} Cambridge University Press, Cambridge (1995).

      
      
      
\bibitem{Dem82} J. P.~Demailly,  
\textit{Courants positifs extr\^emaux et conjecture de {H}odge.} Invent. Math. 69 (1982).



\bibitem{De} J. P.~Demailly,  
\textit{Complex analytic and differential geometry.} Available at \url{https://www-fourier.ujf-grenoble.fr/~demailly/manuscripts/agbook.pdf}

\bibitem{EbKh}
P.~Ebenfelt,  D.~Khavinson, and H.S.~Shapiro
    \textit{Two-dimensional shapes and lemniscates.} Complex analysis and dynamical systems {IV}. Part 1, Contemp. Math. 553 (2011).





\bibitem{Fu} M. {Fujiwara}, {\textit{\"Uber die obere Schranke des absoluten Betrages der Wurzeln einer algebraischen Gleichung}}, {Tohoku Math. J. 10 (1916)}, pp. {167--171}.



\bibitem{Ha} W.K.~Hayman, \textit{Meromorphic functions.} Clarendon Press, Oxford (1964).

\bibitem{HoComplex} L.~H{\"o}rmander, \textit{An introduction to complex analysis in several variables.} North-Holland Publishing Co., Amsterdam (1990).


\bibitem{Ok} A.~Okabe, B.~Boots, K.~Sugihara, and
              S.~N.~Chiu,  \textit{Spatial tessellations: concepts and applications of {V}oronoi
              diagrams.} John Wiley \& Sons, 2 ed. (2000).
              
 \bibitem{Oda} P.~Orlik and H.Terao, \textit{Arrangements of hyperplanes.} Grundlehren der Mathematischen Wissenschaften 300,
 Springer-Verlag, Berlin (1992).
             
              
              
    

\bibitem{Po1} G.~P\'olya,\textit{ \"{U}ber die {N}ullstellen sukzessiver {D}erivierten.} Math. Z., vol. 12 (1922). 

\bibitem{Po2} G.~P\'olya, \textit{On the zeroes of the derivatives of a function and its analytic character.} Bull. Amer. Math. Soc., vol. 49 (1943).
              
              \bibitem{PoRa}
S. Pouliasis and T. Ransford,
    \textit{On the harmonic measure and capacity of rational lemniscates.} Potential Anal. 44 (2016), pp. 249--261.


\bibitem{SaffTotik}E.B. Saff, V.Totik, \textit{Logarithmic potentials with external fields.} Springer-Verlag, New York (1997).

\bibitem{ShSo} B. Shapiro,  A. Solynin, \textit{Root-counting measures of Jacobi polynomials and topological types and critical geodesics of related quadratic differentials.} arXiv:1510.06003.

\bibitem{Whit} Whittaker, \textit{Interpolatory function theory.} Cambridge Tracts in Mathematics and Mathematical Physics, No. 33 (1964).

\end{thebibliography}
\end{document}